\newtheorem{algorithm}{Algorithm}
\def\lph{\alpha}
\def\bin{\mathrm{Binomial}}
\def\hyp{\mathrm{Hyp}}
\def\var{\mathrm{Var}}
\def\lf{\lfloor}
\def\rf{\rfloor}
\def\unif{\mathrm{uniform}}
\def\E{\mathcal{E}}
\def\V{\mathcal{V}}
\def\T{\mathcal{T}}
\def\ep{\epsilon}
\def\F{\mathcal{F}}
\def\nid{\noindent}
\def\tbf{\textbf}
\def\enumar{\begin{enumerate}[noitemsep,label={\arabic*.}]}
\def\enumrom{\begin{enumerate}[noitemsep,label={(\roman*)}]}
\def\enumalph{\begin{enumerate}[noitemsep,label={(\alph*)}]}
\def\enumend{\end{enumerate}}
\def\itemgo{\begin{itemize}[noitemsep]}
\def\1{\mathbf{1}}
\def\itemend{\end{itemize}}
\def\pmtx{\begin{pmatrix}}
\def\pmtrx{\end{pmatrix}}
\def\N{\mathbb{N}}
\def\R{\mathbb{R}}
\def\P{\mathcal{P}}
\def\Gma{\Gamma}
\def\gma{\gamma}
\def\eqd{\ \smash{\stackrel{\mathrm{d}}{=}}\ }
\def\Lrarrow{\Leftrightarrow}
\def\ev{\mathbb{E}}
\def\pr{\mathbb{P}}
\begin{document}

\section{Introduction}
\normalsize

This article is concerned with the structure of neutral genealogy models with finite population size. A neutral model is one in which each individual in a given generation, or at a given time, has an equal chance of reproductive success. This description applies to the two most basic population models: Galton-Watson trees, which model population growth, and the Wright-Fisher model with no selection, which models genetic inheritance. It also applies to far-reaching generalizations thereof, such as superBrownian motion and Fleming-Viot processes, provided that the rates of birth, death and replacement do not depend on the location or type of individuals. The latter restriction means that, on the other hand, something as simple as a two-type branching process generally fails to be neutral. The ``neutral'' descriptor also applies to many models of coalescence; indeed, exchangeability, which is the mathematical property that defines what it means to be neutral, is a crucial assumption in much of coalescent theory (see for example \cite{bercoal}).\\

The vast majority of existing work is focused on Markov processes with fixed transition rules. We will turn our attention to neutral models with a fixed, arbitrary initial population and sequence of (unlabelled) litter sizes (numbers of offspring of each individual), a setting that was briefly described in Section 2 of \cite{donktz}. In this setting, the multiset of litter sizes in each generation is fixed, though the assignment of litters to parents is allowed to be random. This gives a finite model which is as arbitrary as possible, subject to being neutral, and allows us to understand precisely how some basic, but key, structural properties of the model depend on its litter sizes. The first two such properties are 
\enumrom
\item \emph{takeover}: the existence of a single lineage for which the proportion of the population in generation $m$ that can trace its ancestry to any fixed individual in that lineage tends to $1$ as $m\to\infty$, and
\item \emph{fixation}: the existence of a single infinite lineage.
\enumend
The study of infinite paths is a central topic in random graphs; a few examples include the backbone in the incipient infinite cluster in two-dimensional percolation \cite{kesten}, geodesics in first-passage percolation \cite{hoffgeo}, topological ends in uniform spanning trees/forests on integer lattices \cite{pmntl} and the infinite noodle \cite{infnood}. The problem of takeover appears to have received less attention; in some sense it is a ``forward in time'' analogue of the commonly observed convergence of the set of asymptotic frequencies to a permutation of $(1,0,\dots)$ in the context of coalescent theory for exchangeable partitions of $\N$ \cite{bercoal}, and indeed it makes use of similar tools such as the coalescent time scale discussed below. In the setting of neutral genealogy models that do not go extinct in finite time, we might expect a unique infinite lineage when, in some sense, the population grows slowly enough, and similarly for takeover; this intuition is more or less correct when litter sizes are not too large.\\

We will mostly use the following three objects in our analysis:
\itemgo
\item frequency, which is the proportion of the population in generation $m$ with a given ancestor in generation $n<m$,
\item the ordered lookdown representation, which arranges the individuals in generation $n$ in size-biased order of the frequency of their descendants in generation $m>n$, and
\item the coalescent time scale, which is the time scale on which, looking backwards in time, a pair of randomly chosen individuals gains a common ancestor at rate $1$, i.e., after approximately exponentially distributed time with mean 1.
\itemend

Frequency describes the genealogical structure of the population, the lookdown organizes that structure in a nice way, and the coalescent time scale governs how quickly the size-biased ordered frequencies tend towards the takeover state $(1,0,\dots,0)$. The use of frequency in characterizing model behaviour is ubiquitous in coalescent theory \cite{bercoal}, and the coalescent time scale appears in any study of genealogy and coalescent models, since it is the natural time scale on which diffusion of frequencies occurs.\\

We will also study a third property, which is what motivated this work:
\enumrom
\item[(iii)] \emph{identifiability}: whether or not the ordering of particles given by the ordered lookdown can be inferred by examining the original (unordered) neutral model.
\enumend
The problem of identifiability has been previously studied for the spine path in size-biased Galton-Watson trees \cite{sbsurv} (see spinal decomposition below), and connects to our results as discussed below.  We are not aware of any previous work on identifiability in the context of genealogy models. To provide some motivation for identifiability, note that when there is at most one infinite lineage and the extinction times of other lineages are distinct, the lookdown can be recovered by ordering vertices according to their extinction times, as described in Theorem \ref{thm:ipasyn}. \\

\nid\tbf{Lookdown representations. }The idea of lookdown representations is to find a way of assigning dynamic (i.e., time-dependent) non-negative labels (integer- or real-valued) to the particles in a population model in such a way that the label encodes useful information about the particles' future behaviour relative to other particles. We briefly summarize our contributions to the theory; the unacquainted reader is referred to Section \ref{sec:lkdnhist} for some historical context and additional discussion.\\

In this article we use the lookdown representation that appears in \cite{donktz}, that we call the ordered lookdown (see Section \ref{sec:lkdnhist} for context and Section \ref{sec:bwdneut} for the definition). A coupling with specific conditional distributions (see Corollary \ref{cor:lkdn}) connects the lookdown to the neutral model that it represents. In \cite{donktz} this ``lookdown coupling'' is obtained by explicitly constructing a permutation-valued process that maps the lookdown to the neutral model. The construction is fairly natural once the rules are understood, though the details are a bit messy. In a bid to better understand what's at play in this correspondence, we introduce the dual notions of forward and backward neutrality (see Section \ref{sec:prelim_neut}). Forward neutrality corresponds to neutrality in the sense discussed so far, which is that assignment of litter sizes to individuals in each generation is exchangeable given the past, while backward neutrality means that assignment of sibling relationships in each generation is exchangeable given the future, a property that holds for all lookdowns discussed in this article and appears to be intrinsic to lookdowns. These notions capture the relevant features of the two types of models discussed in Sections 1-2 of \cite{donktz}: models I \& III of that reference are forward neutral, while their respective lookdown representations II \& IV are backward neutral (see ``forward/backward neutral model based on $(k_n)$'' in Sections \ref{sec:fwdneut}/\ref{sec:bwdneut} for models III \& IV). By scrambling (randomizing uniformly and independently in each generation) the vertices, we show that any forward or backward neutral model with the same litter sizes generates the same unlabelled graph (i.e., has the same distribution modulo graph isomorphism). Then, by untangling the scrambled graphs while preserving independence of certain quantities we recover the desired coupling.\\

The size-biased ordering property of the ordered lookdown is not mentioned in \cite{donktz} nor in later iterations of the lookdown \cite{ktzrdrgs}\cite{ethrdgktz}. However, this property is familiar and well-used in coalescent theory for studying exchangeable partitions of $\N$. As we show in Proposition \ref{prop:lkdnsel}, it follows easily for the ordered lookdown, using the lookdown coupling.\\

\nid\tbf{Spinal decomposition. }The spinal decomposition is a way of representing size-biased Galton-Watson (GW) trees, and other branching particle models, as the union of a distinguished path, called the spine, with other non-spine branches.  Its first use under that name appears to be in \cite{LPP} in which it's used to give concise and intuitive proofs of some results for branching processes, including the Kesten-Stigum theorem \cite{KstStg} (the so-called $L\log L$ criterion, see Section \ref{sec:sd}) that asserts that the size of a supercritical branching process rescaled by its expected value (provided the latter is finite) has a positive limit on the event of survival iff the offspring distribution $(p_k)$ has $\sum_k k\log_+(k) p_k<\infty$. Since then, the spinal decomposition has been used extensively to understand branching particle models; a few examples include generalizing the the results of \cite{LPP} to certain superprocesses \cite{LlogLsprdiff}, the law of large numbers for branching diffusions \cite{SLLNbrdiff}, the survival of branching brownian motion in a strip near criticality \cite{BBMcrit}, and identities that simplify computations, such as the ``many-to-few'' lemma \cite{harrisroberts}. For a nice introduction to the spinal decomposition in the context of branching random walks, see the St-Flour notes of Shi \cite{shisf}. In the case of Galton-Watson (GW) trees, the spinal decomposition is embedded in the ordered lookdown representation, with the spine corresponding to the base path, as described in Section \ref{sec:sd}. This connection has been mentioned before (for example Section 6.2 in \cite{donktz}, referring to the ``immortal particle'' from the paper of Evans \cite{evans}) but to our knowledge has not been rigorously proved. The proof is not difficult, and is included in Section \ref{sec:sd}. In addition, the question of identifiability of the spine was addressed in \cite{sbsurv} (Section 7.3, Question 2): its identification probability is $1$ if $\sum_k kp_k \le 1$ (subcritical and critical case) or if $\sum_k kp_k>1$ and $\sum_k k \log_+(k)p_k=\infty$; we discuss the appearance of the $L\log L$ condition in this context in Section \ref{sec:subgw}. It does not appear easy to prove this using the formulae that we obtain. On the other hand, we are able to give a meaningful interpretation of the result (see Section \ref{sec:subgw}), using the connections that we discover between identifiability, the coalescent time scale and the takeover property (see Theorem \ref{thm:tkover} and Corollary \ref{cor:baseip}).\\

\nid\tbf{Main results. }In addition to the above-mentioned contributions to the theory of lookdown representations, the main results of this article concern the aforementioned three properties and can be summarized as follows:
\enumrom
\item \textit{Takeover. }Takeover occurs almost surely if and only if infinite time passes on the coalescent time scale, see Theorem \ref{thm:tkover}. Intuitively, this is because takeover occurs when, for every $n$, the probability that a uniform random pair of particles chosen in generation $m>n$ has a common ancestor in generation $n$ tends to $1$ as $m\to\infty$. In the case of asynchronous births (at most one litter of size $>1$ in each generation) it is possible to characterize takeover in terms of the sequence of population sizes, see Theorem \ref{thm:tovar}.
\item \textit{Fixation. }Fixation is a more difficult property to characterize than takeover. We're able to show that fixation coincides with takeover when the empirical second moment of multi-child litters is bounded uniformly over generations, see Theorem \ref{thm:fix}. A full characterization of fixation remains an open question.
\item \textit{Identifiability. }Two kinds of information help to identify the vertices of the ordered lookdown: extinction time, and relative frequency, of lineages. We give formulae for the identification probability under certain conditions, and discuss cases in which that probability is equal to 1 for all vertices. In particular, we find that the base path (the unique lowest-ranked path in the lookdown) has identification probability 1 iff takeover occurs. We also conjecture a general formula for identification probability. 
\enumend

The organization of the article is discussed in Section \ref{sec:org} at the end of the introduction.

\subsection{Lookdown representations}\label{sec:lkdnhist}

In this section we give a brief overview of lookdown representations of genealogy models, beginning with a bit of history to describe their origin.\\

\nid\tbf{Population models. }Since the introduction of branching processes (BPs) and population genetics models, later work has sought to generalize them in several directions: enlarging the set of types, i.e., the set of possible values of the relevant traits (finitely many types are called multi-type BPs), allowing arbitrary lifespan and offspring distribution as a function of an individual's age (CMJ BPs), randomizing the environment, adding spatial structure, and taking large-population limits. With respect to the last two points, a good deal of effort has centered on so-called superprocesses, or measure-valued processes, in which the measure represents the population density of individuals as a function of type and/or physical space (see for example the St-Flour notes of Dawson \cite{dawsonstflr} or Etheridge \cite{ethsf} for an introduction). These are generally understood (and derived) as rescaled large-population limits of models in which the time scale of mutation, selection and population growth and decay is slow enough relative to the time scale of replacement, either direct replacement or indirect replacement via birth and death events that are balanced on average, so that the structure of the latter, in the limit, is not destroyed. In particular, such models must be approximately neutral, at the time scale of replacement, in order for the measure-valued limit to exist. Measure-valued generalizations of branching processes are known variously as superBrownian motion or Dawson-Watanabe superprocesses, and of the Wright-Fisher or Moran models are known as Fleming-Viot processes.\\

\nid\tbf{Particle representations and the lookdown. }Early developments in the theory of measure-valued population models centered on the use of generators, and characterized the relevant processes as solutions of the corresponding martingale problems. Eventually, other approaches emerged that made it possible to incorporate the genealogy directly in the model and thereby establish certain properties with greater ease than was possible with older methods. One such approach is the infinite particle representation of the Fleming-Viot process that appears in \cite{donktz0}. It is based upon an ordered rearrangement of the Moran model, which is a simple (continuous-time) neutral model with fixed population size $N$ in which, for each ordered pair of particles, at rate $1/2$ the first particle is removed and the second splits into two particles, keeping the population size constant. In the ordered arrangement, each particle is given a rank, and for each unordered pair, at rate 1 the particle with the higher rank is always removed and that rank is assigned to the offspring of the particle with the lower rank. Since, in the ordered arrangement, the higher rank always ``looks down'' to the lower rank to determine its parent, it is known as the lookdown representation. Both models are shown in Figure \ref{fig:mrngrph} (the lookdown ). 

\begin{figure}
\begin{center}
\includegraphics[width=5in]{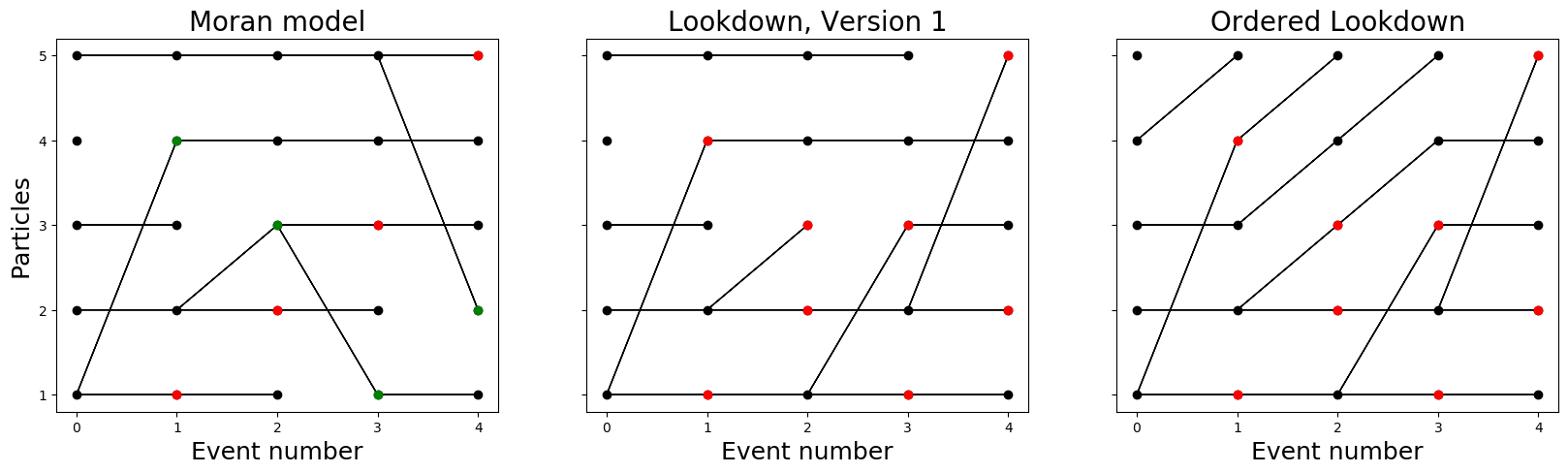}
\end{center}
\caption{Moran model and both versions of the lookdown model, with 5 particles and 4 events corresponding to the same particle pairs in each case. In the Moran model, ordered pairs are red and green, with red replacing green. In the lookdown, both are red, with the replacement rule as specified in the discussion.}
\label{fig:mrngrph}
\end{figure}

The lookdown has three very nice properties:
\enumar
\item It is consistent, i.e., for any $N$ the restriction of the $N$-particle lookdown representation to the lowest $M<N$ ranks is a copy of the $M$-particle lookdown representation,
\item It generates the same lineages as the original model, i.e., for any $N$ the $N$-particle Moran model can be obtained from the lookdown representation by a specific randomization of the particle ranks, and
\item The ranking is conditionally uniform given the past, i.e., with respect to the coupling given by the randomization mentioned above, conditioned on the Moran model on $[0,t]$, the particle ranks are uniformly distributed.
\enumend
Property 1 is not hard to deduce from the definition, and it implies the existence of an infinite-particle lookdown representation. As discussed in the introduction, properties 2-3, the lookdown coupling, take some effort to prove. Properties 2-3 should seem a bit remarkable in light of the fact that low-ranked particles in the lookdown have priority for reproduction and thus have larger than average clades (defining a clade to consist of a particle and its descendants). On the other hand, it's reasonable to expect some clades to be larger than others, simply by random chance, so perhaps the lookdown can be interpreted as a ranking of particles by the future success of their clade; this relates to the question of identifiability discussed below.\\

\nid\tbf{The ordered lookdown. }In another article that appears a few years later \cite{donktz}, the authors present a modified lookdown representation that extends more easily to neutral models with variable population size, and allows them to obtain an infinite particle representation of superBrownian motion. In the case of the Moran model it works as follows: instead of removing one of the two selected particles, ranks are reassigned while preserving their original order, and the particle with the topmost rank is removed. This also satisfies properties 1-3, and its generalization, which can be applied to arbitrary neutral models, follows the same approach: given a sequence of litter sizes, assign litters uniformly at random, order them by their lowest-ranked particle, then assign parents in that order. This ``ordered lookdown'' representation is the one that we study in this article. It has some additional properties that make it convenient to use:
\enumrom
\item[4.] The rule for assigning parents to litters extends to later generations: if $\Xi_{n,m}$ partitions generation $m$ by ancestors in generation $n<m$ and $B_1,\dots,B_{\ell}$ orders $\Xi_{n,m}$ by least element then the parent of each $B_i$ has rank $i$ (see Lemma \ref{lem:ordle}).
\item[5.] Frequencies follow a size-biased ordering: if $X_{n,m}(i)$ denotes the number of descendants, in generation $m$, of the vertex with rank $i$ in generation $n$, then $(X_{n,m}(i))_{i=1}^{X_n}$ is in size-biased order (see Proposition \ref{prop:lkdnsel}).
\enumend

Property 5 should be familiar to anyone who has studied exchangeable partitions of $\N$, see for example the first chapter of \cite{bercoal}, but appears not to have been previously established for finite genealogy models. It is not hard to prove -- it essentially follows by combining properties 3 and 4 -- but it's quite useful. Property 4 implies the extinction time of a clade is non-increasing with rank, since if the particle with rank $i$ has a descendant in generation $m$ then so does the particle with rank $i-1$. In particular, if fixation holds, and if at most one particle is removed per generation, then particle ranks are determined by the extinction time of their clades, and in particular, in the coupling from property 2 it's possible to identify the ranks of particles by looking far enough into the future. This begs the following more general question: under what conditions can we reliably identify the ranks of particles by examining their descendants? This is the question of identifiability. It is closely related to the question of takeover, and can be mostly answered with the help of properties 4 and 5.\\

\nid\tbf{Continuum lookdowns. }Since \cite{donktz}, there have been further iterations on the lookdown representation. In particular, the replacement of integer-valued ranks with real-valued levels has allowed for some interesting generalizations, including to some non-neutral models in which birth and death rates depend on a particle's type and can vary over time. A first attempt in this direction appears in \cite{ktzspatvar}; as noted by its author in a later co-written article \cite{ktzrdrgs} on the same topic, the former is limited in scope and does not provide much useful insight (which is good to know, since we found the exposition in \cite{ktzspatvar} very difficult to parse). In \cite{ktzrdrgs}, branching processes, and more general branching Markov processes, are equipped with level-based representations, in which levels take values in a fixed interval $[0,r]$, and at each time $t$, are conditionally independent uniform on $[0,r]$, given the underlying process on the time interval $[0,t]$, mirroring property 3 above. The level of each particle evolves over time according to differential equations whose terms are related to the birth and death rates of the model. Their framework also allows them to handle more complex processes such as branching processes in random environments. They are able to obtain diffusion and measure-valued limits of these various processes by taking $r\to\infty$ and by taking initial levels to be a Poisson process on $\R_+$, and the analogue of property 3 for those limits is that the levels are conditionally Poisson, rather than conditionally uniform, with intensities given by the underlying particle measure. The analogues of properties 1-3 are established respectively as follows:
\enumar
\item Consistency for different $r$ follows again from the construction of the level-based processes.
\item To show the underlying process is obtained when levels are ignored, the generator of the joint process is integrated against the independent uniform (Poisson for $r=\infty$) distribution.
\item The conditionally uniform (Poisson) property follows from the above generator calculation together with an abstract ``Markov mapping'' theorem that guarantees existence of a process with the desired property. The latter is related to filtering, see for example \cite{ktzflt} \cite{ktznpp}. 
\enumend

Further work on these continuum lookdowns appears in \cite{ethrdgktz}, in which recipes are provided for building continuum lookdown representations of models with any combination of several mechanisms of birth, death, replacement, immigration and movement. They also work several examples. As they note in their introduction, the main novelty of that article is the incorporation of so-called event-based (as opposed to individual-based) mechanisms, in which multiple individuals are simultaneously affected and multiple levels need to be adjusted in a way that respects the aforementioned conditionally uniform/conditionally Poisson structure. \\

The transition rules for levels in continuum lookdowns are reminiscent of the (integer-ranked) ordered lookdown; in particular, levels of offspring at birth are above the parent's level, particles are removed once they go above level $r$, and even the birth rate and offspring level as a function of parent level, and the movement rate given by the differential equations, seem to reflect the way that ranks evolve, on average, in the analogous ordered lookdown, if ranks are mapped onto $[0,r]$ by $i\mapsto ri/X_n$ where $i$ is rank and $X_n$ is population size in generation $n$. It would be interesting to know whether these continuum lookdowns satisfy property 5, i.e., give a size-biased or similar ordering of particles by the number of descendants they have at later times.



\subsection{Organization of the article}\label{sec:org}

The rest of the article is organized as follows. In Section \ref{sec:prelim} we introduce the main definitions and concepts. Genealogy models are introduced in Section \ref{sec:gengrph} and some examples given in Section \ref{sec:basex}. Forward and backward neutral models are defined in Sections \ref{sec:fwdneut} and \ref{sec:bwdneut}, respectively, and a general construction given for a forward or backward neutral model based on a fixed sequence of litter sizes $(k_n)$, see Definition \ref{def:kn} for the meaning. The martingale property of frequencies in forward neutral models is discussed in Lemma \ref{lem:xmart}, and the remainder of Section \ref{sec:prelim_neut} is devoted to coupling results, including Corollary \ref{cor:lkdn}, the lookdown coupling. In Section \ref{sec:sbool} we give background on size-biasing and state the size-biasing property of the ordered lookdown in Proposition \ref{prop:lkdnsel}. In Section \ref{sec:ident_def} we define identification probability, in Section \ref{sec:sd} we introduce the spinal decomposition of GW trees and in Section \ref{sec:coalts} we define the coalescent time scale for genealogy models. Section \ref{sec:tkover} contains the main result on takeover, Theorem \ref{thm:tkover}, with Section \ref{sec:asyn} specializing that result to asynchronous models. Section \ref{sec:subfix} discusses fixation. Section \ref{sec:ident} concerns identifiability. Section \ref{sec:subgw} discusses connections with the spinal decomposition. The remaining sections contain proofs of the more lengthy results and are organized according to their titles.

\section{Preliminaries}\label{sec:prelim}

\subsection{Graphs and models}\label{sec:gengrph}

\nid\tbf{Generational graphs. }A directed graph $G=(V,E)$ is a generational graph if there is a partition $(V_n)_{n\ge 0}$ of $V$ ($n$ is the generation number) such that
\enumrom
\item $E\subset \bigcup_{n\ge 0}V_n\times V_{n+1}$ and 
\item every vertex in $\bigcup_{n\ge 1}V_n$ has in-degree $1$.
\enumend
This holds iff $G$ is a directed forest, i.e., when there are no directed cycles and every vertex has in-degree at most $1$, in which case $V_0$ are the vertices with in-degree $0$ and for each $n>0$, $V_n$ are the vertices at graph distance $n$ from $V_0$.\\

Generational graphs encode lineages of individual-based population models. In discrete-time models, each time step can be viewed as one generation, while in continuous-time models, each event that changes the population can be counted as one generation, with individuals unchanged by that event advanced to the next generation by a single directed edge.\\

\nid\tbf{Visualization. }$X_n:=|V_n|$ is the population size in generation $n$ and $\tau:=\inf\{n\colon X_n=0\} \in \N\cup \{\infty\}$ is the extinction time. To picture a generational graph it's convenient to represent $V_n$ by the set $\{(n,i)\colon i \in \{1,\dots,X_n\}\}$, so that the generation $n$ increases horizontally, and each $V_n$ is a vertical stack of points with height $X_n$ (see Figure \ref{fig:gnlgrph} for an example). In particular, $V$ is determined by the sequence $(X_n)$. For convenience, $V_n$ will often be identified with the set $\{1,\dots,X_n\}$.\\

\begin{figure}
\begin{center}
\includegraphics[width=3in]{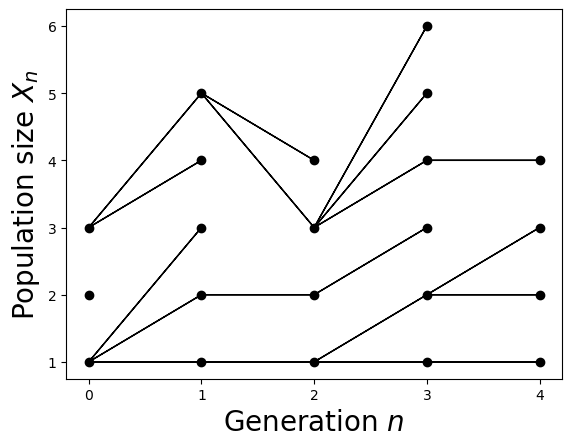}
\end{center}
\caption{Example of genealogy graph with $(X_0,\dots,X_4)=(3,5,4,6,4)$, and litter sizes $K_0=(3,0,2)$, $K_1=(1,1,0,0,2)$, $K_2=(2,1,3,0)$ and $K_3=(1,2,0,1,0,0)$.}
\label{fig:gnlgrph}
\end{figure}

\nid\tbf{Ancestors, descendants. }In a generational graph, for $n<m$ each vertex $v\in V_m$ has a unique ancestor $a_n(v)\in V_n$ (called the parent, and written as $a(v)$, if $m=n+1$) obtained by tracing backward along directed edges; for $v\in V_n$ the set
\[D_m(v):=\{w\in V_m\colon a_n(w)=v\}\]
of vertices in $V_m$ with ancestor $v$ are the \emph{descendants} of $v$ in generation $m$, and the \emph{children} if $m=n+1$. Following \cite{bercoal}, for $n<m$, define the \emph{ancestral partition} $\Xi_{n,m}$ to be the partition of $V_m$ induced by the equivalence relation $\sim_n$ defined by $v \sim_n w$ iff $a_n(v)=a_n(w)$ i.e., if $v,w \in V_m$ have the same ancestor in $V_n$.\\

\nid\tbf{Litter size, frequency. }For any $n$, $v \in V_n$ and $m\ge n$ let $X_m(v)=|D_m(v)|$ be the number of descendants of $v$ in generation $m$, called the \emph{litter size} if $m=n+1$ and written as $K_n(i)=X_{n+1}((n,i))$ for $(n,i)\in V_n$. For $m$ such that $X_m>0$, define the \emph{frequency} $x_m(v):=X_m(v)/X_m$ of $v$ in generation $m$.\\

\nid\tbf{Genealogy model. }For the purpose of this article, a genealogy model is a probability measure on the set of generational graphs.\\

\nid\tbf{Isomorphism, unlabelled graph. }We'll use the usual notion of isomorphism for directed graphs: a pair of generational graphs $(V,E)$ and $(W,F)$ are \emph{isomorphic} if there is a bijective function $f:V\to W$ such that $F=\{(f(v),f(w))\colon (v,w)\in E\}$. If $(V,E)$ and $(W,F)$ are isomorphic and $V,W$ are represented as in the visualization above then $W=V$. Given a generational graph $(V,E)$, which we think of as being labelled by $V_n=\{(n,i)\colon i\in \{1,\dots,X_n\}\}$, the corresponding unlabelled graph is the isomorphism class of $(V,E)$, or equivalently, the set of graphs $\{(V,\sigma(E))\colon \sigma \in \P(V)\}$ where $\P(V)$ is the set of permutations of $V$ that map each $V_n$ to itself and $\sigma(E)=\{(\sigma(v),\sigma(w))\colon (v,w) \in E\}$.\\

\nid\tbf{Scrambling. }Say that a random variable $\sigma = (\sigma_n)_{n<\tau} \in \P(V)$ is uniform if $(\sigma_n)$ are independent and each is a uniform random permutation of $V_n$. For a generational graph $(V,E)$ define the \emph{scrambled} graph by $(V,\sigma(E))$ where $\sigma \in \P(V)$ is uniform. The scrambled and unlabelled graphs are equivalent in the following sense: if $\sigma\in \P(V)$ is uniform, then $(V,E)$ and $(V,F)$ are isomorphic iff $\sigma(E)$ and $\sigma(F)$ have the same distribution. Scrambling is a convenient way to verify that two genealogy models have the same (in distribution) unlabelled graphs.

\subsection{Basic examples}\label{sec:basex}
Recall that $X_n$ is the number of vertices in generation $n$, and use $\E_n$ to denote the random edge set between vertices in generations $n$ and $n+1$. To set the context we begin by constructing the three models from the Introduction: Galton-Watson tree, Wright-Fisher model and Moran model.

\enumrom
\item \emph{Galton-Watson tree.} Fix a distribution on $\N$, i.e., a sequence $(p_k)_{k \in \N}$ with $p_k\ge 0$ and $\sum_k p_k=1$. Let $(K_{n,i})_{n,i\in\N}$ be a doubly-indexed i.i.d.~sequence with $P(K_{n,i}=k)=p_k$, let $X_0=1$ and recursively for $n>0$,
\[X_{n+1}=\sum_{i=1}^{X_n}K_{n,i}\]
then let $c_{n,0}=0$, $c_{n,i}=\sum_{j=1}^i K_{n,j}$ and let
\begin{align}\label{eq:Endet}
\E_n=\{((n,i),(n+1,j))\colon 1\le i\le X_n, \ c_{n,i-1}< j \le c_{n,i}\}.
\end{align}

Note that vertex $(n,i)$ has $K_{n,i}$ children. Following \cite{LPP}, children of vertices in $V_n$ are arranged in contiguous blocks in $V_{n+1}$ in the same order as their parents, forming a planar graph.\\

\item \emph{Wright-Fisher model. }Fix $N$, let $X_n=N$ for all $n$ and with $(a_{n,i})_{n,i \in \N}$ i.i.d.~uniform on $\{1,\dots,N\}$, $n\ge 1$, let
\[\E_n=\{((n,a_{n,j}),(n+1,j))\colon 1 \le j \le N\}.\]

\item \emph{Moran model. }Fix $N$, let $X_n=N$ for all $n$ and with $((a_n,c_n)\colon n\in \N)$ i.i.d.~uniform pairs on $\{1,\dots,N\}$ sampled without replacement, let
\begin{align}\label{eq:EnMrn}
\E_n=\{((n,i),(n+1,i))\}_{i=1}^N\cup\{((n,a_n),(n+1,c_n))\}\setminus \{((n,c_n),(n+1,c_n))\}.
\end{align}
\enumend

\subsection{Neutral models}\label{sec:prelim_neut}

In this section we present a novel approach to the lookdown coupling (properties 2 and 3 in Section \ref{sec:lkdnhist} and Corollary \ref{cor:lkdn} below) that uses two concepts that, to our knowledge, are new: forward and backward neutral, that were discussed in the introduction and are given in Definitions \ref{def:fwd} and \ref{def:back}. We begin with a precise definition of the unlabelled litter sizes.

\begin{definition}[unlabelled litter sizes, model based on $(k_n)$]\label{def:kn}
For $n<\tau$, let $k_n$ be equal to the litter sizes $K_n$ arranged in decreasing order. Then $(k_n)$ is the sequence of \emph{unlabelled litter sizes}.\\

If $\tau\in \N\cup\{\infty\}$ and $(k_n)_{n<\tau}$ is a deterministic sequence for which the number $X_n$ of entries in $k_n$ satisfies $X_{n+1}=\sum_{i=1}^{X_n}k_n(i)$ for $0<n<\tau$, where $X_\tau:=0$, then $(\V,\E)$ is a genealogy model \emph{based on} $(k_n)$ if it has unlabelled litter sizes $(k_n)$.
\end{definition}

One could think of each $k_n$ as a multiset, or as a list modulo permutation of the entries. Unless otherwise stated, for most results we'll assume we're working with a model based on some fixed $(k_n)$. This is a convenient assumption when permuting genealogy graphs, essentially because (i) $(k_n)$ determines the sequence $(X_n)$ of population sizes, and thus the vertex set $V$, and (ii) $(k_n)$ is invariant under permutation. Moreover, for what we do it loses no generality, since in a given model we can condition on the values of $(k_n)$.\\

The importance of $(k_n)$ in the context of neutral models is that it completely determines the distribution of the unlabelled graph; this generalizes property 2 of Section \ref{sec:lkdnhist} and is articulated in Theorem \ref{thm:neut}. The conditionally uniform property, which is property 3 in Section \ref{sec:lkdnhist}, is described in Theorem \ref{thm:fwdback}.

\subsubsection{Forward neutral}\label{sec:fwdneut}

As is nicely expressed on page 4 of \cite{ethsf}, a model is neutral if ``everyone has an equal chance of reproductive success''. Mathematically, this should be true if the list of litter sizes $(K_n(i))_{i=1}^{X_n}$ in each generation $n$, conditioned on the past, i.e., on $(X_m)_{m\le n}$ and $(\E_m)_{m<n}$, is exchangeable, i.e., has the same distribution as $(K_n(\sigma(i)))_{i=1}^{X_n}$ for every permutation $\sigma$ of the set $\{1,\dots,X_n\}$. For the formal definition we condition on the sequence of unlabelled litter sizes. Let $k_n$ be the vector $K_n$ arranged in decreasing order.

\begin{definition}\label{def:fwd}
A genealogy model is \emph{forward neutral} if, conditioned on the sequence of unlabelled litter sizes $(k_n)_{n\in \N}$, for each $n$, $K_n$ is exchangeable and independent of $(\E_m)_{m<n}$.
\end{definition}
This definition captures neutrality in the sense described above; the term forward neutral is used to distinguish it from a related notion, backward neutral, that holds for the lookdown.\\

We take a moment to show the basic examples are forward neutral.

\enumrom
\item \emph{Galton-Watson tree.} Given $n$, condition on $(K_{m,i}\colon m\ne n, \ i\in \N)$ and on $k_n$, which determines $(X_m)_{m\in \N}$, $(k_m)_{m\in \N}$ and $(\E_m)_{m\ne n}$. Since $(K_{n,i})_{i=1}^{X_n}$ are i.i.d., the probability that $K_{n,i}=k_n(\sigma(i))$ for all $i\in \{1,\dots,X_n\}$ does not depend on the choice of permutation $\sigma$.\\

\item \emph{Wright-Fisher model. }In this model $(X_m)$ is deterministic and $(\E_m)$ is an i.i.d.~sequence. Since permuting the entries in $K_n$ does not change $k_n$, it suffices to show that for each $n$, $K_n$ is exchangeable. But
\[K_n(i)= \#\{j\in\{1,\dots,N\}\colon a_{n,j}=i\}\]
and $(a_{n,j})_{i=1}^N$ are i.i.d.~uniform on $\{1,\dots,N\}$.\\

\item \emph{Moran model. }The same argument implies that it suffices to show that for each $n$, $K_n$ is exchangeable. This follows from the fact that $(a_n,c_n)$ is i.i.d.~uniform and
\[K_n(i)=\begin{cases} 2 & \text{if} \ i=a_n, \\
0 & \text{if} \ i=c_n, \\
1 & \text{otherwise}.\end{cases}\]

\enumend

A forward neutral model can be constructed by randomizing the parents and assigning children deterministically. The following is model III in \cite{donktz}.\\

\nid\tbf{Forward neutral model based on $(k_n)$. }
Let $\tau,(k_n),(X_n)$ satisfy the conditions in Definition \ref{def:kn} and let $(\sigma_n)$ be an independent sequence, with each $\sigma_n$ uniformly distributed over the set of permutations of $\{1,\dots,X_n\}$. Let $V_n=\{(n,i)\colon i=1,\dots,X_n\}$, let $K_{n,i} = k_n(\sigma_n(i))$ and define $\E_n$ from $(K_{n,i})$ as in \eqref{eq:Endet}. Then $(V,\E)$ is a forward neutral model based on $(k_n)$.\\

For example, if $k_0=(3,2,0)$, $k_1=(2,1,1,0,0)$, $k_2=(3,2,1,0)$ and $k_3=(2,1,1,0,0,0)$ and $\sigma_n^{-1}(j)$ is coloured red, green, blue for $j=0,1,2,$ respectively then we obtain the graph in Figure \ref{fig:fwdgrph}, which is just Figure \ref{fig:gnlgrph} but with colours added.\\

\begin{figure}
\begin{center}
\includegraphics[width=3in]{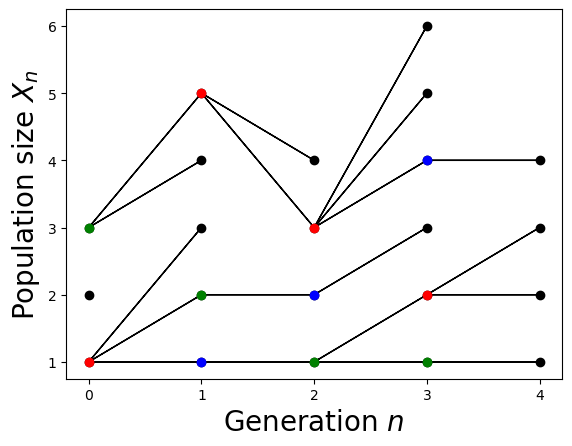}
\end{center}
\caption{Example of forward neutral model based on $k_0=(3,2,0)$, $k_1=(2,1,1,0,0)$, $k_2=(3,2,1,0)$ and $k_3=(2,1,1,0,0,0)$.}
\label{fig:fwdgrph}
\end{figure}

\nid\tbf{Rearrangement. }There is a fairly strong equivalence between forward neutral models with the same unlabelled litter sizes: any one can be rearranged into any other in an adapted way, with respect to the filtration determined by edges in past generations and some auxiliary coin flips. To discuss this property we need to talk about coupling.\\

\nid\textit{Coupling. }Following the usual definition in probability theory, a \emph{coupling} of two genealogy models is a single probability space $(\Omega,\F,P)$ and random variables $(\V,\E)$ and $(\V',\E')$ whose marginal distributions are the respective models. 

\begin{definition}[Permutation coupling]\label{def:coup}
A coupling of genealogy models $(\V,\E)$ and $(\V',\E')$ is a \emph{permutation coupling} if $\V=\V'$ almost surely and if there is a random variable $\sigma \in \P(\V)$ such that $\E'=\sigma(\E)$.
\end{definition}
Abusing terminology somewhat, in such examples we'll refer to $\sigma$ as the permutation coupling.

\begin{lemma}[Rearrangement]\label{lem:fwdcoup}
Let $(V,\E)$ and $(V,\E')$ be forward neutral models based on the same $(k_n)$ and let $U=(U_n)$ be an i.i.d.~$\unif[0,1]$ sequence independent of $\E$. There is a permutation coupling $\E'=\alpha(\E)$ such that for each $n$, $(\alpha_m)_{m\le n}$ is determined by $(\E_m)_{m<n}$ and $(U_m)_{m \le n}$.
\end{lemma}

Lemma \ref{lem:fwdcoup} is important in establishing the forward-backward and lookdown couplings below, respectively Theorem \ref{thm:fwdback} and Corollary \ref{cor:lkdn}.\\

\nid\tbf{Frequency. }Recall $X_m(v),x_m(v)$ are the number, respectively frequency of descendants of $v$ in generation $m$. If each individual in a forward neutral model has an equal chance of success, the frequencies should be martingales, which they are.

\begin{lemma}[Frequency]\label{lem:xmart}
If $(V,\E)$ is forward neutral, then for each $n$ and $v\in V_n$, $(x_m(v))_{m=n}^{\tau-1}$ is a martingale with respect to $(\F_{m-1})_{m=n}^{\tau-1}$, where $(\F_n)_{n\ge 0}$ is the natural filtration of $(\E_n)_{n\ge 0}$. If $\tau=\infty$ then $x_m(v)$ converges a.s. and in $L^1$~as $m\to\infty$.
\end{lemma}
\begin{proof}
Recall that $D(v)$ are the descendants of $v$, and let $D_m(v)=D(v)\cap V_m$ be the descendants of $v$ in generation $m$. Since $K_m$ is exchangeable and independent of $(\E_j)_{j< m}$, for each $i\in\{1,\dots,X_m\}$, $\ev[K_m(i) \mid (\E_j)_{j<m}] = (1/X_m)\sum_i k_m(i) = X_{m+1}/X_m$. Then,
$$\ev[X_{m+1}(v) \mid (\E_j)_{j<m}] = \sum_{i \in D_m(v)} \ev[K_m(i) \mid (\E_j)_{j<m}] = X_m(v)\frac{X_{m+1}}{X_m}.$$
It follows after dividing by $X_{m+1}$ that $\ev[x_{m+1}(v) \mid (\E_j)_{j<m}] = x_m(v)$, as desired. Almost sure and $L^1$ convergence follows from the martingale convergence theorem and the fact that $x_m(v)\in [0,1]$ for all $m$.
\end{proof}

\nid\tbf{Limiting frequency. }Let $x(v)$ denote $\lim_{m\to\infty} x_m(v)$, given by Lemma \ref{lem:xmart} when $\tau=\infty$. We call $x(v)$ the \emph{limiting frequency} of $v$; since $\sum_{v \in V_n}x_m(v)=1$ for $m\ge n$, $L^1$ convergence implies that $\sum_{v\in V_n} x(v)=1$.

\subsubsection{Backward neutral}\label{sec:bwdneut}

In a forward neutral model, litter sizes are exchangeable given the past. A natural ``dual'' notion is to make litters, i.e., sibling relationships, exchangeable given the future. For each $n$ let $\Xi_n$ be the partition of $\{1,\dots,X_{n+1}\}$ induced by the relation $i \sim j \Lrarrow a((n+1,i))=a((n+1,j))$.

\begin{definition}\label{def:back}
A genealogy model is \emph{backward neutral} if, conditioned on the sequence of unlabelled litter sizes $(k_n)_{n\in \N}$, for each $n$, $\Xi_n$ is exchangeable and independent of $(\E_m)_{m>n}$.
\end{definition}

\enumrom
\item \emph{Moran lookdown, version 1. }With $N$, $(X_n)$ as in the Moran model and with $((a_n,c_n)\colon n\in \N)$ i.i.d.~uniform \emph{ordered} pairs on $\{1,\dots,N\}$ with $a_n<c_n$, define $\E_n$ as in \eqref{eq:EnMrn}.\\

\item \emph{Moran lookdown, version 2. }With $N$, $(X_n)$ and $((a_n,c_n)\colon n\in \N)$ as above, let
\[\E_n=\{((n,i),(n+1,i))\}_{i=1}^{c_n-1}\cup \{((n,a_n),(n+1,c_n))\} \cup\{(n,i),(n+1,i+1))\}_{i=c_n}^{N-1} .\]
\enumend

In both versions, $(\E_n)$ is an i.i.d.~sequence and for each $n$,
\[\Xi_n=\{\{i\}\}_{i\notin \{a_n,c_n\}} \cup \{\{a_n,c_n\}\}\]
is exchangeable, which implies backwards neutrality. \\

A backward neutral model can be constructed by randomizing the siblings and assigning parents deterministically, as follows.\\

\nid\tbf{Backward neutral model based on $(k_n)$. }Let $\tau,(k_n),(X_n)$ satisfy the conditions in Definition \ref{def:kn}. A backward neutral model with unlabelled liter sizes $(k_n)$ can be constructed by randomizing sibling relationships and assigning parents deterministically. Let $\xi_n$ be a partition of $\{1,\dots,X_{n+1}\}$ with block sizes given by the non-zero entries of $k_n$; for example, if $k_n(i)>0$ for all $i$ then take
\begin{align}\label{eq:xin}
\xi_n = \{ \{1,\dots,k_n(1)\}, \{k_n(1)+1,\dots,k_n(2)\}, \dots , \{k_n(X_n-1)+1,\dots,k_n(X_n)\}\}.
\end{align}
Let $(\sigma_n)$ be independent, with each $\sigma_n$ a uniform random permutation of $\{1,\dots,X_n\}$ and let $\Xi_n=\{\sigma_n(A)\colon A \in \xi_n\}$. Then $(\Xi_n)$ are i.i.d.~and each $\Xi_n$ is exchangeable, so any choice of $\E_n$ that is a deterministic function of $\Xi_n$ and satisfies
\[\{D_{n+1}((n,i))\}_{i=1}^{X_n}=\Xi_n\]
gives a backward neutral model with unlabelled litter sizes $(k_n)$.\\

\nid\tbf{Ordered lookdown. }The specific choice of deterministic function that generalizes Moran lookdown version 2, and gives the ordered lookdown, is Model IV in \cite{donktz}. It is obtained by first arranging $\Xi_n$ in increasing order of least element, denoting this $(B_1,\dots,B_{\ell_n})$, where $\ell_n = \#\{i\colon k_n(i) \ne 0\}$, then assigning parents in that order, i.e., taking
$$\E_n =\bigcup_{i=1}^{\ell_n} \{((n,i),(n+1,j))\colon j \in B_i\}.$$

For example, with $k_0,\dots,k_3$ as in Figure \ref{fig:fwdgrph} and $B_1,B_2,B_3$ in each generation coloured red,green,blue respectively, we obtain the graph in Figure \ref{fig:bwdgrph}.\\
 
\begin{figure}
\begin{center}
\includegraphics[width=3in]{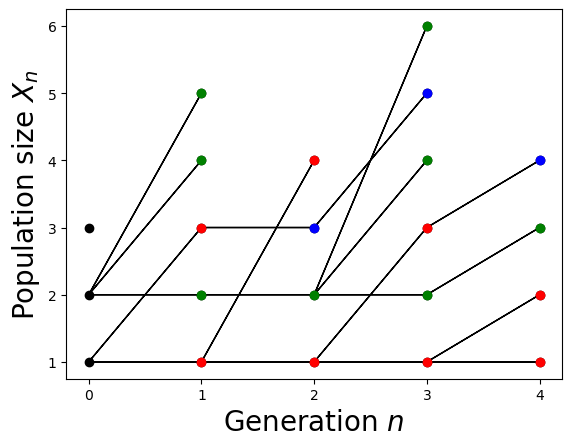}
\end{center}
\caption{Example of the ordered lookdown based on $k_0=(3,2,0)$, $k_1=(2,1,1,0,0)$, $k_2=(3,2,1,0)$ and $k_3=(2,1,1,0,0,0)$.}
\label{fig:bwdgrph}
\end{figure}

Note that vertices $(n,i)$, $i\in\{\ell_n+1,\dots,X_n\}$ have no children, and that a child cannot have a lower rank than its parent, since if $j\in B_i$ then by definition $\min B_1<\dots <\min B_i \le j$ implying $j\ge i$. In particular, $(n+1,1)$ has parent $(n,1)$ for all $n<\tau-1$, creating a path of edges called the \emph{base path}. As explained by Algorithm \ref{alg:sbo2} in Section \ref{sec:sbool}, the process of randomizing sibling relationships, then sorting by least element, leads to a size-biased ordering of litter sizes, meaning that low-ranked vertices tend to have larger litters.

\subsubsection{Forward/backward coupling, the lookdown coupling}

Forward and backward neutral models with the same unlabelled litter sizes share a connection analogous to properties 2-3 of Section \ref{sec:lkdnhist}, respectively,
\enumrom
\item they have the same (in distribution) unlabelled graphs (Theorem \ref{thm:neut}), and
\item there is a permutation coupling $\sigma$ (see Definition \ref{def:coup}) of the two models with respect to which, conditioned on the edges in the forward neutral model up to generation $n$, the permutation $\sigma_n$ between vertices in generation $n$ is uniformly distributed.
\enumend

Property (i) is obtained by showing that a scrambled forward or backward neutral is  \emph{completely neutral} in the following sense.

\begin{definition}\label{def:unif}
$\E$ is \emph{completely neutral} if $(\E_n)_{n\ge 0}$ are independent and each $\E_n$ is exchangeable, i.e., $\E_n\eqd \{(\sigma(v),\sigma'(w))\colon (v,w) \in \E_n\}$ for every $\sigma \in \P_n$ and $\sigma'\in \P_{n+1}$.
\end{definition}

For a model based on a fixed $(k_n)$, exchangeability fixes the distribution of $\E_n$ (see the discussion above the proof of Theorem \ref{thm:neut} in Section \ref{sec:neut}), and the corresponding completely neutral model is unique in distribution. In particular, the following result implies property (i) above.

\begin{theorem}[Scrambling]\label{thm:neut}
Let $(V,\E)$ be a forward or backward neutral model with a fixed $V$ and let $\sigma \in \P$ be uniformly distributed and independent of $\E$. Then,
\enumar
\item $\sigma(\E)$ is completely neutral. 
\item  Moreover, 
\enumalph
\item if $(V,\E)$ is forward neutral then $(\sigma(\E_m))_{m\ge n}$ is independent of $(\sigma_m)_{m\le n}$, and
\item if $(V,\E)$ is backward neutral then $(\sigma(\E_m))_{m<n}$ is independent of $(\sigma_m)_{m\ge n}$.
\enumend
\enumend
\end{theorem}

Statements 2(a)-(b) may appear asymmetrical, but remember that $\E_n \subset V_n\times V_{n+1}$.\\

The following result won't be needed in the sequel, but it expresses nicely the statement that everyone in a forward neutral model has an equal chance of success. Define the descendant subtree $\T(v)$ to be the subgraph of $(V,\E)$ induced by the descendants of $v$, modulo isomorphism of directed graphs.

\begin{corollary}\label{cor:fwdtrees}
Let $(V,\E)$ be a forward neutral model. For each $n$, $(\T(v))_{v \in V_n}$ is exchangeable.
\end{corollary}

\begin{proof}
By Theorem \ref{thm:neut}, the distribution of $(\sigma(\E_m))_{m\ge n}$ conditioned on $\sigma_n=\alpha$ does not depend on $\alpha$, which means the distribution of $(\T(\alpha(v))_{v \in V_n})$ is the same for every $\alpha \in \P_n$.
\end{proof}

Theorem \ref{thm:neut} can be used to couple forward and backward neutral models, but does not immediately yield property (ii). Combining 1,2(b) of Theorem \ref{thm:neut} with Lemma \ref{lem:fwdcoup} we obtain the following result, which is property (ii) above (proof in Section \ref{sec:neut}).

\begin{theorem}[Forward-backward coupling]\label{thm:fwdback}
Let $(V,\E)$ be forward neutral and $(V,\E')$ be backward neutral, based on the same sequence of litter sizes $(k_n)$. There is a permutation coupling $\E'=\alpha(\E)$ such that for each $n$, $\alpha_n$ is uniformly distributed and independent of $(\E_m)_{m<n}$.
\end{theorem}
 
It is important to note that the random variables $(\alpha_n)$ are generally not independent.\\

By taking $(V,\E')$ in Theorem \ref{thm:fwdback} to be the ordered lookdown, we recover the following result, that generalizes Theorem 2.3 of \cite{donktz}. Note that models III \& IV from their paper are, respectively, a forward neutral model, and the ordered lookdown (which is backward neutral), as discussed in Sections \ref{sec:fwdneut}-\ref{sec:bwdneut} above.

\begin{corollary}[Lookdown coupling]\label{cor:lkdn}
Let $(V,\E)$ be a forward neutral model and $(V,\E')$ be the ordered lookdown representation given in Section \ref{sec:bwdneut}, both based on the same $(k_n)$. There is a permutation coupling $\E'=\alpha(\E)$ such that for each $n$, $\alpha_n$ is uniformly distributed and independent of $(\E_m)_{m<n}$.
\end{corollary}

\subsection{Size-biased ordering in the ordered lookdown}\label{sec:sbool}

As mentioned in the introduction, the ordered lookdown, defined precisely in Section \ref{sec:bwdneut}, arranges the individuals in each generation $n$ in size-biased order of the frequency of their descendants in each generation $m>n$, as described in Proposition \ref{prop:lkdnsel} below. The size-biased ordering property is well understood in the theory of exchangeable partitions of $\N$, as discussed in Section 1.2 in \cite{bercoal}, and to adapt it to this context requires essentially two ingredients:
\enumrom
\item the conditionally uniform property of the permutation $\sigma$ that defines the lookdown coupling (see Corollary \ref{cor:lkdn}) and
\item the fact that the ordered lookdown assigns, not only -- by definition -- parents to children, but in any pair of generations, ancestors to descendants, in increasing order of the least-ranked descendant (see Lemma \ref{lem:ordle}).
\enumend

The link between these properties and the desired result is then obtained by the equivalence of Algorithms 1-2 described below. We begin with some definitions.
\begin{definition}[Size-biasing]\label{def:sbrv}
Let $X$ be a random variable on $\R_+$ with $0<E[X]<\infty$. Then $X^*$ has the \emph{size-biased} distribution of $X$ if for $A\subset \R_+$,
\begin{align}
\pr(X^* \in A) = \dfrac{\ev[X \1(X \in A)]}{\ev[X]} 
\end{align}

\end{definition}

For a fairly recent review of size-biasing, see \cite{sbsurv}.

\begin{definition}[Size-biased sample]\label{def:sb}\,
\enumalph
\item Let $(x_1,\dots,x_\ell)\subset \R_+^\ell$ be a finite list with $\sum_i x_i>0$. \\ Then $x_I$ is a size-biased sample if $\pr(I=i) = x_i / \sum_j x_j$.

\item $B$ is a size-biased sample from a partition $P$ of a finite set $A$ if, for each $C\in P$,
$$\pr(B=C) = \frac{|C|}{|A|}.$$
\enumend

\end{definition}

If $P$ is a partition of a finite set $A$, then taking $u$ uniformly distributed on $A$, the block $B(u)$ containing $u$ is a size-biased sample from $P$, since
$$\pr(B(u)=B) = \pr(u\in B) = |B|/|A|.$$

Taking size-biased samples without replacement gives the size-biased ordering.

\begin{definition}[Size-biased ordering]\label{def:sbo}\,
\enumrom
\item $(y_1,\dots,y_\ell)$ is a size-biased ordering of a list $(x_1,\dots,x_\ell)$ if $y_i=x_{\sigma(i)}$ for some $\sigma$ for which, for all $k\le \ell$ and $j_1,\dots,j_k$ for which $x_{j_i}>0$ for all $i\le k$, \[\pr(\sigma(i)=j_i \ \forall i\le k) = \prod_{i=1}^k \frac{x_{j_i}}{\sum_{m=i}^\ell x_{j_m}}.\]

\item $(B_1,\dots,B_\ell)$ is a size-biased ordering of a partition $P$ if for each $i$, conditioned on $B_1,\dots,B_{i-1}$, $B_i$ is a size-biased sample from $P \setminus \{B_j\}_{j<i}$.

\enumend

$(y_i)_{i=1}^\ell$, resp.~$(B_i)_{i=1}^\ell$ is in size-biased order if it is a size-biased ordering of some $(x_i)_{i=1}^\ell$, resp.~$P$.
\end{definition}
The definition for lists accounts for the possibility that $x_i=0$ for one or more $i$, since this occurs for frequencies in genealogy models when a lineage goes extinct; note that when $x_i=0$ for one or more $i$, the distribution of $(y_i)_{i=1}^\ell$ is unique but the distribution of $\sigma$ is not. Also note that the definition for lists and partitions is consistent: if we choose an initial ordering of $P$ as $B_1,\dots,B_\ell$, then let $x_i=|B_i|$ and take $\sigma$ as above, then $B_{\sigma(1)},\dots,B_{\sigma(\ell)}$ is a size-biased ordering of $P$.\\

Size-biased ordering of the blocks of a partition $P$ generalizes uniform selection without replacement, which is the special case of singleton sets $P=\{\{a\}\colon a \in A\}$. 
Size-biased ordering is, essentially, a statistically appealing way of listing a set of numbers in roughly decreasing order, or a collection of sets in roughly decreasing size. It's also the result of a very natural sampling procedure, expressed in the following two equivalent ways.

\begin{algorithm}\label{alg:sbo1}
To place the blocks of a partition in size-biased order, sample uniformly without replacement from the underlying set, then list the blocks in the order they are discovered. In other words, given $A$ and $P$, and $B(u), u\in A$ as above, let $n=|A|$ and $u:\{1,\dots,n\}\to A$ be a uniform random bijection, then let $B_1=B(u(1))$ and for $i>1$, $I_i:=\min\{i\colon u(i) \notin \bigcup_{j<i}B_j\}$ and $B_i = B(u(I_i))$. The property quoted in Definition \ref{def:sbo} follows from the fact that, conditioned on $u(1),\dots,u(I_{i-1})$, $u(I_i)$ is uniform on $A \setminus \bigcup_{j<i}B_j$.
\end{algorithm}

\begin{algorithm}\label{alg:sbo2}
An equivalent formulation to Algorithm \ref{alg:sbo1} is to (i) scramble $A$ and map it onto an ordered set, then (ii) order the scrambled blocks by least element. Let $\sigma:A\to\{1,\dots,n\}$ be a uniform random bijection and order the blocks in increasing order of $\min \sigma(B)$ as $(B_i)_{i=1}^\ell$. Then $u(i):=\sigma^{-1}(i)$ and $I_i:=\min \sigma(B_i)$ are exactly as in Algorithm \ref{alg:sbo1}, i.e., $(B_i)_{i=1}^\ell$ is a size-biased ordering of $P$.
\end{algorithm}

Say that $(B_1,\dots,B_\ell)$ sorts $P$ by least element if $P=\{B_i\}_{i=1}^\ell$ and $i\mapsto \min B_i$ is increasing.\\


Let $D_{n,m}(i)=(D_m((n,i))$, $i=1,\dots,X_n$ denote the vector of descendants, in generation $m$, of vertices in $V_n$ and let $\ell_{n,m}=|\Xi_{n,m}|$ be the number of individuals in generation $n$ that have at least one descendant in generation $m$, with $\ell_n:=\ell_{n,n+1}$. Give $V_n$ the order $(n,i)<(n,j)$ if $i<j$.

\begin{lemma}\label{lem:ordle}
For the ordered lookdown and every $n<m$, $(D_{n,m}(i))_{i=1}^{\ell_{n,m}}$ sorts $\Xi_{n,m}$ by least element.
\end{lemma}

\begin{proof}
Define the \emph{min path} of $v\in V_n$ by $\gma_m(v)=\min D_m(v)$, with the convention $\min\emptyset=\infty$, and for each $n,i$ let $\Gma_n(i)=\min O((n,i))$ denote the lowest-ranked offspring of $(n,i)$ (or $\infty$ if it has none). By definition of the lookdown, $i_1<i_2\le\ell_n$ implies $\Gma_n(i_1)<\Gma_n(i_2)$, from which it follows easily that
\[\gma_m((n,i)) = \Gma_{m-1}\circ \dots \circ \Gma_n(i).\]
In particular, if $i_1<i_2$ and $\gma_m((n,i_2))<\infty$ then $\gma_m((n,i_1))<\gma_m((n,i_2))$. Since there are $\ell_{n,m}$ values of $i$ for which $\gma_m((n,i))<\infty$, they must be $1,\dots,\ell_{n,m}$, and combining with the previous observation, the claim follows.
\end{proof}

For a vertex $v$ define the extinction time of the clade of $v$ by $\tau(v)=\inf\{m\colon D_m(v)=\emptyset\}$. The ordered lookdown arranges vertices in each generation in non-increasing order of $\tau(v)$, as follows.

\begin{corollary}\label{cor:tauord}
For $v \in V$ let $\tau(v)= \inf\{m\colon D_m(v)=\emptyset\}$. For the ordered lookdown, for each $n$, $i\mapsto \tau((n,i))$ is non-increasing.
\end{corollary}

\begin{proof}
Define the min path $\gma_m(v)$ as in the proof of Lemma \ref{lem:ordle}, noting that $i\mapsto \gma_m((n,i))$ is non-decreasing for each $n,m$. By definition, $\tau(v)=1+M(v)$ where for $v\in V_n$, $M(v)=\inf\{m\ge n\colon \gma_m(v)>\ell_m\}$. If $j>i$ and $m=M((n,i))$ then $\gma_m((n,j))\ge \gma_m((n,i))>\ell_m$ so $m\ge M((n,j))$ and $\tau((n,j))=1+M((n,j))\le 1+M((n,i))=\tau((n,i))$, as desired.
\end{proof}

Combining Lemma \ref{lem:ordle} with the lookdown coupling and recognizing Algorithm \ref{alg:sbo2}, we find that the ordered lookdown arranges vertices in size-biased order of the number of their descendants, at any later generation. Write $X_{n,m}(i)=X_m((n,i))$ and let $\ell_{n,m}=|\Xi_{n,m}|$ be the number of individuals in generation $n$ that have at least one descendant in generation $m$ and let $\ell_n^*=\#\{v \in V_n\colon x(v)>0\}$.

\begin{proposition}[Size-biased ordering in the ordered lookdown]\label{prop:lkdnsel}
Let $\E$ denote the ordered lookdown and $\sigma\in \P$ uniform and independent of $\E$.
\enumrom
\item For each $n<m$, conditioned on $(\sigma(\E_k))_{k<m}$, $X_{n,m}$ is in size-biased order.
\item For each $n$, conditioned on $\sigma(\E)$, $(x((n,i)))_{i=1}^{X_n}$ is in size-biased order.
\enumend
\end{proposition}

The proof is given in Section \ref{sec:sbo}.

\subsection{Identification probability}\label{sec:ident_def}

In Section 7.3, Question 2 of \cite{sbsurv} the authors ask whether, given the spinal decomposition of a size-biased Galton-Watson tree (see Section \ref{sec:sd} below for the definition), but not being told which particles form the spine, whether it is possible to identify the spine. To formulate this problem, in Section 7.4 they consider the problem of detecting a random variable sampled from a distribution $Q$, when mixed in with $k-1$ i.i.d.~random variables with distribution $P$, all taking values in a set $S$. Mathematically, they formulate the problem as follows: $Y_1$ has distribution $Q$, $Y_2,\dots,Y_k$ have distribution $P$, and $X_i=Y_{\pi(i)}$ where $\pi$ is a uniform random permutation of $[k]:=\{1,\dots,k\}$, and the problem is to determine $I:=\pi(1)$ by examining $\mathbf{X}:=(X_1,\dots,X_k)$. They define a \emph{selection procedure} to be any function $f:S^k \to [k]$, and its \emph{score} is the probability of correct identification, that is, $s(f):=\pr(f(\mathbf X)=I)$, where $\pr$ is the joint measure of $(Y_1,\dots,Y_k)$ and $\pi$. They observe that the best selection procedures choose $I$ arbitrarily from those indices that maximize the likelihood ratio $r(x)=dQ/dP(x)$, then define the identification probability to be $s(f_0)$ where $f_0$ is any such procedure, or equivalently, $\ev[\max_i r(X_i)]$.\\

We will use an analogous definition, adapted to our context. In our case, the object to be identified is the image of a vertex $v$ under the mapping that takes a labelled graph to its unlabelled counterpart. If the vertex set $V$ is fixed, then in the terminology of Section \ref{sec:gengrph} this amounts to scrambling $(V,\E)$ by applying a uniform random permutation $\sigma$ independent of $\E$, and to attempt to determine $\sigma(v)$ by examining only $\sigma(\E)$. In place of the likelihood ratio, we use the conditional probability of correctly identifying $\sigma(v)$ given $\sigma(\E)$.\\

Let $\pr$ be the joint distribution of $\E$ and $\sigma$ and suppose $v\in V_n$. It's easy to check that $\pr$ can be generated by an i.i.d.~sequence of $\unif[0,1]$ random variables, so it lives on a standard Borel space. This implies existence of the regular conditional distribution of $\sigma(v)$ given $\sigma(\E)$ (see for example Section 4.1.3 in \cite{PTE}). Denote this by
\[\mu(v,w)=\pr(\sigma(v)=w\mid \sigma(\E)).\] Given $\sigma(\E)$, our best guess of $\sigma(v)$ is obtained by taking any element of \[\arg\max\{\mu(v,w)\colon w \in V_n\},\]
and the probability that we guessed correctly is $\max_{w\in V_n}\mu(v,w)$. Averaging over the value of $\sigma(\E)$, we can define the identification probability of $v$ with respect to $\E$ by
\begin{align}\label{eq:ip}
\rho(v):= \ev[ \,\max_{w \in V_n} \mu(v,w)\,].
\end{align}
A similar definition can be used for a list $(v_i)_{i=1}^k$, $k\le \infty$ or a set $U\subset V$ of vertices: for $(w_i)_{i=1}^k \in V^k$ and $W\subset V$ define
\begin{align}\label{eq:muVW}
&\mu((v_i),(w_i))=\pr(\sigma(v_i)=w_i \ \forall i\le k\mid \sigma(\E)), \nonumber \\
&\mu(U,W)=\pr(\sigma(U)=W \mid \sigma(\E))
\end{align}
and define the identification probability by
\begin{align}\label{eq:ip2}
&\rho((v_i)):= \ev[ \,\max_{(w_i) \in V^k} \mu((v_i),(w_i))\,],\nonumber \\
&\rho(U):= \ev[ \,\max_{W\subset V} \mu(U,W)\,].
\end{align}
\subsection{The spinal decomposition}\label{sec:sd}

Following \cite{LPP}, the spinal decomposition (SD) of a Galton-Watson tree is a probability measure on trees with a distinguished path. It exists if the mean number of offspring $\mu:=\sum_k k\,p_k$ is finite. We'll represent it as $(V,\E,\gma)$ where $\gma:\N \to V$, the spine, has $(\gma_n,\gma_{n+1})\in \E_n$ for every $n$. As in Section \ref{sec:basex} let $(K_{n,i})$ be i.i.d.~with $P(K_{n,i}=k)=p_k$ and let $(K_{n,i}^*)$ be i.i.d.~with $\pr(K_{n,i}^*=k)=k\,p_k \,  / \mu$, so that $K_{n,i}^*$ has the size-biased distribution of $K_{n,i}$, as in Definition \ref{def:sbrv}. Let $X_0=1$ and $\gma_0=1$, and let $X_1=K_{0,1}^*$, $\gma_1$ be uniform random on $\{1,\dots,X_1\}$ and $\E_0=\{((1,1),(2,i)\colon i\in\{1,\dots,X_1\}\}$. Then for each $n$, let
\begin{align*}
&X_{n+1} = K_{n,\gamma_n}^* + \sum_{1\le i \le X_n, \ i \ne \gamma_n} K_{n,i}, \\
&c_{n,i}  = \1(i\ge \gamma_n) K_{n,i}^* + \sum_{1 \le j \le i,  \ j \ne \gamma_n} K_{n,j},
\end{align*}
define $\E_n$ as in \eqref{eq:Endet} and let $\gma_{n+1}$ be uniform random on $\{c_{n,\gma_n-1},\dots, c_{n,\gma_n}\}$. Continuing in this way defines the SD. Note that litter sizes along the spine are size-biased, the path of the spine is chosen uniformly at random from its children at each step, and non-spine litter sizes have the same distribution as in the corresponding GW tree.\\

The usefulness of the above construction is its connection to the size-biased GW tree. For a GW tree, the process of rescaled population sizes $(W_n)$ on $\R_+$ defined by $W_n=X_n/\mu^n$ is a martingale, so the measure of the GW tree biased on $(W_n)$, called the size-biased GW tree, can be defined. As explained in Section 2 of \cite{LPP}, for each $n$, the distribution of the SD restricted to generations $\{0,\dots,n\}$ is equal to that of the graph obtained from a size-biased GW tree $(V,\E)$ by letting $\gma_n$ be a uniform random vertex in $V_n$ and defining $(\gma_m)_{m\le n}$ to be the unique path from $V_0=\{(0,1)\}$ to $\gma_n$. The SD is used in \cite{LPP} to prove, among other things, the Kesten-Stigum theorem, that says that $W:=\lim_{n\to\infty} W_n$ is positive on the event of survival if $\sum_k k\log_+(k)p_k<\infty$ and $W=0$ almost surely otherwise.\\

The connection of the SD to the ordered lookdown and the properties of takeover, identifiability and fixation are discussed in Section \ref{sec:subgw}.

\subsection{Coalescent time scale}\label{sec:coalts}

Define the \emph{coalescent time scale} $(t_n)$ by $t_n = \sum_{m=0}^{n-1}s_m$, where $s_n$ is the probability that a uniform random pair of distinct vertices $v,w \in V_{n+1}$ have the same parent in $V_n$. If the increments $(s_n)$ are small, then $(t_n)$ is the time scale on which, looking backwards, the waiting time until the most recent common ancestor of $v$ and $w$ is approximately exponentially distributed with mean $1$.

\section{Main results}\label{sec:main}

\subsection{Takeover}\label{sec:tkover}

As described in the introduction, we say that takeover occurs if there is single lineage for which the proportion of the population in generation $m$ that can trace their ancestry to that lineage tends to $1$ as $m\to\infty$. In terms of generational graphs and limiting frequencies this can be expressed as follows: there exist $(v_n)_{n\ge 0}$ with $v_n\in V_n$ and $v_n=a(v_{n+1})$ for each $n$, such that $x(v_n)=1$ for all $n$. Of course, if $x(v_n)=1$ then $x(w)=0$ for $w\in V_n\setminus \{v\}$ so the lineage is unique, when it exists. Note that takeover does not necessarily imply fixation, i.e., that every other lineage goes extinct; a notable counterexample is  a size-biased GW tree with $\sum_k k\log_+(k)p_k=\infty$, see Section \ref{sec:subgw}.\\

The following result says that the following are equivalent:
\enumrom
\item takeover occurs,
\item infinite time passes on the coalescent time scale.
\enumend
\begin{theorem}[Takeover]\label{thm:tkover}
Let $(V,\E)$ be the ordered lookdown based on some fixed $(k_n)$ and let $t_\infty = \lim_{n\to\infty} t_n$ where $(t_n)$ is the coalescent time scale. Let $u_n=(n,1)$ denote the vertices on the base path.
\enumrom
\item if $t_\infty=\infty$ then $x(u_n)=1$ almost surely for all $n$, and
\item if $t_\infty<\infty$ then $\max_{v \in V_n}x(v)\to 0$ in probability as $n\to\infty$.
\enumend
\end{theorem}

The basic idea is as follows: the coalescent time scale is the time scale on which, as $m$ increases, the frequencies $(x_m(v))_{v \in V_n}$ diffuse towards one of the absorbing states, namely, $(1,0,\dots,0)$ or a permutation thereof. In the lookdown, each vertex on the base path has a built-in reproductive advantage that becomes more pronounced, the more time passes on the coalescent time scale $(t_n)$. In particular, since vertices are arranged in size-biased order of frequency, if there is a vertex whose frequency tends to $1$ then it must be $u_n$. If $t_\infty=\infty$ this process of diffusion has sufficient time to reach the absorbing state, while if $t_\infty<\infty$ then $\sum_{m\ge n}s_m \to 0$ as $n\to\infty$ so for large $n$, there is little time between generations $n$ and $\infty$ for the descendants of any particle to develop much of an advantage.\\

The takeover result is intuitive once the ingredients are understood, and the proof is fairly painless once we're equipped with the ordered lookdown and the coalescent time scale. Though intuitive, the result draws an elegant and natural connection between the coalescent time scale and the (martingale) frequencies in a forward neutral model. The precise formulae that achieve this connection are given by the following lemmas, which are proved in Section \ref{sec:dom}. Let $p_{n,m}$ denote the probability that a uniform random pair of distinct vertices in generation $m$ have a common ancestor in generation $n$. The first result connects $p_{n,m}$ to the coalescent time scale and is not hard to prove.

\begin{lemma}\label{lem:coalprob}
For every $n<m$,
\begin{align}\label{eq:coalprob}
p_{n,m} = 1-\prod_{j=n}^{m-1} (1-s_j).
\end{align}
In particular, $m\mapsto p_{n,m}$ is non-decreasing and \\$p_{n,\infty}:= \lim_{m\to\infty} p_{n,m}=1$ iff $s_j=1$ for some $j\ge n$ or $\sum_j s_j=\infty$.
\end{lemma}

The second result connects $p_{n,\infty}$ to the limiting frequencies on the base path of the lookdown. We rule out the case in which eventually $k_n=(1,\dots,1)$; it messes up the formula and is a trivial case since it means that eventually there is no reproduction (aside from 1:1 replacement) and no death.
\begin{lemma}\label{lem:pnm_freq}
Let $u_n=(n,1)$ denote vertices on the base path of the lookdown and assume $k_n\ne (1,\dots,1)$ infinitely often. For each $n$,
\begin{align}\label{eq:esbasymfreq}
\ev[x(u_n)] = \ev[\sum_{v\in V_n}x(v)^2] = p_{n,\infty}.
\end{align}
\end{lemma}
The first equality expresses the mean of a size-biased sample in terms of the second moment of element sizes, while the second expresses the probability that an independent uniform random pair of points fall in the same element of an interval partition of $[0,1]$, see for example (4) in \cite{bercoal}.

\subsubsection{Asynchronous models}\label{sec:asyn}

To get an idea of the conditions that lead to convergence or divergence of $t_\infty$ it helps to consider the following special case.

\begin{definition}\label{def:asyn}
A model is \emph{asynchronous} if only one individual dies or gives birth in each generation, i.e., if there is a sequence of non-negative integers $(b_n)_{n<\tau-1}$ such that $X_{n+1}=X_n+b_n-1$ and $k_n = (b_n,1,\dots,1)$ for each $n$.
\end{definition}

For an asynchronous model, if $X_{n+1}=1$ then $s_n=1$, and otherwise,
\begin{align}\label{eq:asynsn}
s_n= \displaystyle\frac{\binom{b_n}{2}}{\binom{X_{n+1}}{2}} = \frac{b_n(b_n-1)}{X_{n+1}(X_{n+1}-1)}.
\end{align}

Note that $s_n=1$ iff $X_n=1$. Since $b_n-1=X_{n+1}-X_n$ is the first difference of the sequence $(X_n)$, $s_n$ is comparable to
\[\1(b_n\ge 2) \frac{(X_{n+1}-X_n)^2}{X_{n+1}^2}\]
which, with a bit of effort, we can show is comparable to the quadratic variation process of $\log(X_n)$, counted during increases in $X_n$. This leads to the following tidy characterization.

\begin{theorem}\label{thm:tovar}
Let $\Delta_n = \max(0,\log(X_{n+1}/X_n))$ and suppose that $X_n>1$ eventually. \\For an asynchronous model,
\enumrom
\item If $X_{n+1}\ge 2X_n$ then $s_n\ge 1/4$.
\item If $X_{n+1} \le 2X_n$ then $1/4\le s_n/\Delta_n^2 \le 4(\log 2)^2$.
\enumend
In particular, $t_\infty=\infty$ if and only if
\[\sum_n \Delta_n^2=\infty.\]
\end{theorem}

If we let $Y_n=\log X_n$ then $\Delta_n= (Y_{n+1}-Y_n)_+$, so for a given $Y_0$ and $Y_n$ (and ignoring that $X_n$ must be integer-valued) the values of $Y_1,\dots,Y_{n-1}$ that minimize $\sum_{m=0}^{n-1}\Delta_m^2$ are equally spaced, i.e., $Y_m=(Y_n-Y_0)m/n$ (see the proof of Theorem \ref{thm:tovar}). This means that for a given asymptotic growth profile, i.e., a function $f(n)$ such that $Y_n\sim f(n)$, coalescence will be slowest when the per-generation growth rate is as near to constant as possible.\\

With the above notion in mind, it's interesting to note that when growth is either sufficiently slow (with enough birth and death) or sufficiently fast, then takeover cannot be avoided, as expressed by the following result. 

\begin{theorem}\label{thm:asynfs}
For an asynchronous model, if $\tau=\infty$ and either
\enumrom
\item $X_n = O(\sqrt{n})$ and $\limsup_n(1/n)\#\{m\le n\colon b_m=1\}<1$ or
\item $\limsup_n \log(X_n)/\sqrt{n}>0$,
\enumend
then $t_\infty=\infty$.
\end{theorem}

This result agrees qualitatively with the observations for size-biased Galton-Watson trees in section \ref{sec:subgw}, namely that if $\mu:=\sum_k kp_k\le 1$ then fixation, and thus takeover, occurs, while if $\mu>1$ then takeover occurs iff $\sum_k k\log_+(k)p_k=\infty$. In other words, as the growth rate increases there is a transition from takeover, to no takeover, then back to takeover, in agreement with the results of Theorem \ref{thm:asynfs}. We can go a bit further in making this connection precise. To obtain the genealogy model that corresponds to the jump chain of a continuous time branching process $(Z_t)$ with offspring distribution $(p_k)$, i.e., with
\[Z \to Z+k-1 \ \text{at rate} \ Zp_k,\]
take the forward neutral asynchronous model in which $(b_n)_{n\ge 0}$ are independent random variables with $\pr(b_n=k)=p_k$. Arguing informally, if $Z$ is critical, i.e., $\sum_k kp_k=1$, and in addition $\sum_k k^2p_k<\infty$, then from Section 4 in \cite{LPP}, then under the size-biased measure $Z_t/t$ converges in distribution to a positive limit $W$, so the number of events in $[0,t]$ is of order $2\int_0^t Z_sds \sim Wt^2$ and the jump times $(t_n)$ of $Z$ have $t_n \sim \sqrt{n/W}$, so the jump chain $Y_n:=Z_{t_n}$ has $Y_n \sim c\sqrt{n}$. In other words, population growth of a size-biased critical branching process is just slow enough that Theorem \ref{thm:asynfs} (i) applies. This does not, by itself, provide new information, since a size-biased critical GW tree has a unique infinite path (see Section \ref{sec:subgw}), i.e., fixation holds, which implies takeover. However, it suggests that this case lies on the edge of takeover, and that slightly faster, steady population growth will lead to multiple competing lineages with no clear winner, as in the case $\mu>1$ and $\sum_k k\log_+(k)p_k<\infty$. Case (ii) in Theorem \ref{thm:asynfs} can of course also be achieved in the above context (i.i.d.~$(b_n)$) by making $(p_k)$ sufficiently heavy-tailed; we leave it to the interested reader to determine whether or not the threshold in case (ii) is close to the takeover threshold $\sum_k k\log_+(k)p_k=\infty$.

\subsection{Fixation}\label{sec:subfix}

Recall that fixation occurs if in each generation, at most one vertex has infinitely many descendants. The following result gives a sufficient condition for fixation. First, note that in general,
\begin{align}\label{eq:snform}
s_n = \sum_{i=1}^{X_n} \frac{\binom{k_n(i)}{2}}{\binom{X_{n+1}}{2}} = \sum_{i=1}^{X_n}\frac{k_n(i)(k_n(i)-1)}{X_{n+1}(X_{n+1}-1)}.
\end{align}
For each $n$ let $L_n=\#\{i\colon k_n(i)\ge 2\}$ be the number of multi-child litters in generation $n$, then let
\begin{align}\label{eq:sntrunc}
s_n^o = \frac{2 L_n}{X_{n+1}(X_{n+1}-1)}
\end{align}
and define the \emph{truncated coalescent time scale} $(t_n^o)$ by $t_n^o=\sum_{m=0}^{n-1} s_m^o$, with $t_\infty^o:=\lim_{n\to\infty} t_n^o$. Note that $s_n^o \le s_n$ with equality iff $\max_ik_n(i)\le 2$, so $t_n^o \le t_n$, with equality iff $\max_{n,i} k_n(i) \le 2$.

\begin{theorem}\label{thm:fix}
Let $(V,\E)$ be a neutral model based on $\tau,(X_n)$ and $(k_n)$, with $\tau=\infty$ and let $(t_n^o)$ be the truncated coalescent time scale. If $t_\infty^o=\infty$ then fixation occurs almost surely.
\end{theorem}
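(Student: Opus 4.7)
The plan is to prove Theorem \ref{thm:fix} by fixing a generation $n$ and showing that
\[ Q_m := \#\{u \in V_n : X_m(u) \ge 1\} \]
satisfies $\lim_{m\to\infty} Q_m = 1$ almost surely; a countable union over $n$ then yields fixation. Since $(Q_m)_{m\ge n}$ is integer-valued, non-increasing, and lies in $\{1,\dots,X_n\}$, its a.s.\ limit $Q_\infty$ exists, and fixation at generation $n$ is equivalent to $Q_\infty = 1$ a.s. The task therefore reduces to ruling out the event $\{Q_\infty \ge 2\}$.

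Let $A_m := \{Q_{m+1} < Q_m\}$ be the event that at least one lineage dies out between generations $m$ and $m+1$, and let $\mathcal{F}_m$ denote the natural filtration up to generation $m$. Since $\sum_m \mathbf{1}_{A_m} \le Q_n - Q_\infty \le X_n - 1 < \infty$ a.s., L\'evy's conditional Borel--Cantelli lemma would yield
\[ \sum_{m\ge n} \pr(A_m \mid \mathcal{F}_m) < \infty \quad \text{a.s.} \]
The key quantitative step I would establish is a lower bound of the form
\[ \pr(A_m \mid \mathcal{F}_m) \ge c \cdot s_m^o \quad \text{on}\ \{Q_m \ge 2\}, \]
for some universal $c > 0$. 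Given such a bound, on the event $\{Q_\infty \ge 2\} \subseteq \bigcap_m\{Q_m\ge 2\}$ the conditional-probability series dominates $c\cdot t_\infty^o = \infty$, contradicting the a.s.\ finiteness above, and hence $\pr(Q_\infty \ge 2) = 0$.

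The main obstacle will be establishing the lower bound on $\pr(A_m\mid \mathcal{F}_m)$. The heuristic is that on $\{Q_m\ge 2\}$ the smallest surviving lineage has size $a_{\min} \le \lfloor X_m/2\rfloor$, and each of the $L_m$ multi-child litters forces at least one extra zero-offspring parent (up to corrections for population change); exchangeability of the random permutation $\sigma_m$ should then yield, via a hypergeometric estimate, a probability of order $s_m^o$ that the smallest lineage's $a_{\min}$ descendants are all matched to zero-offspring parents and die in one step. This is transparent when $a_{\min} = 1$, but when $a_{\min} \ge 2$---as occurs in Moran-type models---a single generation may not suffice, and one would instead aggregate over several steps, using that the conditional variance of the smallest-lineage size is itself of order $s_m^o$ and therefore forces the lineage to drift down to size $1$ before it finally vanishes. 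Turning this heuristic into a single uniform $c \cdot s_m^o$ bound, independent of the configuration $(X_m, X_{m+1}, k_m, Q_m)$, will be the technical heart of the argument.
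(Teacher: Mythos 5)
Your approach---tracking the count $Q_m$ of surviving lineages from a fixed generation $n$ and applying L\'evy's conditional Borel--Cantelli---is genuinely different from the paper's, which works entirely in the lookdown representation and tracks the \emph{min path rank} $Y(m) = \min D_m((n,i))$. The paper's observable is monotone (it can only increase), and the proof shows via a change of clock that the expected $t^o$-time spent with $Y(m) = j$ is $O(j^{-2})$, hence the total $t^o$-time before $Y$ exceeds the number of parents (at which point the lineage dies) is a.s.\ finite. That monotonicity is what makes the argument tractable.

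Your key quantitative step, $\pr(A_m \mid \mathcal{F}_m) \ge c\, s_m^o$ on $\{Q_m \ge 2\}$, is false as stated, and this is a genuine gap rather than a technicality. Take $X_m = 4$, $k_m = (2,1,1,0)$ for all $m$, so $L_m = 1$ and $s_m^o = 1/6$ for every $m$ (hence $t_\infty^o = \infty$, so the theorem's hypothesis holds). Suppose at some generation $m$ that $Q_m = 2$ with both surviving lineages of size $2$. Exactly one parent has zero offspring, so at most one individual disappears between $m$ and $m+1$; a lineage of size $2$ cannot die in one step, so $\pr(A_m \mid \mathcal{F}_m) = 0 < c\, s_m^o$. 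More generally, whenever the smallest surviving lineage has size exceeding the number of zero-offspring parents in that generation, $A_m$ is impossible. You anticipate this and suggest aggregating over several steps, but that fix is not straightforward: lineage frequencies are bounded martingales with no drift, so the smallest lineage does not drift toward extinction---only its variance accumulates. Turning accumulated variance into a.s.\ absorption at $0$ in finite time is a strictly stronger statement than the a.s.\ convergence proved in Theorem \ref{thm:base}, and it is precisely the hard part of the theorem. The paper avoids this difficulty by switching to the monotone min-path observable, for which the relevant hitting time can be bounded directly; a multi-step version of your argument would need a new mechanism to control the time the smallest lineage spends near but above size $1$, and it is not clear how to get that without essentially reinventing the lookdown/min-path machinery.
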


Even working with the truncated time scale, the proof of this result has some technical challenges. As explained in Section \ref{sec:fix}, our method proceeds by finding lower bounds on ranks of descendants of particles ordered in the lookdown, with the target estimates determined by examining the Moran model, then with increasing effort, proving these estimates hold in the asynchronous case, then in the general case; to reach the general case, several technical lemmas are needed.\\

It's clear that fixation implies takeover, since if $\tau(v)=\infty$ and $\tau(w)<\infty$ for all $w\in V_n\setminus \{v\}$ then $x_m(v)=1$ for all $m>\max\{\tau(w)\colon w \in V_n\setminus \{v\}\}$. To achieve the converse implication it suffices, by Theorem \ref{thm:fix}, that $s_n=O(s_n^o)$; defining the empirical second moment of multi-child litters by
\[m_{n,2} = \frac{1}{L_n}\sum_{i=1}^{X_n}k_n(i)^2 \1(k_n(i) \ge 2),\]
if $(m_{n,2})$ is bounded then $s_n \le (\sup_n m_{n,2}/2)s_n^o$, and combining Theorems \ref{thm:tkover} and \ref{thm:fix} we obtain the following result.

\begin{corollary}
Let $(V,\E)$ be a neutral model based on $(k_n)$, with $\tau=\infty$, and $m_{n,2}$ as above. If $\sup_n m_{n,2}<\infty$ then the following are equivalent:
\enumrom
\item $t_\infty=\infty$, where $(t_n)$ is the coalescent time scale.
\item fixation occurs almost surely.
\item fixation occurs with positive probability.
\enumend
\end{corollary}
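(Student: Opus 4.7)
The plan is to prove the equivalences in a cycle $(i)\Rightarrow(ii)\Rightarrow(iii)\Rightarrow(i)$, exploiting the sandwich bound $t_n^o \le t_n \le (C(C-1)/2)\,t_n^o$ (with $C := \max_{n,i} k_n(i) < \infty$) noted just before the statement, which forces the coalescent and truncated coalescent scales to be simultaneously finite or infinite.

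For $(i)\Rightarrow(ii)$, I would note that $t_\infty = \infty$ together with the upper bound yields $t_\infty^o = \infty$, and then invoke Theorem \ref{thm:fix} directly to obtain fixation almost surely. The implication $(ii)\Rightarrow(iii)$ is immediate.

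The substance lies in $(iii)\Rightarrow(i)$, which I would prove by contrapositive. Assume $t_\infty < \infty$. Recall Theorem \ref{thm:base}(ii), which guarantees (in the lookdown) that $\max_{v \in V_n} x(v) \to 0$ almost surely. Since both the frequency profile $(x(v))_{v \in V_n}$ and the event of fixation depend only on the unordered descendant subtrees $\{\T(v)\}_{v \in V_n}$, and since the distribution of the latter agrees across neutral models (Theorem \ref{thm:neut}), this almost-sure convergence transfers from the lookdown to the given forward neutral model. On the other hand, on any realization where fixation occurs, for each $n$ there is a unique $v^* \in V_n$ with $\tau(v^*) = \infty$ while every other $w \in V_n$ satisfies $X_m(w) = 0$ for $m \ge \tau(w)$; combined with $\sum_{v \in V_n} x_m(v) = 1$, this forces $x_m(v^*) \to 1$, hence $\max_{v \in V_n} x(v) = 1$ identically in $n$. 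On the event $\{\text{fixation}\}$ this is incompatible with convergence of the maximum to $0$, so fixation must have probability zero, contradicting (iii).

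The main obstacle will be cleanly justifying the transfer of Theorem \ref{thm:base}(ii) from the lookdown to an arbitrary neutral model. The key point is that both fixation and the asymptotic frequency profile are \emph{unlabelled} functionals — measurable with respect to the equivalence class in $\ug$ — so the distributional identity of unordered subtrees across neutral models lifts to equality of the joint laws of the relevant time-indexed processes, thereby preserving almost-sure statements. Once that invariance is articulated, the remainder is a short contradiction.
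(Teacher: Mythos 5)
Your proof is correct and follows essentially the same route the paper intends: the sandwich bound $t_n^o \le t_n \le C(C-1)t_n^o/2$ plus Theorem \ref{thm:fix} for (i)$\Rightarrow$(ii), and the observation that fixation forces $\max_{v\in V_n}x(v)=1$ for all $n$, contradicting Theorem \ref{thm:base}(ii) when $t_\infty<\infty$, for (iii)$\Rightarrow$(i). Your explicit remark that the frequency profile and the fixation event are unlabelled functionals (so Theorem \ref{thm:base}(ii) transfers from the lookdown to an arbitrary neutral model via Theorem \ref{thm:neut}) is a point the paper leaves implicit, but it is the same argument.
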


It's easy to show the conclusion may not hold when $m_{n,2}$ is unbounded: for example, consider the asynchronous case and let $X_0=1$ and $b_n=2^n$, so that $X_n=2^n$. Since there are no deaths, all paths are infinite, and from \eqref{eq:asynsn}, $s_n \to 1/4$ as $n\to\infty$, implying $t_\infty=\infty$. So, while we do not claim the conditions for fixation given in Theorem \ref{thm:fix} are sharp, they are reflective of the fact that, in general, takeover does not imply fixation.

\subsection{Identifiability}\label{sec:ident}

The problem of identification (see Section \ref{sec:ident_def}) is to determine the image of a particular vertex (or a list/set of vertices) after scrambling the graph, by examining the scrambled graph. We study this problem for the ordered lookdown, giving formulae for some identification probabilities as well as conditions under which those probabilities are equal to 1. We will mostly focus on the problem of identifying the vertices in $V_n$ for a fixed $n$.\\

Let $\E$ denote the lookdown and $\sigma$ be uniform, so that $\E':=\sigma(\E)$ is the scrambled graph. Let $\alpha=\sigma^{-1}$. Since $\sigma(v)=w$ iff $v=\alpha(w)$, the problem of identification amounts to understanding the conditional distribution of $\alpha$ given $\E'$. For this purpose it's useful to define the \emph{rank} of $w\in V_n$, denoted $r(w)$, to be the unique $i$ such that $(n,i)=\alpha(w)$. In other words, for a vertex $w$ in the scrambled graph, $r(w)$ is its position in the lookdown. Since $\alpha$ and $r$ encode the same information, it suffices to study $r$.\\

There are two ways of obtaining information about the ranks of vertices in a given generation, namely, by comparing the extinction times of their clades, and the frequency of their descendants in later generations, as described by Corollary \ref{cor:tauord} and Proposition \ref{prop:lkdnsel} respectively.\\

\nid\tbf{Extinction times. }As in Corollary \ref{cor:tauord} let $\tau,\tau'$ denote extinction time with respect to $\E,\E'$ respectively, so that $\tau'(w)=\tau((n,r(w)))$ for $w\in V_n$. Corollary \ref{cor:tauord} tells us that if the clades of vertices $w,u\in V_n$ have different extinction times with respect to $\E'$, say $\tau'(w)<\tau'(u)$, then their ranks have the opposite order, i.e., $r(w)>r(u)$, giving the following result.

\begin{lemma}\label{lem:distext}
With notation as above, order the set $\{\tau(v)\colon v \in V_n\}$ as $\infty\ge \tau_1>\dots>\tau_q $ and let $W_n^{(i)}=\{w\in V_n\colon \tau'(w)=\tau_i\}$. Let $c_n(0)=0$, $c_n(i)=\sum_{j\le i}|W_n^{(i)}|$ and let $V_n^{(i)}=\{c_n(i-1)+1,\dots,c_n(i)\}$. Then $r(W_n^{(i)})=V_n^{(i)}$ for each $i$.
\end{lemma}

In particular, when extinction times are distinct we can recover all ranks with perfect accuracy.

\begin{definition}\label{def:asyndth}
A model has \emph {asynchronous deaths} if at most one individual dies in each generation, i.e., if $\#\{i\colon k_n(i)=0\} \le 1$ for each $n$.
\end{definition}

\begin{theorem}\label{thm:ipasyn}
Let $(V,\E)$ be the ordered lookdown, based on a fixed $(k_n)$, and suppose it has asynchronous deaths and at most one infinite lineage (i.e., fixation holds). Let $(v_i)$ be an enumeration of $V$ and $\rho((v_i))$ as in \eqref{eq:ip2}. Then $\rho((v_i))=1$.
\end{theorem}

\begin{proof}
Let $\sigma\in\P$ be uniform and independent of $\E$ and let $\E'=\sigma(\E)$ denote the scrambled graph. Let $\alpha=\sigma^{-1}$. It suffices to show that $\alpha$ is determined by $\E'$, since then, for any enumeration $(w_i)$ of $V$,
\[\pr( \sigma(v_i)=w_i \ \forall i \mid \sigma(\E)) = \1(v_i=\alpha(w_i) \ \forall i)\]
so letting $(w_i)$ range over all possible enumerations of $V$,
\[\max_{(w_i)}\pr(\sigma(v_i)=w_i \ \forall i \mid \sigma(\E)) = 1\]
and from \eqref{eq:ip2}, $\rho((v_i)) = \ev[\, 1 \,]=1$. To show that $\alpha$ is determined by $\E'$, first note that asynchronous deaths imply extinction times are distinct, i.e., that for each $n$ the $W_n^{(i)}$ of Lemma \ref{lem:distext} are singletons $\{w_n(i)\}$, and $(w_n(i)\colon i=1,\dots,X_n, \ n\in \N)$ enumerate $V$. By Lemma \ref{lem:distext}, $r(w_n(i))=(n,i)$ a.s.~for all $n,i$. Now, $\alpha$ is determined by $r$, which is determined by $(w_n(i))$, which are determined by $\tau'(\cdot)$, which is determined by $\E'$, as we wanted to show.
\end{proof}

\nid\tbf{Relative frequency. }Proposition \ref{prop:lkdnsel} tells us that descendant frequencies in the lookdown follow a size-biased ordering. Let $x_m,x_m'$ denote frequency with respect to $\E,\E'$ respectively, so that $x_m'(w)=x_m((n,r(w)))$ for $w\in X_n$.

\begin{lemma}\label{lem:rcm}
If $(w_i)_{i=1}^{X_n}$ is an enumeration of $V_n$, $\ell \in\{1,\dots,X_n\}$ and $x_m'(w_i)>0$ (equivalently, $\tau'(w_i)>m$) for all $i\le \ell$ then
\begin{align}\label{eq:rcm}
p_m((w_i)_{i=1}^\ell):=\pr (r(w_i)=i \ \forall i\le \ell \mid (\E'_k)_{k<m}) = \prod_{i=1}^\ell \frac{x_m'(w_i)}{\sum_{j=i}^{X_n} x_m'(w_j)}.
\end{align}
With $v_i=(n,i)$ and $\mu(\cdots)$ as in \eqref{eq:muVW}, if $\tau'(w_i)=\infty$ for all $i\le \ell$ then $p_m((w_i)_{i=1}^\ell \to \mu((v_i)_{i=1}^\ell,(w_i)_{i=1}^\ell)$ a.s.~and in $L^1$ as $m\to\infty$.
\end{lemma}

\begin{proof}
By definition, the left-hand side of \eqref{eq:rcm} is equal to
\[\ev[\mu((v_i)_{i=1}^\ell,(w_i)_{i=1}^\ell) \mid (\E_k')_{k<m}]\]
so if $\tau'(w_i)=\infty$ for all $i\le \ell$, $(p_m((w_i)_{i=1}^\ell)_{m\ge n}$ is a UI martingale, and the result follows.
\end{proof}

\begin{theorem}\label{thm:iptauinf}

For each $n$ and $w,u\in V_n$ for which $\tau(w)=\tau(u)=\infty$ the limiting relative frequency
\begin{align}\label{eq:limrelfreq}
x(w,u)=\lim_{m\to\infty} \frac{x_m(w)}{x_m(u)} \in [0,\infty]
\end{align}
exists. Let $(w_i)_{i=1}^{X_n}$ be an ordering of $V_n$ for which $x(w_j,w_i)\le 1$ for all $j\ge i$. For any $\ell \in \{1,\dots,X_n\}$
\begin{align}\label{eq:tauinfip}
\rho(((n,i))_{i=1}^\ell) = \ev\left[ \prod_{i=1}^\ell \bigg(1+\sum_{j=i+1}^{X_n}x(w_j,w_i)\bigg)^{-1}\right].
\end{align}
In particular,
\begin{align}\label{eq:baseip}
\rho((n,1)) = \ev[\max_{v\in V_n}x(v)].
\end{align}
\end{theorem}
Aside from dealing with limits as $m\to\infty$, \eqref{eq:tauinfip} amounts to showing that the ordering with maximal probability in a size-biased ordering is the obvious choice, i.e., arranging entries in decreasing order.\\

Combining \eqref{eq:baseip} with Theorem \ref{thm:tkover} and noting that $\max_{v \in V_n}x(v) \ge x(u_n)$ we see that $\rho((n,1))=1 \ \forall n$ if $t_\infty=\infty$ and $\rho((n,1))\to 0$ as $n\to\infty$ if $t_\infty<\infty$. After showing that $\rho((n,1)_{n\ge 0})=\lim_{n\to\infty}\rho((n,1))$ (see Section \ref{sec:ip}) we obtain the following result; in words, if infinite time passes on the coalescent time scale then the base path can be identified from the unlabelled graph with complete certainty just by examining the edges, but if not, there is no chance of identifying the (entire) base path correctly. 

\begin{corollary}\label{cor:baseip}
For the ordered lookdown based on a fixed $(k_n)$,
\enumrom
\item If $t_\infty=\infty$ then $\rho((n,1)_{n\ge 0})=1$, and
\item if $t_\infty<\infty$ then $\rho((n,1)_{n\ge 0})=0$.
\enumend
\end{corollary}
The result is quite natural from the following perspective: since they come first in the size-biased ordering, litter sizes along the base path are statistically larger than those of other particles, so the base path can be picked out with certainty iff the effect of that advantage has enough time, on the coalescent time scale, to be visible. It's interesting to note that the base path need not be the unique infinite path in order to be identifiable, since we can also rely on frequency to find it.\\

We expect that extinction time and relative frequency suffice to fully describe the conditional distribution of $r$ given $\E'$, and that the following holds.

\begin{conjecture}\label{conj:ranks}
With $(\tau_i)_{i=1}^q, (W_n^{(i)})_{i=1}^q$ as in Lemma \ref{lem:distext}, conditioned on $\E'$,

\enumrom
\item the random vectors $\{(r(w))_{w \in W_n^{(i)}} \colon i \in \{1,\dots,q\}\}$ are independent,
\item for any ordering $(w_j)_{j=1}^k$ of $W_n^{(i)}$,
\[\pr(r(w_j)=j \ \forall j\le k \mid \E')=\prod_{j=1}^k \frac{x_{\tau_i-1}'(w_j)}{\sum_{a=j}^k x_{\tau_i-1}'(w_a)}\]
\enumend
\end{conjecture}

In words, we expect the ranks of vertices with different extinction times are independent, and that for each value of the extinction time, the corresponding ranks are determined by a size-biased ordering of their frequencies just prior to extinction. If Conjecture \ref{conj:ranks} holds, then in principle there is always a formula for the identification probability in terms of extinction time and relative frequency, though it will be a bit messy in general. We would, however, have the following straightforward generalization of Theorem \ref{thm:ipasyn}: all vertices in the ordered lookdown can be identified with probability 1 iff in each generation, (i) vertices whose clades go extinct in finite time have distinct extinction times, i.e., for all distinct $v,w \in V_n$ for which $\tau(v),\tau(w)<\infty$, $\tau(v)\ne \tau(w)$, and (ii) vertices whose clades never go extinct have relative frequency $0$ or $\infty$, i.e., for all distinct $v,w \in V_n$ for which $\tau(v)=\tau(w)=\infty$, $x(v,w) \in \{0,\infty\}$; the latter part makes use of \eqref{eq:tauinfip}. It would be interesting to know whether there exists a neutral model with multiple surviving lineages for which relative frequencies are all $0$ or $\infty$.

\subsection{The spinal decomposition}\label{sec:subgw}

For size-biased Galton-Watson (GW) trees, the question of identifiability of the spine is addressed in \cite{sbsurv}. As we show, the spinal decomposition can be obtained from the ordered lookdown, with the spine corresponding to the base path, so we also discuss connections with those results. The GW tree with offspring distribution $(p_k)$ is constructed in Section \ref{sec:basex} and is shown to be forward neutral in Section \ref{sec:fwdneut}.\\

\nid\tbf{Spinal decomposition. }Following \cite{LPP}, the spinal decomposition (SD) is a GW tree with a distinguished path, the spine, along which the distribution of litter sizes is size-biased, and it exists if the mean number of offspring $\mu:=\sum_k k\,p_k$ is finite. We shall denote it $(V,\E,\gma)$ where $\gma:\N \to V$, the spine, has $(\gma_n,\gma_{n+1})\in \E_n$ for every $n$. For tidiness, we'll denote $\gma_n$ by its position in $V_n$. As in Section \ref{sec:basex} let $(K_{n,i})_{n,i\in\N}$ be a doubly-indexed i.i.d.~sequence with $P(K_{n,i}=k)=p_k$ and let $(K_{n,i}^*)$ be i.i.d.~with $\pr(K_{n,i}^*=k)=k\,p_k \,  / \sum_j jp_j$, so that $K_{n,i}^*$ has the size-biased distribution of $K_{n,i}$, as in Definition \ref{def:sbrv}. Let $X_0=1$ and $\gma_0=1$, and let $X_1=K_{0,1}^*$, $\gma_1$ be uniform random on $\{1,\dots,X_1\}$ and $\E_0=\{((1,1),(2,i)\colon i\in\{1,\dots,X_1\}\}$. For $n\ge 1$, let
\begin{align*}
X_{n+1} & =(K_{n,\gma_n}^*-K_{n,\gma_n}) + \sum_{i =1}^{X_n}K_{n,i}  = K_{n,\gamma_n}^* + \sum_{i \ne \gamma_n} K_{n,i}\ \ \text{and} \\
c_{n,i} & =\1(i \ge \gma_n)(K_{n,i}^*-K_{n,i}) + \sum_{j=1}^i K_{n,j} = \1(i\ge \gamma_n) K_{n,i}^* + \sum_{j \le i,  \ j \ne \gamma_n} K_{n,j}.
\end{align*}
As before let $\E_n=\{(n,i),(n+1,j)\colon c_{n,i-1} < j \le c_{n,i}\}$ and let $\gma_{n+1}$ be uniform random on $\{c_{n,\gma_n-1},\dots, c_{n,\gma_n}\}$. Let $\E=\bigcup_n \E_n$. Then $(V,\E,\gma)$ is the SD.\\

\nid\tbf{Connection to the lookdown. }For a GW tree, the process of rescaled population sizes $(W_n)$ on $\R_+$ defined by $W_n=X_n/\mu^n$ is a martingale, so the measure of the GW tree biased on $(W_n)$, called the size-biased GW tree, can be defined. As explained in Section 2 of \cite{LPP}, the key property of the SD that facilitates its analysis is that for fixed $n$, its restriction to generations $0,\dots,n$ is equal in distribution to that of the graph obtained from a size-biased GW tree $(V,\E)$ by selecting a uniform random vertex in $V_n$ for $\gma_n$, and letting $(\gma_m)_{m\le n}$ be the unique path from $V_0$ to $\gma_n$.\\

Since $X_n$ is determined by $(k_m)_{m\le n}$, the effect of size-biasing is to multiply probabilities by some function of $(k_n)$. In particular, conditioned on $(k_n)$ the GW and size-biased GW trees have the same distribution, that we'll represent by $(V,\E)$, and have the same ordered lookdown, that we'll denote by $(V,\E')$. By Corollary \ref{cor:lkdn} there is a coupling $\E'=\alpha(\E)$ for which $\alpha_n$, and thus also $\alpha_n^{-1}$, is uniform and independent of $(\E_m)_{m<n}$. Let $\gma_n=\alpha_n^{-1}(1)$, so that $(\gma_n)$ is the preimage of the base path in the lookdown, then conditioned on $(\E_m)_{m<n}$, $\gma_n$ is uniform random on $V_n$. Giving $(k_n)$ the distribution assigned by the size-biased GW tree, $(V,\E,\gma)$ has the distribution of the SD since, from the previous paragraph, the distribution of its restriction to generations $0,1,\dots,n$ agrees with that of the SD for each $n$.\\

\nid\tbf{Fixation, takeover and identifiability. }In the above construction of the SD, all subtrees stemming from vertices not on the spine are (unbiased) GW trees, and independent for distinct vertices. Unbiased GW trees are a.s.~finite iff $\mu \le 1$, so for a size-biased GW tree, fixation has probability $1$ if $\mu\le 1$ and $0$ if $\mu>1$. In particular, from the above correspondence, together with Theorem \ref{thm:tkover}, Corollary \ref{cor:baseip} and the fact that fixation implies takeover, the spine, or equivalently the base path in the lookdown, is a.s.~identifiable if $\mu\le 1$. In \cite{sbsurv}, Theorem 7.4, it's shown that if $\mu>1$, the spine is identifiable with probability $0$ if $\sum_k k\log_+(k)p_k<\infty$ and with probability $1$ if $\sum_k k\log_+(k)p_k=\infty$, which is also the threshold that separates convergence of the rescaled size of the process to $0$ or a positive limit (see Kesten-Stigum theorem, described in Section \ref{sec:sd}). This is no coincidence, as both results hinge on the fact that the measure of the size-biased and unbiased trees are equivalent if $\sum_k k\log_+(k)p_k<\infty$ and mutually singular if $\sum_k k\log_+(k)p_k=\infty$; for the Kesten-Stigum theorem this can be used to deduce the behaviour of $W$ with respect to the unbiased measure using the ``key dichotomy'' ((3.2) in \cite{LPP}) while for identifiability, singularity means that it is possible to pick out a biased tree from among a collection of unbiased trees and thus identify the spine.\\

If we combine the identifiability result with Corollary \ref{cor:baseip}, the coalescent time scale $(t_n)$ has a finite limit a.s.~if $\sum_k k\log_+(k)p_k<\infty$ and diverges if $\sum_k k\log_+(k)p_k=\infty$, a fact that does not appear easy to verify directly, i.e., without using the connection between the coalescent time scale and identifiability. The result has the following very natural explanation: if $\mu>1$ but $\sum_k k\log_+(k)p_k<\infty$, population growth is fast and litter sizes are on average small enough that no single set of descendants is dominant, while if $\sum_k k\log_+(k)p_k=\infty$, size-biased litters are so large that they overwhelm all other growth, leading to takeover by the spine.


\section{Proofs of results from Section \ref{sec:prelim_neut}}\label{sec:neut}


As in \eqref{eq:xin}, let $\xi_n$ be a fixed partition with block sizes given by the non-zero entries of $k_n$. Let $\P_n$ be the set of permutations of $\{1,\dots,X_n\}$ and let $\sigma_n$ be uniformly distributed on $\P_n$. Write $\eqd$ to mean equality in distribution. Suppose $(k_n)$ is fixed. Say that $K_n$ is a uniform rearrangement of $k_n$, or \emph{$k_n$-uniform}, if $(K_n(i))_{i=1}^{X_n} \eqd (k_n(\sigma_n(i)))_{i=1}^{X_n}$, similarly, that $\Xi_n$ is \emph{$\xi_n$-uniform} if $\Xi_n \eqd \{\sigma_{n+1}(A)\colon A \in \xi_n\}$. Then, we have the following easily verified fact.

\begin{remark}\label{rem:unif}
Let $\xi_n$, $\sigma_n$ be as above. For a genealogy model based on $(k_n)$,
\enumrom
\item $K_n$ is exchangeable iff it is $k_n$-uniform.
\item $\Xi_n$ is exchangeable iff it is $\xi_n$-uniform.
\enumend
In particular, the model is
\enumrom
\item \emph{forward neutral} iff for all $n$, conditioned on $(\E_m)_{m<n}$, $(K_n(i))$ is $k_n$-uniform, and is
\item \emph{backward neutral} iff for all $n$, conditioned on $(\E_m)_{m>n}$, $\Xi_n$ is $\xi_n$-uniform.
\enumend
\end{remark}

For a fixed $k_n$, there is only one $k_n$-uniform distribution, i.e., knowing that $K_n$ is $k_n$-uniform fixes its distribution. We exploit this fact in the proof of Lemma \ref{lem:fwdcoup}, that describes how to rearrange one forward neutral model into another.

\begin{proof}[Proof of Lemma \ref{lem:fwdcoup}]
Since both models are forward neutral with the same $(k_n)$, Remark \ref{rem:unif} implies that both $K_0$ and $K_0'$ are $k_n$-uniform, and in particular $K_0 \eqd K_0'$, so let $\lph_0$ be the identity. Proceeding by induction, for $n\ge 0$ suppose $(\alpha_m)_{m\le n}$ are such that $((\alpha(\E_m))_{m<n},\lph_n(K_n))\eqd ((\E_m')_{m<n},K_n')$ and $(\alpha_m)_{m\le n}$ is determined by $(\E_m)_{m<n}$ and $(U_m)_{m < n}$. We'll complete the induction step as follows:
\enumar
\item Obtain $\alpha_{n+1}$ as a function of $U_n$, $(\alpha_m)_{m\le n}$ and $(\E_m)_{m\le n}$ such that $(\alpha(\E_m))_{m\le n} \eqd(\E_m')_{m\le n}$.
\item Show that, conditioned on $(\alpha(\E_m))_{m\le n}$, $\alpha_{n+1}(K_{n+1})$ is $k_n$-uniform.
\enumend
To see that this suffices, note that by Remark \ref{rem:unif}, conditioned on $(\E_m')_{m\le n}$, $K_{n+1}'$ is $k_n$-uniform, so the second half of 1 combined with 2 implies that
\[((\lph(\E_m))_{m\le n}, \lph_{n+1}(K_{n+1})) \eqd ((\E_m')_{m\le n},K_{n+1}'),\]
and the first half of 1 combined with the second half of the induction assumption implies that $\lph_{n+1}$ is a function of $(U_m)_{m\le n}$ and $(\E_m)_{m\le n}$, completing the induction step. It remains to show 1-2.\\

In order for $(\lph(\E_m))_{m\le n} \eqd (\E_m')_{m\le n}$ to follow from the induction assumption it suffices to use $\alpha_{n+1}$ to arrange the vertices in $V_{n+1}$ such that the distribution of $\lph(\E_n)$ conditioned on $((\alpha(\E_m))_{m<n}, \alpha(K_n))$ is correct, which can be done by taking $\alpha_{n+1}$ to be some function of $(\alpha(\E_m)_{m<n},\alpha(K_n))$ and an independent $\unif[0,1]$ random variable -- this is true for conditional distributions of any discrete random variables, or more generally those with regular conditional distributions. We claim that $U_n$ is independent of $(\alpha(\E_m)_{m<n},\alpha(K_n))$, and thus suitable for this role. To see this, note that $((\lph(\E_m))_{m\le n},\lph(K_n))$ is determined by $(\E_m)_{m\le n}$ and $(\lph_m)_{m\le n}$, and that by the induction assumption, the latter are determined by $(\E_m)_{m\le n}$ and $(U_m)_{m<n}$, of which $U_n$ is independent. It then follows also that $\lph_{n+1}$ is determined by $(\E_m)_{m\le n}$ and $(U_m)_{m\le n}$, completing 1.\\

For 2, first condition on the value of $(U_m)_{m\le n}$. This does not affect the joint distribution of $((\E_m)_{m\le n},K_{n+1})$, and $(\alpha_m)_{m\le n+1}$ is now determined by $(\E_m)_{m\le n}$. Since $\E$ is forward neutral and based on $(k_n)$, $K_{n+1}$ is $k_n$-uniform and independent of $(\E_m)_{m\le n}$. So, conditioned on both $(U_m)_{m\le n}$ and $(\E_m)_{m\le n}$, $K_{n+1}$ is $k_n$-uniform. Since $(\alpha_m)_{m\le n+1}$ is determined by $(U_m)_{m\le n}$ and $(\E_m)_{m\le n}$, and since the $k_n$-uniform distribution is invariant under independent permutation, conditioned on $(\alpha(\E_m))_{m\le n}$, $\alpha_{n+1}(K_{n+1})$ is $k_n$-uniform, which gives 2.
\end{proof}

Next we prove Theorem \ref{thm:neut}, that shows that scrambling a forward or backward neutral model gives the completely neutral model. Recall that $\E_n$ is exchangeable if $\E_n\eqd \{(\sigma(v),\sigma'(w))\colon (v,w) \in \E_n\}$ for every $\sigma \in \P_n$ and $\sigma'\in \P_{n+1}$. Suppose $(k_n)$ is fixed. Then, similarly to Remark \ref{rem:unif}, if $\E_n$ is exchangeable then it is $k_n$-uniform in the sense that $\E_n\eqd \{(\sigma(v),\sigma'(w)\colon (v,w) \in E\}$, where $E$ is any fixed set of edges on $V_n\times V_{n+1}$ that has unlabelled litter sizes $k_n$ (and each $w\in V_{n+1}$ has a unique $v\in V_n$ such that $(v,w)\in E$). In other words, when $k_n$ is fixed, knowing that $\E_n$ is exchangeable fixes its distribution.\\

Recall that $a(v)$ is the parent of $v$, and for each $n$ and $v\in V_n$ let $O(v)=\{w\in V_{n+1}\colon v=a(w)\}$ denote the offspring of $v$. It's easy to check that $\E_n$ is exchangeable iff
\enumrom 
\item $K_n$ is exchangeable and conditioned on $K_n$, $(O(v))_{v\in V_n}$ is exchangeable, or if
\item $\Xi_n$ is exchangeable and conditioned on $\Xi_n$, $(a(v))_{v\in V_{n+1}}$ is exchangeable.
\enumend

Recall that 
$\P$ is the set of permutations of $V$ that map each $V_n$ to itself, and that $\sigma\in \P$ is uniformly distributed if the restrictions $(\sigma_n)$ of $\sigma$ to $V_n$ are independent and uniformly distributed. Let $O_n = (O(v))_{v\in V_n}$ and $a_n = (a(v))_{v \in V_{n+1}}$. For $\sigma \in \P$, define $\sigma(K_n)$, $\sigma(\Xi_n)$ and $\sigma(\E_n)$ by $\sigma(K_n)(i) = K_n(\sigma_n(i))$, $\sigma(\Xi_n)=\{\sigma_{n+1}(A)\colon A \in \Xi_n\}$ and $\sigma(\E_n)=\{(\sigma_n(v),\sigma_{n+1}(w))\colon (v,w)\in\E_n\}$, similarly for $\sigma(O_n)$ and $\sigma(a_n)$.

\begin{proof}[Proof of Theorem \ref{thm:neut}]
We need to prove statements 1 and 2 of the theorem. For 1 it suffices to show either that
\enumrom
\item for each $n$, $\sigma(K_n)$ is exchangeable and independent of $(\sigma(\E_m))_{m<n}$, and \\conditioned on $(\sigma(\E_m))_{m<n}$ and $\sigma(K_n)$, $\sigma(O_n)$ is exchangeable, or
\item for each $n$, $\sigma(\Xi_n)$ is exchangeable and independent of $(\sigma(\E_m))_{m>n}$ and \\ conditioned on $(\sigma(\E_m))_{m>n}$ and $\sigma(\Xi_n)$, $\sigma(a_n)$ is exchangeable.
\enumend

We begin with the forward case and show (i). Roughly speaking, since $K_n$ is exchangeable and independent of $(\E_m)_{m<n}$, $\sigma_{n+1}$ can be used to randomize $O_n$. More precisely, since $K_n$ is exchangeable and independent of $(\E_m)_{m<n}$, and $\sigma$ is independent of $\E$, $\sigma(K_n)$ is exchangeable and independent of $(\sigma(\E_m))_{m<n}$. Moreover, $\sigma(K_n)$ is determined by $\E_n$ and $\sigma_n$. Since $\sigma_{n+1}$ is uniform and independent of $(\sigma_m)_{m\le n}$ and $\E$, conditioned on $(\sigma(\E_m))_{m<n}$ and $\sigma(K_n)$, $\sigma(O_n)$ is exchangeable, as desired. The backward case is analogous, showing (ii) instead of (i), so we omit it.\\

To obtain 2(a), first condition on $(\sigma_m)_{m\le n}$. Since $K_n$ is exchangeable, so is $\sigma(K_n)$. Since $\sigma_{n+1}$ is independent of $\E$, conditioned on $K_n$ and thus on $\sigma(K_n)$, $\sigma(O_n)$ is exchangeable. We have shown that conditioned on $(\sigma_m)_{m\le n}$, $\sigma(\E_n)$ is exchangeable. For $m>n$, continue as in the paragraph above to show that $\sigma(\E_m)$ is exchangeable and independent of $(\sigma(\E_j))_{j=n}^{m-1}$. Since exchangeability fixes the distribution of each $\sigma(\E_m)$, the desired result is proved, since we have shown the distribution of $(\sigma(\E_m))_{m\ge n}$ is the same, no matter the values of $(\sigma_m)_{m\le n}$. 2(b) is similar, beginning from $\Xi_n$ and working back, so we omit it.
\end{proof}

Finally we prove the forward-backward coupling. 

\begin{proof}[Proof of Theorem \ref{thm:fwdback}]
If $(V,\E)$ is completely neutral, the result follows almost immediately from Theorem \ref{thm:neut}: given backward neutral $\E'$, let $ \sigma \in \P$ be uniform and independent of $\E'$, then let $\E=\sigma(\E')$. By Theorem \ref{thm:neut}, $(\E_m)_{m<n}$ is independent of $(\sigma_m)_{m\ge n}$, so $\sigma_n$ is uniform and independent of $(\E_m)_{m<n}$. Let $\alpha=\sigma^{-1}$. Then $\E'=\alpha(\E)$, and for each $n$, $\alpha_n$ is uniform and independent of $(\E_m)_{m<n}$, as desired.\\

To achieve the coupling in general, we scramble $\E'$ to obtain $\E''$, then arrange $\E''$ into $\E$. Let $\sigma$ be uniform and independent of $\E'$ and let $\E''=\sigma(\E')$. Let $(U_n)$ be i.i.d.~$\unif[0,1]$, independent of both $\E'$ and $\sigma$ and let $\E=\beta(\E'')$ with $\beta$ as in Lemma \ref{lem:fwdcoup}.\\

Let $\F_n$ be the $\sigma$-algebra generated by $(\E_m'')_{m<n}$ and $(U_m)_{m<n}$, so that $(\beta_m)_{m\le n}$ is $\F_n$-measurable. Since $(\E_m)_{m<n}$ is determined by $(\E_m'')_{m<n}$ and $(\beta_m)_{m\le n}$, $(\E_m)_{m<n}$ is $\F_n$-measurable. Let $\alpha= (\beta \circ \sigma)^{-1}$, so that $\E'=\sigma(\E)$. By Theorem \ref{thm:neut}, $\sigma_n$ is uniform and independent of $(\E_m'')_{m<n}$. By definition of $(U_m)$, it follows that $\sigma_n$ is independent of $\F_n$. Since $\beta_n$ is $\F_n$-measurable and $\sigma_n$ is uniform and independent of $\F_n$, it follows that, conditioned on $\F_n$, $\alpha_n$ is uniform. In particular, $\alpha_n$ is uniform and independent of $\F_n$. Since $(\E_m)_{m<n}$ is $\F_n$-measurable the result is proved.
\end{proof}




\section{Size-biased ordering in the ordered lookdown}\label{sec:sbo}

Here we prove Proposition \ref{prop:lkdnsel}. We will occasionally abuse notation and identify $(n,i)\in V_n$ with $i\in \{1,\dots,X_n\}$, and view $\sigma_n$ as a permutation of $\{1,\dots,X_n\}$.

\begin{proof}[Proof of Proposition \ref{prop:lkdnsel}]
Let $\E=\alpha(\E')$ be the coupling of Corollary \ref{cor:lkdn} with $\E'$ completely neutral and $\E$ the lookdown, so that for each $m$, $\alpha_m$ is uniform and independent of $\F_m$, the $\sigma$-algebra generated by $(\E_j')_{j<m}$. Let $\Xi_{n,m},\Xi_{n,m}'$ correspond to $\E,\E'$ respectively, similarly for other objects, so that $\Xi_{n,m}'$ is $\F_m$-measurable and $\Xi_{n,m}=\{\alpha_m(A)\colon A \in \Xi_{n,m}'\}$. Since $\smash{(D_{n,m}(i))_{i=1}^{\ell_{n,m}}}$ sorts $\Xi_{n,m}$ by least element, $X_{n,m}(i)>0$ for $i\le \ell_{n,m}$, and conditioned on $\F_m$ it follows from Algorithm \ref{alg:sbo2} that $\smash{(\alpha_m^{-1}(D_{n,m}(i)))_{i=1}^{\ell_{n,m}}}$ is a size-biased ordering of $\Xi_{n,m}'$ and correspondingly that $\smash{(X_{n,m}(i))_{i=1}^{\ell_{n,m}}}$ is in size-biased order. By definition of $\ell_{n,m}$ and since $X_{n,m}(i)>0$ for $i\le \ell_{n,m}$, $X_{n,m}(i)=0$ for $i>\ell_{n,m}$, which implies the full vector $X_{n,m}$ is in size-biased order.\\

It remains to show the vector of limiting frequencies $(x((n,i)))_{i=1}^{X_n}$ in the ordered lookdown is in size-biased order, for which it suffices that $x(n,i)>0$ for $i\le \ell_n^*$ and $(x((n,i)))_{i=1}^{\ell_n^*}$ is in size-biased order. Now, $x_{n,m}'$ is $\F_m$-measurable, and by tracing ancestors, $\alpha_n$ is determined by $\F_m$ and $\alpha_m$. Let's estimate the conditional distribution of $\alpha_n$ given $\F_m$, using $\alpha_m$. From what we showed above, $(x_{n,m}(i))_{i=1}^{\ell_{n,m}}$ is in size-biased order, moreover, $x_{n,m}(i) = x_{n,m}'(\alpha_n(i))$. In particular, for any distinct $j_1,\dots,j_{\ell_n^*}$ for which $x_{n,m}'(j_i)>0$ for all $i\le \ell_n^*$,
\[\pr(\alpha_n(i)=j_i \ \forall i\le \ell_n^*\mid \F_m) = \prod_{i=1}^{\ell_n^*}\frac{x_{n,m}'(j_i)}{\sum_{a=i}^\ell x_{n,m}'(j_a)}.\]
Let $\F=\bigvee_m \F_m$. If, moreover, $x_{n,m}'(j_i)>0$ for all $m,i$, then from a.s.~convergence of frequencies (see Lemma \ref{lem:xmart}) and the bounded convergence theorem, letting $m\to\infty$ above,
\[\pr(\alpha_n(i)=j_i \ \forall i\le \ell_n^*\mid \F) = \prod_{i=1}^{\ell_n^*}\frac{x'(n,j_i)}{\sum_{a=i}^\ell x'(n,j_a)}.\]
The above is non-zero iff $x'(n,j_i)=0$ for all $i\le \ell_n^*$. Since there are $\ell_n^*$ values of $j$ for which $x'(n,j)>0$, it follows that $x((n,i))>0$ a.s.~for all $i\le \ell_n^*$, and from the formula that $(x((n,i)))_{i=1}^{\ell_n^*}$ is in size-biased order.
\end{proof}

\section{Takeover}\label{sec:dom}

In this section we prove Lemmas \ref{lem:coalprob}-\ref{lem:pnm_freq} and Theorem \ref{thm:tkover}. Recall that $s_n$ is the probability that a uniform random pair in $V_{n+1}$ has the same parent in $V_n$ and that $t_\infty=\sum_n s_n$ is the total time elapsed on the coalescent time scale.

\begin{proof}[Proof of Lemma \ref{lem:coalprob}]
We begin by deducing the last two statements, then prove the formula for $p_{n,m}$. For the second statement just note that $0\le s_j\le 1$ for all $j$. For the third statement, if $s_j=1$ for some $j\ge n$ then clearly $p_{n,m}=0$ for $m>j$, if $s_j \ge 1/2$ infinitely often then $\sum_j s_j=\infty$ and for any $n$, $\prod_{j=n}^\infty (1-s_j) \le 2^{-\infty}=0$, otherwise $0\le s_j<1/2$ eventually, and since for $0\le x<1/2$, $e^{-2x} \le 1-x\le e^{-x}$, $\prod_{j=n}^\infty (1-s_j)=0$ iff $\sum_j s_j=\infty$.\\

It remains to prove the formula for $p_{n,m}$. Since $p_{n,m}$ is invariant under permutation of edges, we may assume that $\E$ is completely neutral (see Definition \ref{def:unif}). For $j\le m$ let $a_j(v)$ denote the ancestor of $v$ in $V_j$, then for arbitrary $v,w \in V_m$, $p_{n,m}=1-\pr(a_n(v)\ne a_n(w))$. The event $a_j(v) \ne a_j(w)$ is determined by $\Xi_{j+1},\dots,\Xi_m$, of which $\Xi_j$ is independent. Since $\Xi_j$ is also uniform, it follows that
$$\pr(a_{j-1}(v) \ne a_{j-1}(w) \mid a_j(v) \ne a_j(w)) = 1-s_{j-1}.$$
Since $a_j(v)=a_j(w)$ implies $a_{j-1}(v)=a_{j-1}(v)$,
$$\pr(a_n(v) \ne a_n(w)) = \prod_{j=n}^{m-1} \pr(a_j(v) \ne a_j(w) \mid a_{j+1}(v) \ne a_{j+1}(w))$$ and the result follows.
\end{proof}

Our next task is to prove Lemma \ref{lem:pnm_freq}, for which the following concept will be useful. It is familiar in coalescent theory, see for example equation (30) in \cite{bercoal}.

\begin{definition}
Let $A$ be a finite set and $P$ a partition of $A$. Define the \emph{concentration} of $P$ by
\begin{align}\label{eq:conc}
c(P) = \pr(v\sim w)
\end{align}
where $(v,w)$ is a pair of elements of $A$ chosen uniformly at random without replacement and $v\sim w$ if $v,w$ are in the same element of $P$. 
\end{definition}
For example, $s_n=c(\Xi_n)$ and $p_{n,m}=c(\Xi_{n,m})$ for each $n,m$. Notice that $c(P)=0 \Lrarrow P = \{\{x\}\colon x \in A\}$ and $c(P)=1 \Lrarrow  P=\{A\}$ which justifies the name. We easily compute
\begin{align}\label{eq:concform}
c(P)=\dfrac{1}{\binom{|A|}{2}}\sum_{B\in P} \binom{|B|}{2} = \frac{\sum_{B\in P}|B|(|B|-1)}{|A|(|A|-1)}
\end{align}
which can be interpreted in the following useful way. If $B_I$ is a size-biased sample from $P$ then $P(I=i)=|B_i|/|A|$. Comparing to \eqref{eq:concform}, we see that
$$c(P)=\ev\left[ \, \frac{|B_I|-1}{|A|-1}\,\right],$$
and rearranging,
\begin{align}\label{eq:cPbias}
\ev[\,|B_I|\,] = 1 + (|A|-1)c(P).
\end{align}
Equation \eqref{eq:cPbias} is the main ingredient needed to complete the proof of Theorem \ref{thm:tkover}. However, we will also need to treat a special case, using the following lower bound on concentration.

\begin{lemma}\label{lem:clbnd}
Let $P$ be a partition of a finite set $A$ with $|A|\ge 2$. Then
$$c(P) \ge \frac{1/|P|-1/|A|}{1-1/|A|}.$$
\end{lemma}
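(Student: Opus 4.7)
The plan is to apply the formula \eqref{eq:concform} together with a standard convexity (Cauchy--Schwarz) bound on $\sum_{B \in P} |B|^2$.

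Write $n = |A|$ and $k = |P|$, and let $n_1, \dots, n_k$ be the block sizes, so $\sum_i n_i = n$. From \eqref{eq:concform},
$$c(P) = \frac{\sum_i n_i(n_i-1)}{n(n-1)} = \frac{\sum_i n_i^2 - n}{n(n-1)}.$$
By Cauchy--Schwarz (equivalently, convexity of $x \mapsto x^2$ applied to the uniform measure on $\{n_1,\dots,n_k\}$),
$$\sum_i n_i^2 \ge \frac{(\sum_i n_i)^2}{k} = \frac{n^2}{k},$$
with equality iff all blocks have equal size. Plugging in,
$$c(P) \ge \frac{n^2/k - n}{n(n-1)} = \frac{n/k - 1}{n-1} = \frac{1/k - 1/n}{1 - 1/n},$$
which is the claimed inequality.

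There is really no obstacle here --- the bound is tight precisely when $P$ partitions $A$ into blocks of equal size, matching the intuition that, among partitions with $k$ blocks of a set of size $n$, concentration is minimized by the ``flat'' partition.
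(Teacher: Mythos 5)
Your proof is correct. The paper proves the same bound by writing $c(P)=\ev\bigl[(|B_I|-1)/(|A|-1)\bigr]$ for a size-biased sample $B_I$ from $P$ and then invoking the fact that a size-biased random variable stochastically dominates its unbiased counterpart, whose mean is $|A|/|P|$; your route instead applies Cauchy--Schwarz directly to $\sum_B |B|^2$ in the formula \eqref{eq:concform}. These are really two phrasings of the same inequality, since $\ev[|B_I|]=\sum_B |B|^2/|A|\ge |A|/|P|$ is exactly your convexity step, but your version is more self-contained (it needs no external reference to size-bias dominance), while the paper's version fits the surrounding narrative, which leans on size-biased sampling throughout, and reuses the identity \eqref{eq:cPbias} that is needed elsewhere. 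Your remark about the equality case (all blocks of equal size) is a small bonus not present in the paper.
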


\begin{proof}
Let $n=|P|$ and $N=|A|$. Order $P$ as $B_1,\dots,B_n$ and recall from just above \eqref{eq:cPbias} that
$$c(P)=\ev\left[ \, \frac{|B_I|-1}{|A|-1}\,\right],$$
where $B_I$ is a size-biased sample from $P$, i.e., $P(I=i)=|B_i|/N$. Let $J$ be uniform on $\{1,\dots,n\}$ so that $B_J$ is a uniform (unbiased) sample from $P$. Size-biased random variables stochastically dominate their unbiased counterparts (Section 2.2.4 in \cite{sbsurv}). Since
$\ev[\, B_J \,] = \sum_i |B_i|P(J=j) = \sum_i |B_i|/n = N/n$, $\ev[\, B_I\, ] \ge N/n$, so using the above display,
$$c(P) \ge \frac{N/n-1}{N-1} = \frac{1/n-1/N}{1-1/N}.$$
\end{proof}

The special case that requires  treatment is the following.

\begin{lemma}\label{lem:smallpop}
If $\max_i k_n(i)>1$ then $s_n\ge 1/X_n^2$. In particular, if $\tau=\infty$, $k_n \ne (1,\dots,1)$ for all $n$ and $\liminf_n X_n<\infty$ then $t_\infty=\infty$.
\end{lemma}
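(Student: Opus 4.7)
I would split the lemma into its two assertions and handle them separately.

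For the inequality, the plan is to use \eqref{eq:snform} to rewrite $s_n\ge 1/(X_n(X_{n+1}-1))$ as the equivalent statement $\sum_i k_n(i)(k_n(i)-1)\ge X_{n+1}/X_n$, and then argue by cases on the sign of $X_{n+1}-X_n$. If $X_{n+1}\le X_n$, the right side is at most $1$, and the hypothesis $\max_i k_n(i)\ge 2$ supplies a single term of size at least $2$ in the sum. If instead $X_{n+1}>X_n$, so $X_{n+1}\ge X_n+1$ since both are integers, Cauchy--Schwarz gives $\sum_i k_n(i)^2\ge X_{n+1}^2/X_n$, whence $\sum_i k_n(i)(k_n(i)-1)\ge X_{n+1}(X_{n+1}-X_n)/X_n\ge X_{n+1}/X_n$. (The hypothesis on $\max_i k_n(i)$ is needed only in the first case.)

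For the divergence of $t_\infty$, let $M:=\liminf_n X_n<\infty$, fix $n_0$ so that $X_n\ge M$ for all $n\ge n_0$, and consider the infinite set $N:=\{n\ge n_0\colon X_n=M\}$, which will provide the divergent contribution. The first step is to upgrade the hypothesis $k_n\ne(1,\ldots,1)$ along $N$: if every $k_n(i)\le 1$ for some $n\in N$, then because $k_n\ne(1,\ldots,1)$ at least one entry is $0$, forcing $X_{n+1}<M$ and contradicting $X_{n+1}\ge M$. Hence $\max_i k_n(i)\ge 2$ for every $n\in N$.

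The main obstacle is that the direct bound $s_n\ge 1/(M(X_{n+1}-1))$ from the first part decays if $X_{n+1}$ is large along $N$, and a single large litter from an $M$-sized generation is perfectly consistent with $X_n=M$. To rule this out I would split into two sub-cases according to the size of $X_{n+1}$. If $X_{n+1}\le 2M$, the cruder estimate $s_n\ge 2/(X_{n+1}(X_{n+1}-1))$ (one litter of size at least two) already gives a positive constant depending only on $M$. If $X_{n+1}>2M$, apply the Cauchy--Schwarz step from the first part with $X_n=M$ to obtain $s_n\ge (X_{n+1}-M)/(M(X_{n+1}-1))$, which exceeds $1/(2M)$ since $X_{n+1}-M\ge X_{n+1}/2$. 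Either way, $s_n\ge c_M$ for some positive constant $c_M$ uniform over $n\in N$, so $t_\infty=\sum_n s_n\ge \sum_{n\in N} c_M=\infty$.
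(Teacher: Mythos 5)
Your proof is correct, and it takes a genuinely different route from the paper's. For the first inequality, the paper does not work from \eqref{eq:snform} at all: it applies its concentration lower bound (Lemma \ref{lem:clbnd}) to the partition $\Xi_n$, using $|\Xi_n|=\ell_n\le X_n$ together with $X_{n+1}-\ell_n\ge 1$ when some litter has size at least $2$; the written proof actually lands on the bound $s_n\ge 1/\ell_n^2\ge 1/X_n^2$ rather than the literal $1/(X_n(X_{n+1}-1))$ of the statement. Your derivation — reducing to $\sum_i k_n(i)(k_n(i)-1)\ge X_{n+1}/X_n$ and splitting on the sign of $X_{n+1}-X_n$, with Cauchy--Schwarz handling the growing case — proves exactly the stated bound and is entirely elementary. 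This choice has a downstream cost: because your bound degrades when $X_{n+1}$ is large, your second part needs the extra sub-case analysis on the size of $X_{n+1}$ (which you carry out correctly, getting $s_n\ge 1/(2M)$ via Cauchy--Schwarz when $X_{n+1}>2M$), whereas the paper's bound $1/X_m^2$ is insensitive to $X_{m+1}$ and makes that step immediate. The two proofs also locate the ``good'' generations slightly differently: you upgrade $k_n\ne(1,\dots,1)$ to $\max_i k_n(i)\ge 2$ directly at generations where $X_n$ equals the liminf, while the paper finds such a generation within a window of length $C$ after each visit to level $C$; both arguments are valid.
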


\begin{proof}
Recall that $\Xi_n$ partitions $V_{n+1}$ and that $\ell_n=|\Xi_n|$ is the number of vertices in $V_n$ with at least one child. In particular, $\ell_n \le X_n$ and if $\max_i k_n(i)>1$ then
$$X_{n+1}-\ell_n  = \sum_{i=1}^{X_n} (k_n(i) - \1(k_n(i)>0))\ge 1.$$
If $a$ is fixed then $b\mapsto (a-b)/(1-b)$ is decreasing on $(-\infty,\min(1,a)]$. Since $s_n=c(\Xi_n)$, using Lemma \ref{lem:clbnd} with $P=\Xi_n$ and $A=V_n$, $|P|=\ell_n$ and $|A|=X_{n+1}$ so
\[s_n = \frac{1/\ell_n-1/X_{n+1}}{1-1/X_{n+1}} \ge \frac{1/\ell_n - 1/(\ell_n+1)}{1-1/(\ell_n+1)} = \frac{1}{\ell_n^2} \ge \frac{1}{X_n^2}.\]
For the second statement, suppose $C:=\liminf_n X_n<\infty$. If $\max_i k_n(i)\le 1$, the assumption on $k_n$ implies $\min_i k_n(i)=0$ and thus $X_{n+1}<X_n$. If $\tau=\infty$ then $X_m\ge 1$ for all $m$, so if $X_n \le C$ it follows that $\max_i k_m(i)>1$ for some $n\le m <n+C$. Let $m$ be the least such value, then $X_m\le C$ and using the first statement, $s_m \ge 1/X_m^2 \ge 1/C^2$. Since $X_n\le C$ infinitely often, the result follows.
\end{proof}






We can now prove Lemma \ref{lem:pnm_freq}, which connects the limiting frequency on the base path to the coalescence probability $p_{n,\infty}$ from Lemma \ref{lem:coalprob}.

\begin{proof}[Proof of Lemma \ref{lem:pnm_freq}]
Combining Proposition \ref{prop:lkdnsel} with \eqref{eq:cPbias}, for each $n<m$,
$$\ev[X_m(u_n)] = 1 + (X_m-1)p_{n,m}$$
and dividing by $X_m$,
\begin{align}\label{eq:esbfreq}
\ev[x_m(u_n)] = 1/X_m + (1-1/X_m)p_{n,m}.
\end{align}
By Lemma \ref{lem:xmart}, $\ev[x_m(u_n)]\to \ev[x(u_n)]$ as $m\to\infty$ and by Lemma \ref{lem:coalprob}, $p_{n,\infty}= \lim_{m\to\infty} p_{m,n}$ exists, and is equal to $1$ if $t_\infty=\infty$. Since $\tau=\infty$ by assumption, $X_m\ge 1$ for all $m$, so $1/X_m$ is bounded. If $1/X_m \to 0$ then taking $m\to\infty$ in \eqref{eq:esbfreq} gives \eqref{eq:esbasymfreq} in that case. If instead $\limsup_m 1/X_m>0$, then Lemma \ref{lem:smallpop} implies $t_\infty=\infty$, so $p_{n,\infty}=1$, and the RHS of \eqref{eq:esbfreq} is $(1-p_{n,m})/X_m+p_{n,m}$. As $m\to\infty$, $p_{n,m} \to 1$ and $1/X_m$ is bounded, so the RHS of \eqref{eq:esbfreq} tends to $1=p_{n,\infty}$.
\end{proof}

\begin{proof}[Proof of Theorem \ref{thm:tkover}]

If $t_\infty=\infty$ then $p_{n,\infty}=1$ and by \eqref{eq:esbasymfreq}, $\ev[x(u_n)]=1$, and since $x(u_n)\in[0,1]$, $x(u_n)=1$ a.s. If $t_\infty<\infty$ then $\sum_{m\ge n}s_m\to 0$ as $n\to\infty$, and \eqref{eq:coalprob} implies $p_{n,\infty} \to 0$ as $n\to\infty$. Notice that if $x_I$ is a size-biased sample from $(x_i)_{i=1}^\ell$ as in Definition \ref{def:sb} then
$$\ev[x_I] = \sum_i x_iP(I=i) = \frac{\sum_i x_i^2}{\sum_j x_j}.$$
From Proposition \ref{prop:lkdnsel}, $x(u_n)$ is a size-biased sample from $(x(v))_{v \in X_n}$, and $\sum_{v \in V_n}x(v)=1$, so
$$p_{n,\infty}=\ev[x(u_n)] = \ev[\sum_{v \in V_n}x(v)^2] \ge \ev[\max_{v \in V_n}x(v)^2],$$
and $p_{n,\infty}\to 0$ implies $\max_{v \in V_n}x(v) \to 0$ in probability as $n\to\infty$.
\end{proof}

\section{Asynchronous models}

Here we prove Theorems \ref{thm:asynfs} and \ref{thm:tovar} (in reverse order, since \ref{thm:asynfs} makes use of \ref{thm:tovar}).

\begin{proof}[Proof of Theorem \ref{thm:tovar}]
We first prove a general form of (i). If $X_n>0$ and $X_{n+1}\ge \alpha X_n$ with $\alpha>1$ then $\Delta_n \ge \log \alpha>0$ and $ b_n-1=X_{n+1}-X_n\ge X_{n+1}(1-1/\alpha)$ so
\[s_n \ge \frac{(b_n-1)^2}{X_{n+1}^2} \ge (1-1/\alpha)^2>0.\]
So, if $X_{n+1}\ge \alpha X_n$ infinitely often then $\sum_n \Delta_n^2=\infty$ and $t_\infty=\infty$. \\

Next, we show (ii): suppose $X_n>1$ and $X_{n+1} \le 2X_n$, and let $a_n=\max(0,(X_{n+1}/X_n-1)^2)$. By concavity $x\log 2 \le \log(1+x) \le x$ for $x \in [0,1]$, which implies $a_n (\log 2)^2 \le \Delta_n \le a_n$. If $b_n\in\{0,1\}$ then $s_n=0$ and $X_{n+1}\le X_n$ so $\Delta_n=0$. Otherwise, $b_n\ge 2$ and $X_{n+1}>X_n\ge 2$ so $b_n-1\ge b_n/2$ and $X_{n+1}-1\ge X_{n+1}/2$, implying the right-hand inequality in
\[\frac{(b_n-1)^2}{X_{n+1}^2} \le s_n \le \frac{2(b_n-1)^2}{X_{n+1}^2/2}\]
(the left-hand one is obvious). Moreover, $b_n-1=X_{n+1}-X_n$ so $X_{n+1}\ge X_n$, and by assumption $X_{n+1}\le 2X_n$. Substituting for $b_n-1$ and using both inequalities yields
\[\frac{(X_{n+1}-X_n)^2}{4X_n^2} \le s_n \le \frac{4(X_{n+1}-X_n)^2}{X_n^2}.\]
The left- and right-hand sides are $a_n/4$ and $4a_n$, respectively. Combining with the inequalities between $a_n$ and $\Delta_n$ gives (ii). It remains to show that if $X_n>1$ eventually then $t_\infty=\infty$ iff $\sum_n \Delta_n^2=\infty$. If $X_{n+1}\ge 2X_n$ infinitely often then $s_n\ge 1/4$ and $\Delta_n\ge \log 2$ infinitely often and so $t_\infty=\sum_n \Delta_n^2=\infty$. Otherwise, let $n_0$ be such that $X_n\ge 2$ and $X_{n+1} \le 2X_n$ for all $n\ge n_0$. Then $\sum_{n<n_0}s_n$ and $\sum_{n<n_0}\Delta_n^2$ are finite, and from (ii), $\sum_{n\ge n_0}s_n=\infty$ iff $\sum_{n\ge n_0}\Delta_n^2=\infty$.
\end{proof}

\begin{proof}[Proof of Theorem \ref{thm:asynfs}]
\nid\tbf{Case (i).} Let $A:=\{n\colon b_n\ne 1\}$ and let $(m_n)$ enumerate $A$ in increasing order. Then by assumption $\liminf_n m_n/n>0$; denoting it $\alpha$, if $C=\limsup_n X_n/\sqrt{n}$ then $C/\alpha \ge \limsup_n X_{m_n}/\sqrt{m_n}$, so without loss of generality we may assume that $b_n\ne 1$ for all $n$. Furthermore we may assume that $X_n\to \infty$, since if $X_n\le C$ infinitely often then since $X_n>0$ and $b_n \ne 1$ for all $n$, it's easy to show that $s_n \ge 1/(C(C-1))$ infinitely often. Finally we may assume that $X_{n+1}\le 2X_n$ eventually, since otherwise $t_\infty=\infty$ by Theorem \ref{thm:tovar}. Let $n_0$ be such that $X_{n+1}\le 2X_n$ for $n\ge n_0$; we will show that
\[t_\infty \ge \sum_{n\ge n_0} \frac{1}{4X_n^2},\]
then the result follows from $X_n=O(\sqrt{n})$ and divergence of $\sum_{n\ge 1} 1/n$. Let $A_0=\{n\colon b_n=0\}$ and $A_2=\{n\colon b_n\ge 2\}=\N\setminus A_0$. If $m\in A_2$ and $m\ge n_0$, since $b_m\ge 2$ and $b_m-1=X_{m+1}-X_m$,
\[s_m \ge \frac{2(X_{m+1}-X_m)}{X_{m+1}^2} \ge \frac{X_{m+1}-X_m}{2X_m^2}.\]
For $n\in A_0$ let $M(n)=\min\{m>n\colon X_{m+1}\ge X_n\}$ and for $m\in A_2$ let $N(m)=\{n\colon M(n)=m\}$. By definition $X_m\le X_n$ for all $n\in N(m)$. Moreover, $|N(m)| \le X_{m+1}-X_m$ [add picture]. So, if $m\in A_2$ and $m\ge n_0$ then
\[s_m \ge \frac{1}{4X_m^2} + \frac{|N(m)|}{4X_m^2} \ge \frac{1}{4X_m^2} + \sum_{n\in N(m)} \frac{1}{4X_n^2}.\]
Since $M(n)$ exists for each $n\in A_0$, $\N=A_2 \cup \bigcup_{m \in A_2}N(m)$. In particular,
\[t_\infty \ge \sum_{m\in A_2, \ m\ge n_0}s_m \ge \sum_{n\ge n_0} \frac{1}{4X_n^2}.\]
\nid\tbf{Case (ii).} As above let $Y_n=\log X_n$ so that $\Delta_n=(Y_{n+1}-Y_n)_+$. If $y_0<y_n$ are fixed and $y_1,\dots,y_{n-1}$ are variable then
\[s(y) := \sum_{i=0}^{n-1} \max(0,y_{i+1}-y_i)^2\]
has the unique minimizer $y_i=y_0+(y_n-y_0)i/n$ and minimum value $(y_n-y_0)^2/n$. We'll prove this in a moment; let's first verify the desired result. If $Y_n-Y_m\ge (\ep/2)\sqrt{n}$ then $\sum_{i=m}^{n-1} \Delta_i^2 \ge (Y_n-Y_m)^2/n \ge \ep^2/4$. By assumption, for some $\ep>0$ $Y_n\ge \ep\sqrt{n}$ infinitely often. Let $n_0=0$ and recursively let $n_k=\min\{n>n_{k-1}\colon Y_n - Y_{n_{k-1}} \ge (\ep/2)\sqrt{n}\}$. Then
\[\sum_n \Delta_n^2 = \sum_{k=1}^\infty \sum_{n=n_{k-1}}^{n_k-1}\Delta_n^2 = \sum_{k=1}^\infty \ep^2/4=\infty.\]
It remains to examine $s(y)$. First, any minimizer has $y_i\le y_{i+1}$ for all $i$, shown in two steps:
\enumrom
\item if $y_i>y_n$ for some $i$ then, taking the least such $i$, $y_{i-1} \le y_n<y_i$ and $s(y)$ does not increase if $y_i,\dots,y_{n-1}$ are replaced with $y_n$, then
\item if $y_i<y_{i-1}$ for some $i$ then from step (i), $i<n$. Take the least such $i$ and let $j=\min\{k\ge i\colon y_k\ge y_{i-1}\}$; from step (i), $j$ exists and $j\le n$. Replacing $y_i,\dots,y_{j-1}$ with $y_{i-1}$ does not increase $s(y)$, and does increase the least $i$ such that $y_i<y_{i-1}$, if such an $i$ still exists.
\enumend
Repeating step (ii) at most finitely many times we obtain $y_1,\dots,y_{n-1}$ such that $y_0\le y_1\le \dots \le y_{n-1}\le y_n$, without increasing the value of $s(y)$. So, without loss of generality we may assume that $s(y)=\sum_{i=0}^{n-1} (y_{i+1}-y_i)^2$. Its gradient has entries $(\nabla s(y))_i = -2(y_{i+1}-y_i)+2(y_i-y_{i-1})$ which are zero when $y_{i+1}-y_i=y_i-y_{i-1}$ for $i=1,\dots,n-1$, yielding $y_i=y_0+(y_n-y_0)i/n$. Its Laplacian is $4(n-1)$ which is positive, ensuring the unique critical point is a minimizer.
 \end{proof}

\section{Fixation}\label{sec:fix}

We now give conditions for fixation, i.e., uniqueness of the infinite path. Recall the extinction time $\tau(v)=\inf\{m\colon D_m(v)=\emptyset\}$ of $v$, from Corollary \eqref{cor:tauord}. If $\tau=\infty$ ($X_n>0$ for all $n$) the ordered lookdown has at least one infinite path, the base path $((n,1)\colon n\ge 0)$, so fixation occurs if $\tau((n,i))<\infty$ for all $n$ and $i\ge 2$.  To show this, the basic principle that we'll use is that on the (truncated) coalescent time scale the min path (least-ranked descendant) of particles dominates a Markov chain $Y$ with $Y\to Y+1$ at rate $c (Y-1)^2$ for some $c>0$, which blows up in finite time if $Y(0)\ge 2$. To see where this comes from, consider the ordered lookdown for the Moran model. In this model, at rate 1 for each pair of positive integers $i<j$, the particle at $i$ has an offspring at $j$, and for each $k>j$ the particle at $k$ is ``pushed up'' to $k+1$, recall Figure \ref{fig:mrngrph}. Tracing ancestors backwards in time, each pair of particles gains a common ancestor at rate 1 (since this is the rate of coalescence, from any two locations), so the coalescent time scale is just the (actual) time scale $t$ of the process. If $D_t((s,i))$ denotes descendants, at time $t\ge s$, of the particle located at $i$ at time $s$, then $Y(t):=\min D_t((s,i))$ follows the path of that particle as it gets pushed up over time, if it does. A particle at $k$ gets pushed up at a total rate $\#\{(i,j)\colon 1 \le i < j\le k\}=k(k-1)/2$, so $Y\to Y+1$ at rate $Y(Y-1)/2$. If $Y(s)=i\ge 2$ then the expected time for $Y\to\infty$, which is also $\ev[\tau((s,i))-s]$, is thus
\[\sum_{j=i}^\infty \frac{2}{j(j-1)} = \frac{2}{i-1}<\infty.\]
In particular, the clade of any $(s,i)$ with $i\ge 2$ goes extinct in finite time almost surely, i.e., fixation occurs. Returning to the present context, we begin by considering asynchronous models (see Definition \ref{def:asyn}).

\begin{proof}[Proof of Theorem \ref{thm:fix}, asynchronous case]

Recall the min path $\gma_m(v)$ is defined for $m\ge n$ by $\gma_m(v) = \min D_m(v)$, with the convention $\min\emptyset=\infty$, and $\ell_n=|\Xi_n|$ is the number of individuals in generation $n$ that have offspring. Let $M(v) = \inf\{m\ge n\colon \gma_m(v) > \ell_m\}$. Then $\tau(v) = 1+M(v)$, since (i) if $m<M(v)$ then $\gma_m(v) \le \ell_m$ so some individual in $D_m(v)$ has offspring, while (ii) no individuals in $D_{M(v)}(v)$ have offspring. By assumption, $t_\infty^o=\infty$ so if $t_{M(v)}^o<\infty$ then $M(v)<\infty$ and thus $\tau(v)<\infty$, so it suffices to show $t_{M((n,i))}^o<\infty$ for all $n$ and $i\ge 2$.\\

Given $(n,i) \in V$ with $i\ge 2$, define $(Y(m))_{m\ge n}$ by $Y(m)=\gma_m((n,i))$ and let $M=M((n,i)) = \inf\{m\ge n\colon Y(m)>\ell_m\}$. Let $m(j)=M\wedge\inf\{m\ge n\colon Y(m)\ge j\}$ and $W_j = t_{m(j+1)}^o-t_{m(j)}^o$. Then, $t_M^o= \sum_{j=2}^{\infty} W_j$ so it is enough to show that $\sum_{j=2}^{\infty} \ev[\,W_j\,]<\infty$. \\

In the asynchronous case, $k_n = (b_n,1,\dots,1)$ so \eqref{eq:sntrunc} gives
$$s_m^o=\frac{2\, \1(b_m>0)}{X_{m+1}(X_{m+1}-1)}.$$
For $m$ such that $b_m\ge 2$, let $O_m$ denote the unique block of $\Xi_m$ with $b_m$ elements, which for the lookdown is a uniform $b_m$-subset of $\{1,\dots,X_{m+1}\}$. If $Y(m)>\ell_m$ then $Y(m+1)=\infty$, while if $Y(m)=j \le \ell_m$ then $Y(m+1)>Y(m)$ iff $|O_m \cap \{1,\dots,Y(m)\}| \ge 2$, so if $q_m(j) = \pr(Y(m+1)>j \mid  Y(m)=j)$ then from the distribution of $O_m$
\begin{align}\label{eq:qlb}
q_m(j) \ge \frac{j(j-1)}{X_{m+1}(X_{m+1}-1)} = j(j-1)\, s_m^o/2
\end{align}
and if $b_m<2$ then $s_m^o=0$, in which case \eqref{eq:qlb} also holds. In the lookdown, $(\E_m)$ are independent by construction, so for integer $a>0$
$$\pr(Y(a)=j \mid Y(m')=j, \ m\le m'< a) = 1-q_{a-1}(j),$$
and therefore
$$
\pr(m(j+1) > a \mid m(j)=m)  
= \prod_{m'=m}^{a-1} (1-q_{m'}(j)).
$$

Using $1-x\le e^{-x}$, then \eqref{eq:qlb}, for any $i\ge 1$
\begin{align}\label{eq:asynfix}
\prod_{m'=m}^{a-1} (1-q_{m'}(j))  &\le \exp\left(-\sum_{m'=m}^{a-1} q_{m'}(j)\right) \nonumber  \\
&\le \exp \left( -j(j-1) (t_a^o-t_m^o)/2\right).\end{align}

For each $m$, $X_{m+1} \ge \ell_m$ (each litter at time $m$ contributes at least $1$ to $X_{m+1}$), so if $Y(m) > X_{m+1}$ then $Y(m) > \ell_m$ and $M\le m$. In other words, if $m<M$ then $Y(m) \le X_{m+1}$. In particular, if $m(j) \le m < M$ then $s_m^o \le 2/(j(j-1))$. 
By definition,
\begin{align}\label{eq:asynfix1}
\pr(W_j>t_M^o-t_m^o \mid m(j)=m)=0
\end{align}
and if $m(j)=m$ and $W(j)>s$ for some $s<t_M^o-t_m^o$ then letting $a = \inf\{m'\colon t_{m'}^o-t_m^o > s\}-1$, $a<\min(m(j+1),M)$ so by the above bound on $s_m^o$,
\[t_a^o-t_m^o = t_{a+1}^o-t_m^o-s_a^o \ge s - 2/(j(j-1)).\]
Using \eqref{eq:asynfix}, we then have
\begin{align}\label{eq:asynfix2}
\pr(W_j>s \mid m(j)=m) &\le \pr(m(j+1)>a \mid m(j)=m) \nonumber \\
&\le \exp(-j(j-1)(t_a^o-t_m^o)/2) \nonumber \\
& \le  \exp(-j(j-1)s/2 + 1). 
\end{align}
Combining \eqref{eq:asynfix1} and \eqref{eq:asynfix2}, $\pr(W_j>s \mid m(j)=m) \le \exp(-j(j-1)s+1)$ for all $s>0$, so the same is true without conditioning on the value of $m(j)$. Then,
\begin{align*}
\ev[\,W_j\,] &= \int_0^{\infty} \pr(W_j>s)ds \\
& \le \int_0^{\infty} \exp(-j(j-1)s/2 +1)ds \\
& \le \frac{2e}{j(j-1)}.
\end{align*}
In particular, $\sum_{j\ge 2} \ev[\,W_j\,]<\infty$, completing the proof.
\end{proof}

To treat the general case we'll need a few auxiliary results. For the following, which uses size-biased ordering, it's convenient to think of a list $(x_1,\dots,x_\ell)$ as a multiset $S$, which is a set in which the same element can appear more than once. Then, $(y_i)_{i=1}^\ell$ is a size-biased ordering of $S$ if for each $k$ for which $S\setminus \{y_i\}_{i< k}$ has a non-zero element,  $y_k$ is a size-biased sample from $S\setminus \{y_i\}_{i<k}$.

\begin{lemma}\label{lem:sbosd}
Let $(y_i)_{i=1}^\ell$ be a size-biased ordering of a multiset $S\subset \R_+$ and let $(u_i)_{i=1}^\ell$ be a uniform random ordering of $S$. For any $c>0$, there is a coupling of $(y_i)$ and $(u_i)$ such that for all $j$, $N(j):=\#\{i\le j\colon y_i\ge c\} \ge M(j):=\#\{i\le j\colon u_i\ge c\}$.
\end{lemma}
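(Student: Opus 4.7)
The plan is to build the coupling index by index. Let $A \subseteq S$ denote the sub-multiset of elements $\geq c$, set $\tilde{x}_i := \1(x_i \geq c)$, $\tilde{u}_i := \1(u_i \geq c)$, and $D_i := \sum_{j \leq i}(\tilde{x}_j - \tilde{u}_j)$. The goal is $D_i \geq 0$ for every $i$. Since $D_\ell = 0$ (both orderings realize the same multiset), we cannot hope for pointwise domination $\tilde{x}_i \geq \tilde{u}_i$; only the partial-sum process can be ordered.

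The construction is greedy. At step $i$, let $R^x_i := S \setminus \{x_1,\dots,x_{i-1}\}$, $R^u_i := S \setminus \{u_1,\dots,u_{i-1}\}$, and let $p^x_i, p^u_i$ be the conditional probabilities that the next pick lies in $A$, under size-biased and uniform, respectively. If $D_{i-1} = 0$, couple the indicators monotonically: draw $U_i \sim \unif[0,1]$ and set $\tilde{u}_i := \1(U_i \leq p^u_i)$, $\tilde{x}_i := \1(U_i \leq p^x_i)$, which forces $\tilde{x}_i \geq \tilde{u}_i$ as soon as $p^x_i \geq p^u_i$. If $D_{i-1} \geq 1$, sample $(\tilde{x}_i, \tilde{u}_i)$ from any joint with the correct marginals. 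Given each indicator, pick the specific element $x_i$ (resp.~$u_i$) from the appropriate sub-multiset of $R^x_i$ (resp.~$R^u_i$) using its size-biased (resp.~uniform) conditional law. In either case $D_i \geq 0$: when $D_{i-1} = 0$ from $\tilde{x}_i \geq \tilde{u}_i$, and when $D_{i-1} \geq 1$ from the trivial bound $\tilde{x}_i - \tilde{u}_i \geq -1$.

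The heart of the argument is the inequality $p^x_i \geq p^u_i$ whenever $D_{i-1} = 0$. Under that condition, both $R^x_i$ and $R^u_i$ contain the same number $n$ of elements of $A$, and $|R^x_i| = |R^u_i| = r := \ell - i + 1$. Then $p^u_i = n/r$, while writing $W_A^x, W_B^x$ for the total weights of $R^x_i \cap A$ and $R^x_i \setminus A$, we have $p^x_i = W_A^x/(W_A^x + W_B^x)$. Since $W_A^x \geq nc$ and $W_B^x \leq (r-n)c$, monotonicity of $w \mapsto w/(w+t)$ gives $p^x_i \geq nc/(nc + (r-n)c) = n/r = p^u_i$ (the degenerate case $r = n$ giving $p^x_i = 1 = p^u_i$ directly). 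Iterating yields $D_i \geq 0$ for every $i$, which is the desired domination.

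The main obstacle I anticipate is verifying that the coupling preserves both marginal laws. This is routine once one treats the class indicator and the within-class element choice as independent layers of randomness: conditional on the size-biased past alone, $\tilde{x}_i$ is $\mathrm{Bernoulli}(p^x_i)$ (because $p^x_i$ is a function of $R^x_i$, not of the uniform past) and $x_i$ given $\tilde{x}_i$ is drawn from the correct size-biased conditional on the relevant part of $R^x_i$, and symmetrically for $u_i$; the joint coupling only acts at the indicator level, leaving both marginals intact.
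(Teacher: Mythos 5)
Your proposal is correct and follows essentially the same route as the paper's proof: induct over positions, couple the two chains through a common uniform only when the running counts are tied, and at a tie use the bound (weight of remaining elements $\ge c$) $\ge nc$ versus (weight of the others) $\le (r-n)c$ to get that the size-biased conditional probability of hitting $\{x \ge c\}$ dominates the uniform one. The only cosmetic difference is that you couple the class indicators directly and then resample the element within the class, while the paper couples via quantiles of the decreasingly ordered remainders; these yield the same coupling.
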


\begin{proof}
Let $n=\#\{x \in S\colon x\ge c\}$. Given $j$, suppose a coupling of $(y_i)_{i=1}^j$ and $(u_i)_{i=1}^j$ exists with $N(i)\ge M(i)$ for $i\le j$, which holds trivially for $j=0$ with $N(0)=M(0)=0$. Let $\F_j$ be the $\sigma$-algebra generated by $(y_i)_{i=1}^j,(u_i)_{i=1}^j$. To specify the distribution of $y_{j+1},u_{j+1}$ conditioned on $\F_j$, there are two cases.\\

\nid\tbf{Case 1: $N(j)>M(j)$. } In this case $N(j+1) \ge M(j+1)$, so just let $y_{j+1},u_{j+1}$ be conditionally independent given $\F_j$, with the correct marginal distributions.\\ 

\nid\tbf{Case 2: $N(j)=M(j)$. }Let $z_j=\sum_{x \in S \setminus \{y_i\}_{i\le j}} x$ and $z_j(c)=\sum_{x \in S\setminus \{y_i\}_{i\le j}} x\1(x\ge c)$, then
$$\pr(y_{j+1} \ge c \mid \F_j) = \frac{z_j(c)}{z_j}.$$
Notice that $z_j(c)\ge c(n-N(j))$ and $z_j-z_j(c) \le c(\ell-j-(n-N(j))$. In particular,
\begin{align}\label{eq:sbosd1}
\pr(y_{j+1}\ge c\mid \F_j) \ge \frac{n-N(j)}{\ell-j}\end{align}
On the other hand,
\begin{align}\label{eq:sbosd2}
\pr(u_{j+1} \ge c \mid \F_j) = \frac{n-M(j)}{\ell-j}.
\end{align}
To couple $y_{j+1}$ and $u_{j+1}$, let $U$ be uniform on $[0,1]$ independent of $\F_j$. Arrange $S\setminus \{y_i\}_{i\le j}$ and $S\setminus \{u_i\}_{i\le j}$ in decreasing order as $(a_i)_{i=1}^{\ell-j}$ and $(b_i)_{i=1}^{\ell-j}$ and let $\alpha_i=\sum_{k=1}^i a_k/z_j$ and $\beta_i=i/(\ell-j)$, with $\alpha_0=\beta_0=0$. Then, let $y_{j+1}=a_i$ if $\alpha_{i-1} \le U < a_i$ and $u_{j+1}=b_i$ if $\beta_{i-1} \le U < \beta_i$. Let $I=\max\{i\colon a_i\ge c\}$ and $J=\max\{i\colon b_i \ge c\}$. If $N(j)=M(j)$ then using \eqref{eq:sbosd1} and \eqref{eq:sbosd2},
$$\alpha_I =\pr(y_{j+1} \ge c \mid \F_j) \ge \pr(u_{j+1} \ge c \mid F_j)=\beta_J$$ which means that if $u_{j+1}\ge c$ then $y_{j+1}\ge c$, and thus $N(j+1)\ge M(j)$.
\end{proof}

The next result will help us to show that the min path $Y$ of particles (as defined in the proof of the asynchronous case) tends to $\infty$ in finite time, on the truncated coalescent time scale, in the case where $Y$ has large jumps.

\begin{lemma}\label{lem:divseq}
Let $(a_i), (s_i)_{i\in \N}$ be sequences in $\R_+\cup\{\infty\}$ and let $y_0>0$, and for $i\in \N\cup\{\infty\}$ let $y_i=y_0 + \sum_{j\le i}a_j$ and $t_i=\sum_{j \le i}s_j$, so that $t_0=0$. Let $I=\inf\{i\colon y_i=\infty\}$, with $\inf\emptyset=\infty$. Suppose there is $c>0$ such that $a_{i+1} \ge  c \,y_i^2 s_i/2$ and $s_i \le 1/y_i$ for each $i$. Then $t_I \le 2+4/c$.
\end{lemma}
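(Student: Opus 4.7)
My plan is to partition the indices $0 \le i < I$ according to the growth rate of $(y_i)$ and estimate the contribution to $t_I = \sum_{i<I} s_i$ of each part separately. Let
$$A = \{i < I : y_{i+1} \le 2 y_i\} \quad\text{and}\quad B = \{i < I : y_{i+1} > 2 y_i\}.$$
Note $(y_i)$ is non-decreasing, and $B$ automatically contains $I-1$ whenever $I < \infty$ since then $y_I = \infty > 2 y_{I-1}$.

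For $i \in A$ (slow growth), the lower bound $a_{i+1} \ge c y_i^2 s_i / 2$ rearranges to $s_i \le 2(y_{i+1}-y_i)/(c y_i^2)$, and $y_{i+1} \le 2 y_i$ gives $1/y_i^2 \le 2/(y_i y_{i+1})$, which combined yields $s_i \le (4/c)(1/y_i - 1/y_{i+1})$. Summing over $A$ (or extending to all $i < I$, since the extra terms are nonnegative) telescopes and contributes at most $(4/c)(1/y_0) = 2/c$ to $t_I$. For $i \in B$ (fast growth), I would use $s_i \le 1/y_i$ directly. Enumerating $B$ as $i_1 < i_2 < \dots$, monotonicity of $(y_i)$ together with the doubling at each $i_k$ give $y_{i_{k+1}} \ge y_{i_k+1} \ge 2 y_{i_k}$, so starting from $y_{i_1} \ge y_0 = 2$, a straightforward induction yields $y_{i_k} \ge 2^k$. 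Hence the contribution of $B$ is bounded by the geometric sum $\sum_{k\ge 1} 2^{-k} = 1$.

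Adding the two estimates gives $t_I \le 1 + 2/c$, which is stronger than (and hence implies) the claimed bound $t_I \le 2 + 4/c$. The main subtlety to be careful about is handling the case where $y$ jumps to $\infty$ in a single step; this is resolved cleanly because any such jump is a fortiori a doubling, placing that index in $B$, where the bound $s_i \le 1/y_i$ continues to apply and feeds into the geometric sum. Beyond this dichotomy, the proof is just telescoping and a geometric series; the threshold $y_{i+1} = 2 y_i$ is chosen because the slow-growth bound $(y_{i+1}-y_i)/y_i^2 \lesssim 1/y_i - 1/y_{i+1}$ becomes too weak precisely when $y_{i+1}/y_i$ is large, which is exactly the regime in which the direct bound $s_i \le 1/y_i$ is effective.
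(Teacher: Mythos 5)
Your proof is correct and in fact yields the slightly stronger bound $t_I \le 1 + 2/c$: the telescoping sum over $A$ contributes at most $(4/c)\cdot(1/y_0) = 2/c$, and the geometric series over $B$ contributes at most $1$. It is, however, organized differently from the paper's argument. The paper partitions time according to dyadic levels of the \emph{value} of $y$: setting $I_k = \inf\{i\colon y_i \ge 2^k\}$, it rewrites the hypothesis multiplicatively as $y_{i+1}/y_i \ge 1 + c\,y_i s_i/2 \ge 1 + c\,2^{k-1}s_i$ on the block $[I_k, I_{k+1})$, uses the fact that $y$ at most doubles across the block (minus its last step) to bound the elapsed $t$-time there by $2^{1-k}/c$, and controls the final step of each block by $s_i \le 1/y_i \le 2^{-k}$. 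You instead partition the \emph{index set} step by step according to whether $y$ doubles, handle the slow steps by the exact telescoping inequality $s_i \le (4/c)(1/y_i - 1/y_{i+1})$, and handle the doubling steps by $s_i \le 1/y_i$ plus a geometric series. The two dichotomies are close in spirit --- in both, $s_i \le 1/y_i$ pays for the doublings and the growth hypothesis pays for everything else --- but your telescoping estimate replaces the paper's block-wise ratio computation, avoids introducing the stopping times $I_k$, and gives a marginally better constant. One small point worth making explicit in a final write-up: your sums run over $i < I$, whereas $t_I = \sum_{j\le I}s_j$ includes $s_I$; when $I<\infty$ this costs nothing because $y_I=\infty$ forces $s_I \le 1/y_I = 0$.
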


\begin{proof}
For each $i$, $y_{i+1}/y_i \ge 1+c\, y_is_i/2$. Let $I_0=0$ and $I_k=\inf\{i\colon y_i\ge 2^k\}$, so that $I=\lim_{k\to\infty}I_k$. If $I_{k+1}=I_k$ then $t_{I_{k+1}}-t_{I_k}=0$. Otherwise, for $I_k \le i < I_{k+1}$, (i) $y_{i+1}/y_i \ge 1+2^{k-1} c \, s_i$ and (ii) $s_i \le 2^{-k}$. Using (i),
$$2 \ge \frac{y_{I_{k+1}-1}}{y_{I_k}} \ge 1 + c\,2^{k-1}(s_{I_k}+\dots+s_{I_{k+1}-2}) = 1 + c\,2^{k-1}(t_{I_{k+1}-1}-t_{I_k}),$$
so $t_{I_{k+1}-1}-t_{I_k}\le 2^{1-k}/c$. Combining with (ii),
$$t_{I_{k+1}}-t_{I_k} = s_{I_{k+1}-1} + (t_{I_{k+1}-1}-t_{I_k}) \le 2^{-k} + 2^{1-k}/c =(1+2/c)\cdot 2^{-k}.$$
Therefore $t_I = \sum_k t_{I_{k+1}}-t_{I_k} \le (1+2/c) \sum_k 2^{-k} = 2+4/c$ as desired.
\end{proof}

The next result will allow us to extract a diverging subsequence of increments on the coalescent time scale, in the case where $Y$ can have large jumps, along which $Y$ has large enough jumps that Lemma \ref{lem:divseq} can be applied.

\begin{lemma}\label{lem:binsum}
Fix $p>0$ and let $(u_i)$ be i.i.d.~with $\pr(u_i=1)=1-\pr(u_i=0)=p$, and suppose $(s_i)$ is a sequence in $\R_+$ with $\sum_i s_i=\infty$. Then $\pr(\sum_i s_iu_i=\infty)=1$.
\end{lemma}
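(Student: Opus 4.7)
The plan is to compute the Laplace transform of the partial sums $T_n := \sum_{i=1}^n s_iu_i$ and show it vanishes in the limit, which by bounded convergence will force $T_\infty := \lim_n T_n = \infty$ almost surely. Using independence of the $u_i$ and $\ev[e^{-s_iu_i}] = 1 - p(1-e^{-s_i})$, we have
\[
\ev[e^{-T_n}] \;=\; \prod_{i=1}^n \bigl(1 - p(1-e^{-s_i})\bigr) \;\le\; \exp\Bigl(-p\sum_{i=1}^n (1-e^{-s_i})\Bigr),
\]
using $1-x \le e^{-x}$ on each factor. So it is enough to verify $\sum_i (1-e^{-s_i}) = \infty$: once that is in hand, $\ev[e^{-T_n}] \to 0$, and since $T_n \uparrow T_\infty$ gives $e^{-T_n} \downarrow e^{-T_\infty}$ boundedly, dominated convergence yields $\ev[e^{-T_\infty}] = 0$, hence $e^{-T_\infty} = 0$ and $T_\infty = \infty$ almost surely.

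The divergence $\sum_i (1-e^{-s_i}) = \infty$ follows from the elementary inequality $1 - e^{-s} \ge \tfrac{1}{2}\min(s,1)$ for $s \ge 0$. This can be verified by splitting into two ranges: on $[0,1]$, the function $f(s) = 1 - e^{-s} - s/2$ satisfies $f(0)=0$, $f(1) = 1 - 1/e - 1/2 > 0$, and has a unique critical point at $s = \ln 2$ which is a maximum, so $f \ge 0$ throughout; on $[1,\infty)$, $1-e^{-s} \ge 1 - 1/e > 1/2$. From $\sum s_i = \infty$ it then follows that $\sum \min(s_i,1) = \infty$, since either infinitely many $s_i$ exceed $1$ and the sum is trivially infinite, or else $s_i < 1$ eventually, in which case $\min(s_i,1) = s_i$ from some point on and the divergence is inherited from $\sum s_i$.

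The only genuine obstacle is verifying the elementary inequality; the rest is a one-line computation using independence of $(u_i)$ together with monotone/bounded convergence. An alternative would be to invoke Kolmogorov's zero-one law, since $\{T_\infty = \infty\}$ is a tail event of $(u_i)$, and then prove positivity of its probability, but establishing positivity essentially requires the same Laplace-transform estimate, so the direct route above seems cleanest.
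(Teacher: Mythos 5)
Your proof is correct, and it takes a genuinely different route from the paper. You work with the Laplace transform of the partial sums $T_n=\sum_{i\le n}s_iu_i$: since the $u_i$ are independent, $\ev[e^{-T_n}]=\prod_{i\le n}\bigl(1-p(1-e^{-s_i})\bigr)\le\exp\bigl(-p\sum_{i\le n}(1-e^{-s_i})\bigr)$, and the elementary bound $1-e^{-s}\ge\tfrac12\min(s,1)$ converts $\sum s_i=\infty$ into $\sum(1-e^{-s_i})=\infty$, so the transform vanishes in the limit and $T_\infty=\infty$ a.s.\ by bounded convergence. The paper instead uses a combinatorial decomposition: it stratifies the indices by the triadic scale of $s_i$ into classes $I_k=\{i:3^{-(k-1)}>s_i\ge 3^{-k}\}$, isolates the classes $K$ where $|I_k|>k$ (these must carry infinite total mass), applies a Chernoff bound to the binomial counts $N_k=\#\{i\in I_k:u_i=1\}$ and the first Borel--Cantelli lemma to get $N_k\ge p|I_k|/2$ for all large $k\in K$, and sums. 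Your argument is shorter, avoids any large-deviation estimate or Borel--Cantelli, and is really a streamlined instance of the Kolmogorov three-series philosophy; the paper's argument is more hands-on and exposes the structure (that either the $u_i$ pick up infinitely many ``large'' $s_i$, or the law of large numbers kicks in on each heavy dyadic shell). For the purpose this lemma serves in the paper, either proof is entirely adequate, and yours would arguably be the cleaner choice.
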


\begin{proof}
Let $I_0=\{i\colon s_i \ge 1\}$ and for $k\ge 1$, $I_k=\{i\colon 3^{-(k-1)}> s_i \ge 3^{-k}\}$, and let $\ep_k = \sum_{i\in I_k} s_i$ so that $\infty=\sum_i s_i = \sum_k \ep_k$. If $S\subset \N$ is infinite then $\pr(\sum_{i\in S}u_i=\infty)=1$ by the second Borel-Cantelli lemma, so if $\ep_0=\infty$ then $|I_0|=\infty$ and $\sum_i s_iu_i \ge \sum_{i\in I_0} u_i =\infty$. Otherwise, $\sum_{k\ge 1}\ep_k=\infty$. Let $K=\{k\ge 1\colon |I_k| > k\}$. If $k\notin K$ then $\ep_k \le k3^{-(k-1)}$ so $\sum_{k\notin K}\ep_k<\infty$ which means $\sum_{k\in K}\ep_k=\infty$. Let $N_k=\#\{i \in I_k\colon u_i=1\}$, then $N_k \eqd \bin(|I_k|,p)$ and $(N_k)_{k\ge 1}$ are independent. Let $A_k$ be the event that $N_k < |I_k|p/2$. A standard large deviations estimate gives $\lambda\in(0,1)$ such that
$$\pr(A_k) \le \lambda^{|I_k|}.$$
If $k\in K$ then $\pr(A_k) \le \lambda^k$, so $\sum_{k\in K}\pr(A_k)\le 1/(1-\lambda)<\infty$ and the first Borel-Cantelli lemma implies there is an a.s.~finite $k_0$ such that $N_k \ge |I_k|p/2$, and thus $\sum_{i \in I_k}s_i u_i \ge 3^{-k} |I_k|p/2$, for all $k\ge k_0$. Since $\ep_k \le 3^{-(k-1)}|I_k|$, it follows that
$$\sum_i s_iu_i \ge \sum_{k \in K\colon k\ge k_0} 3^{-k}|I_k|p/2 \ge \frac{p}{6}\sum_{k \in K\colon k \ge k_0} \ep_k =\infty.$$
\end{proof}

\begin{lemma}\label{lem:pebnd}
Let $Z\ge 0$ be such that $\var(Z) \le \ev[Z]$. Then
$$\pr(Z>\ev[Z]/2) \ge (\ev[Z] \wedge 18)/36.$$

\end{lemma}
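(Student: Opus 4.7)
Write $\mu=\ev[Z]$ and $\sigma^2=\var(Z)$, so by assumption $\sigma^2\le \mu$. The idea is a standard small-mean / large-mean split, since a single inequality does not give the right dependence in both regimes: Paley--Zygmund is tight when $\mu$ is small (where the bound scales like $\mu/36$) while Chebyshev handles the regime where $\mu$ is large (where the bound saturates at $1/2$). The case $\mu=0$ is vacuous, so assume $\mu>0$.

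For $\mu\le 8$, I would apply the Paley--Zygmund inequality with threshold $\theta=1/2$:
$$\pr(Z>\mu/2)\ \ge\ \frac{1}{4}\cdot\frac{\mu^2}{\ev[Z^2]}\ =\ \frac{1}{4}\cdot\frac{\mu^2}{\mu^2+\sigma^2}.$$
Using $\sigma^2\le\mu$ the denominator is at most $\mu^2+\mu=\mu(\mu+1)$, giving
$$\pr(Z>\mu/2)\ \ge\ \frac{\mu}{4(\mu+1)}\ \ge\ \frac{\mu}{4\cdot 9}\ =\ \frac{\mu}{36}$$
since $\mu+1\le 9$. In this regime $(\mu\wedge 18)/36=\mu/36$, as required.

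For $\mu\ge 8$, I would instead apply Chebyshev's inequality together with $\sigma^2\le\mu$:
$$\pr(|Z-\mu|\ge \mu/2)\ \le\ \frac{4\sigma^2}{\mu^2}\ \le\ \frac{4}{\mu},$$
so $\pr(Z>\mu/2)\ge 1-4/\mu\ge 1/2$. Since $(\mu\wedge 18)/36\le 18/36=1/2$, this is at least $(\mu\wedge 18)/36$ throughout the range $\mu\ge 8$. Combining the two cases at the common threshold $\mu=8$ (where both bounds give $\ge 2/9=\mu/36$) yields the claim.

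The only subtlety is choosing a split point at which both halves give matching constants; the value $\mu=8$ works because it is exactly where $1-4/\mu=1/2$, and because Paley--Zygmund applied with $\theta=1/2$ and $\sigma^2\le\mu$ yields precisely $\mu/36$ at $\mu=8$. No further estimates are needed.
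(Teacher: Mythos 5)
Your proof is correct and follows essentially the same route as the paper: Chebyshev with $\var(Z)\le\ev[Z]$ for the regime $\mu\ge 8$, and Paley--Zygmund with $\theta=1/2$ for $\mu\le 8$, matched at the threshold $\mu=8$. No gaps.
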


\begin{proof}
Let $\mu=\ev[Z]$, then $\ev[Z^2]=\var(Z)+(\ev[Z])^2$ which, by assumption, is $\le \mu(1+\mu)$. Using Chebyshev's inequality,
$$\pr(Z \le \ev[Z]/2) \le \pr (|Z-\ev[Z]| > \ev[Z]/2) \le \frac{4\var(Z)}{(\ev[Z])^2} \le 4/\mu$$
which is at most $1/2$ if $\mu \ge 8$. Using the Paley-Zygmund inequality,
$$\pr(Z>\theta\,\ev[Z]) \ge (1-\theta)^2 \frac{(\ev[Z])^2}{\ev[Z^2]} \ge (1-\theta)^2\, \frac{\mu^2}{\mu(1+\mu)} = \frac{(1-\theta)^2\, \mu}{1+\mu}$$
Let $\theta=1/2$, then $\pr(Z>\ev[Z]/2) \ge \mu/(4(1+\mu))$ which is at least $\mu/36$ if $\mu\le 8$.

\end{proof}

\begin{proof}[Proof of Theorem \ref{thm:fix}, general case]
Given $(n,i)\in V$ with $i\ge 2$, define $(Y(m))_{m\ge n}$ and $M$ as in the asynchronous case. Begin by separating the coalescent time scale into two parts, one for small increments and one for large, as follows:
\begin{align*}
&s_m^s := s_m^o \1( s_m^o \le 2/(Y(m)(Y(m)-1))),\\
&s_m^\ell := s_m - s_m^s,
\end{align*}
then let $t_m^s = \sum_{i<m}s_m^s$ and $t_m^\ell = \sum_{i<m} s_m^\ell$, so that $t_m^o=t_m^s+t_m^\ell$. By assumption, $t_\infty=\infty$ so either $t_\infty^s=\infty$ or $t_\infty^\ell=\infty$.\\

Next, we use Algorithm \ref{alg:sbo1} to estimate $Y(m+1)-Y(m)$. Recall that $\Xi_m\eqd\{\sigma(A)\colon A\in \xi_m\}$ where $\xi_m$ is a fixed partition of $\{1,\dots,X_{m+1}\}$ with block sizes $k_m$ and $\sigma \in \P_{m+1}$ is uniform. Let $\xi_m(i)$ denote the block containing $i$, let $\alpha=\sigma^{-1}$ and let $I(1)=1$, $I(j)=\inf\{i>I(j-1)\colon \alpha(i)\notin \bigcup_{j'<j}\xi_m(I(j'))\}$. If $B_1,\dots,B_{\ell_m}$ orders $\Xi_m$ by least element, then $B_j=\sigma(\xi_m(I(j)))$ for each $j$. By definition of the min path, $Y(m+1)=\min B_{Y(m)}$, or equivalently, $Y(m+1)=I(Y_m)$.\\

For $N,K,n \in \N$ with $K,n \le N$, the hypergeometric distribution $\hyp(N,K,n)$ describes the number of successes when drawing $n$ objects from a population of size $N$ without replacement, in which there are $K$ objects that count as a successful draw. If $Z\eqd \hyp(N,K,n)$ then $E[Z]=nK/N$ (ignoring other draws, each draw has probability $K/N$ of being a success), and $\var[Z] \le \ev[Z]$ since the outcomes of distinct draws are negatively correlated (or check the well-known formulae). In particular, Lemma \ref{lem:pebnd} applies to hypergeometric random variables. \\

Let $(\F(i))$ be the natural filtration of $(\alpha(i))$, fix $j$, and suppose $Y(m)=j$. Since $\xi_m$ has $\ell_m$ blocks, and has $L_m$ blocks of size at least $2$, by Lemma \ref{lem:sbosd}, $N:=\#\{j' \le \lf j/2\rf\colon |\xi_m(I(j'))| \ge 2\}$ dominates $\hyp(\ell_m,L_m,\lf j/2\rf)$. Let $S=\bigcup_{i\le\lf j/2\rf} (\xi_m(I(i)) \setminus \{I(i)\})$, then $|S| \ge N$. Let $N' = \#\{j'\le I(j)\colon \alpha(j') \in S\}$, then $I(j) \ge j + N'$, so if $Y(m)=j$ then $Y(m+1)=I(j) \ge Y(m)+N'$. Conditioned on $\F(I(\lf j/2\rf))$, which determines $N$, $N'$ dominates $\hyp(X_{m+1},N,\lf j/2\rf)$. To see why, let $N_1' = \#\{j' \le I(\lf j/2\rf)\colon \alpha(j') \in S\}$ which is $\F(I(\lf j/2\rf))$-measurable, and $N_2'=\#\{j' \in I(\lf j/2\rf)+1,\dots,I(j)\colon \alpha(j') \in S\}$. Then $N' = N_1' + N_2'$ and conditioned on $\F(I(\lf j/2\rf)$, $N_2'$ is $\hyp(X_{m+1}-I(\lf j/2\rf),|S|-N_1',I(j)-I(\lf j/2\rf))$. Now, $\hyp(M,K,n)$ is
\enumrom
\item stochastically increasing in $K$ (replace non-success objects with success objects),
\item stochastically increasing in $n$ (increase number of draws),
\item stochastically decreasing in $M$ (remove non-success objects), and
\item $m+\hyp(M-m,K-m,n-m)$ is stochastically increasing in $m$ \\(replace possible successes with certain successes).
\enumend
Moreover, $I(j)-I(\lf j/2\rf) \ge \lf j/2 \rf$, $N_1' \le I(\lf j/2\rf)$ and recall $|S|\ge N$. Using $\succeq$ to denote stochastic domination, conditioned on $\F(I(\lf j \rf/2)$
\begin{align*}
N'&=N_1'+'\hyp(X_{m+1}-I(\lf j/2\rf),|S|-N_1',I(j)-I(\lf j/2\rf)) \\
& \succeq \hyp(X_{m+1}-I(\lf j/2\rf)+N_1',|S|,I(j)-I(\lf j/2\rf)+N_1') \\
& \succeq \hyp(X_{m+1}-I(\lf j/2\rf)+N_1',|S|,I(j)-I(\lf j/2\rf)+N_1') \\
& \succeq \hyp(X_{m+1},N,\lf j/2\rf +N_1')\\
& \succeq \hyp(X_{m+1},N,\lf j/2\rf). 
\end{align*}
For convenience, let
$$Z=\hyp(\ell_m,L_m,\lf j/2\rf) \ \ \text{and} \ \ Z' = \hyp(X_{m+1},\lceil \ev[Z]/2 \rceil,\lf j/2\rf),$$
so that $N$ dominates $Z$ and, conditioned on $N\ge \lceil\ev[Z]/2 \rceil$, $N'$ dominates $Z'$. \\Note that $\ev[Z]=\lf j/2\rf L_m/\ell_m$ and
$$\ev[Z'] \ge \frac{\lf j/2\rf^2 L_m}{2\ell_m X_{m+1}}.$$
If $s_m^o>0$ then at least one litter has size $2$, so $\ell_m \le X_{m+1}-1$ and, recalling \eqref{eq:sntrunc},
\begin{align}\label{eq:evZ'}
\ev[Z'] \ge \frac{\lf j/2\rf^2 L_m}{2X_{m+1}(X_{m+1}-1)} \ge \frac{(j-1)^2 s_m^o}{16}.
\end{align}

We consider separately the cases $t_\infty^s=\infty$ and $t_\infty^\ell=\infty$.\\

\nid\tbf{Case 1: $t_\infty^s=\infty$.} Here we can mostly imitate the proof of the asynchronous case. Define $m(j)$ as before and let $W_j^s = t_{m(j+1)}^s - t_{m(j)}^s$. Then $t_M^s=\sum_{j=2}^{\infty} W_j$ so if we can show that $\sum_{j=2}^{\infty}\ev[\, W_j\,]<\infty$ then $M<\infty$ a.s.~which implies fixation. Let $q_m(j) = \pr(Y(m+1)>j \ \text{and} \ s_m^s>0\mid Y(m)=j)$. Then as before,
$$\pr(m(j+1)>a \mid m(j)=m) \le \exp(-\sum_{m'=m}^{a-1}q_a(j)).$$
Moreover, by definition, $s_m^s \le 2/(Y(m)(Y(m)-1))$ so $s_m^s \le 2/(j(j-1))$ for $m(j) \le m < M$. Suppose $Y(m)=j$ and let $N,N',Z,Z'$ be as above. If $s_m^s>0$ then since $Y(m+1)\ge Y(m)+N'$,
\begin{align}\label{eq:qlbZ}
q_m(j) \ge \pr(N'>0) \ge \pr(Z\ge \ev[Z]/2)\pr(Z'>0).
\end{align}
To get a lower bound, we'll use Lemma \ref{lem:pebnd}. If $\ev[Z] \le 18$, then $\pr(Z>\ev[Z]/2)\ge \lf j/2\rf L_m/(36\ell_m)$. If $s_m^s>0$ then $L_m>0$ so $\ev[Z]>0$ and, using $\hyp(X_{m+1},1,\lf j/2\rf)$ as a stochastic lower bound on $Z'$, $\pr(Z' >0) \ge \lf j/2\rf / X_{m+1}$. Plugging into \eqref{eq:qlbZ},
$$q_m(j) \ge \frac{\lf j/2\rf^2L_m}{36\,\ell_m X_{m+1}} .$$
Since $s_m^s>0$, at least one litter has size $2$, so $\ell_m \le X_{m+1}-1$. Noting that $\lf j/2\rf \ge (j-1)/2$ and recalling that $s_m^o = 2L_m/(X_{m+1}(X_{m+1}-1))$, it follows that
$$q_m(j) \ge (j-1)^2 s_m^s/288.$$

If instead $\ev[Z]>18$, then with $Z$ as above, $\pr(Z > \ev[Z]/2) \ge 1/2$. Since $Y(m)=j$ by assumption, $m(j) \le m < M$ so $j(j-1)s_m^s \le 2$ and $(j-1)^2s_m^s/16 \le 1/8$. Using \eqref{eq:evZ'},
$$\pr(Z' >0 ) \ge (\ev[Z'] \wedge 18)/36 \ge (j-1)^2 s_m^s/288$$ 
and combining with \eqref{eq:qlbZ}, $q_m(j) \ge \pr(Z>\ev[Z]/2)\pr(Z'>0) \ge (j-1)^2 s_m^s/576$.\\

Since in all cases, $q_m(j) \ge (j-1)^2 s_m^2/576$, and since in addition $s_m^s \le 2/(j(j-1))\le 2/(j-1)^2$ for $m(j) \le m < M$, proceeding as in the asynchronous case we find that $\ev[\, W_j \,] \le C/(j-1)^2$ for some $C>0$, and conclude that $\sum_{j=2}^{\infty} \ev[\, W_j\,]<\infty$. \\

\nid\tbf{Case 2: $t_\infty^\ell=\infty$. }Here we need Lemmas \ref{lem:divseq} and \ref{lem:binsum} as well as \ref{lem:pebnd}. Let
$$\eta_m=\begin{cases} \1(Y(m+1) \ge Y(m) + (Y(m)-1)^2s_m^\ell/16) & \text{if} \ \ m<M, \\
1 & \text{if} \ \ m\ge M,\end{cases}$$
$J_m = \sum_{m'=n}^{m-1}\eta_{m'}$ and let $m(0)=n$ and $m(i)= \inf\{m\colon J_m=i\}$. If $m(i)\ge M$ then $m(i+a)=m(i)+a$, and if $m<M$, then since $m\mapsto Y(m)$ is non-decreasing, $Y(m(i+1)+1) \ge Y(m(i)+1) + (Y(m(i)+1)-1)^2 s_{m(i+1)}^\ell/16$, which we will use in a moment.\\

Let $q_m(j) = \pr(\eta_m=1\mid Y(m)=j)$. With $N,N',Z,Z'$ as above and using \eqref{eq:evZ'},
\begin{align}\label{eq:qlb3}
q_m(j) \ge \pr(N' \ge (j-1)^2 s_m^\ell/16) \ge \pr(Z > \ev[Z]/2)\pr(Z' > \ev[Z']/2).
\end{align}
By assumption, either $s_m^\ell=0$ or $s_m^\ell \ge 2/(Y(m)(Y(m)-1))$ for each $m$. \\If $Y(m)=j$ and $s_m^\ell>0$, then 
\[\frac{2L_m}{X_{m+1}(X_{m+1}-1) } = s_m^o = s_m^\ell \ge \frac{2}{j(j-1)} \ge \frac{2}{(j-1)^2}.\]
Using $\ev[Z]=\lf j/2\rf L_m/\ell_m$ and the above inequality,
\[\ev[Z] \ge \frac{(j-1) L_m}{2X_{m+1}} \ge \frac{X_{m+1}-1}{2(j-1)}.\]
If $m<M$ then $j=Y_m\le \ell_m$, and $\ell_m\le X_{m+1}$, so combining with the above we find $\ev[Z] \ge \frac{1}{2}$, so $\pr(Z>\ev[Z]/2) \ge 1/72$. Using \eqref{eq:evZ'} and the above, if $s_m^\ell>0$ then
$$\ev[Z'] \ge \frac{(j-1)^2s_m^\ell }{16} \ge \frac{1}{8},$$
and so $\pr(Z' > \ev[Z']/2) \ge 1/288$. Combining and plugging into \eqref{eq:qlb3}, if $s_m^\ell>0$ then $q_m(j) \ge p$ where $p=1/(72\cdot 288)$. \\

Let $(\F_m)$ be the natural filtration of $(\E_{m-1})$. Since $(\E_m)$ are independent, $\pr(\eta_m=1 \mid Y(m)=j, \ \F_m)=q_m(j)$, so for $m<M$
$$\pr(\eta_m=1\mid \F_m) = \ev[\, q_m(Y(m)) \mid \F_m] \ge p.$$
Since $\eta_m=1$ for $m\ge M$, it follows that $(\eta_m)_{m\ge n}$ dominates an i.i.d.~Bernoulli$(p)$ sequence $(u_m)_{m\ge n}$. Since $\sum_m s_m^\ell=\infty$ by assumption and $\{m(i)\colon i\ge 0\}=\{m\colon \eta_m=1\}$, using Lemma \ref{lem:binsum}, $\sum_i s_{m(i)}^\ell=\infty$. Let $y_i = Y(m(i)+1)-1$, $c=1/16$, $s_i=s_{m(i+1)}^\ell$, $t_i=\sum_{a\le i}s_a$ and $I = \inf\{i\colon y_i=\infty\}$. Earlier we showed that $y_{i+1} \ge y_i +  c\,y_i^2 s_i$, and for $m<M$, using $2L_m\le X_{m+1}$ and $Y(m) \le \ell_m \le X_{m+1}$,
$$s_m^\ell \le s_m^o=\frac{2L_m}{X_{m+1}(X_{m+1}-1)} \le \frac{1}{Y(m)-1},$$
so $s_i = s_{m(i+1)}^\ell \le 1/(Y(m(i+1))-1)$, and since $m(i+1)\ge m(i)+1$ and $m\mapsto Y(m)$ is non-decreasing, $1/(Y(m(i+1))-1) \le 1/(Y(m(i)+1)-1) = 1/y_i$, so $s_i \le 1/y_i$. Using Lemma \ref{lem:divseq}, $t_I<\infty$. Since $\sum_i s_i=\infty$, $I<\infty$, and $M \le m(I)<\infty$ a.s.~as desired.
\end{proof}

\section{Identifiability}\label{sec:ip}

\begin{proof}[Proof of Theorem \ref{thm:iptauinf}]
We prove \eqref{eq:baseip} first, since it is the simplest. For any $w\in V_n$, $p_m(w) = x_m'(w)$ since frequencies sum to $1$. By Lemma \ref{lem:xmart}, $x_m'(w)\to x(w)$ as $m\to\infty$ and by Lemma \ref{lem:rcm}, $p_m(w)\to \mu((n,1),w)$. Using \eqref{eq:ip},
\[\rho((n,1))=\ev[\max_{w\in V_n}x(w)].\]

Next we prove existence of $x(w,u)$. Since $\E,\E'$ are isomorphic, it can be proved using either one as a reference; we will use $\E'$. Letting $(w_i)$ range over the finitely many enumerations of $V_n$,
\[\sum_{(w_i)} \pr( r(w_i)=i \ \forall i\le X_n \mid \E')=1\]
so for some $(w_i)$, that we fix for now, $\pr( r(w_i)=i \ \forall i\le X_n \mid \E')>0$, and with $p_\ell= \pr( r(w_i)=i \ \forall i\le \ell \mid \E')$, $p_\ell \ge p_{X_n}>0$ for all $\ell\in\{1,\dots,X_n\}$. Let
\[q_m(i)=\frac{x_m'(w_i)}{\sum_{j\ge i}x_m'(w_j)} \in [0,1],\]
so that $p_\ell=\lim_{m\to\infty} \prod_{i=1}^\ell q_m(i)$ and $q(i):=\lim_{m\to\infty} q_m(i)=p_i/p_{i-1} \in (0,1]$ for each $i$. Let $a_m(i)=x_m'(w_i)$ and suppose for a given $i$ and all $i<j<k\le X_n$ that
\[q(k,j):=\lim_{m\to\infty} a_m(k)/a_m(j) \in [0,\infty).\]
Then
\[\lim_{m\to\infty}\frac{a_m(i)}{a_m(i)+\dots + a_m(X_n)} =q(i)\in (0,1]\]
so
\begin{align*}
1/q(i)&=\lim_{m\to\infty} \frac{a_m(i)+\dots +a_m(X_n)}{a_m(i)} \\
&= 1 + \lim_{m\to\infty} \frac{a_m(i+1)}{a_m(i)}\left( 1 + \frac{a_m(i+2)}{a_m(i+1)} + \dots + \frac{a_m(X_n)}{a_m(i+1)}\right)\\
&= 1 + \left(1 + q(i+2,i+1)+\dots q(X_n,i+1)\right)\lim_{m\to\infty} \frac{a_m(i+1)}{a_m(i)}
\end{align*}
and since $1/q(i)\in [1,\infty)$ and $q(k,i+1)\in[0,\infty)$ for $i+1<k\le X_n$, $\lim_{m\to\infty} a_m(i+1)/a_m(i)=:q(i+1,i) \in [0,\infty)$, from which $\lim_{m\to\infty} a_m(k)/a_m(i) =: q(k,i) = q(k,i+1)q(i+1,i)\in [0,\infty)$ for $i<k\le X_n$, completing the induction step. The base case $i=X_n-1$ holds, since it reduces to
\[1/q(X_n)=1 + \lim_{m\to\infty} \frac{a_m(X_n)}{a_m(X_n-1)}=: 1 + q(X_n,X_n-1)\]
and $1/q(X_n)\in [1,\infty)$. The existence of $x'(w,u)$ (and thus $x(w,u)$) is proved, since each $(w,u)=(w_i,w_j)$ for some $i,j$, and \[x'(w_i,w_j)=\begin{cases}
q(i,j)\in [0,\infty) & \text{if} \ i>j, \\
1/(q(j,i))\in (0,\infty] & \text{if} \ i<j.\end{cases}\]

Finally we prove \eqref{eq:tauinfip}. With $v_i=(n,i)$ let $c((w_i)_{i=1}^\ell)=1/\mu((v_i)_{i=1}^\ell,(w_i)_{i=1}^\ell)$. Lemma \ref{lem:rcm} implies $c((w_i)_{i=1}^\ell)=\lim_{m\to\infty} 1/p_m((w_i)_{i=1}^\ell$. On the other hand, \eqref{eq:rcm} gives
\[1/p_m((w_i)_{i=1}^\ell) = \prod_{i=1}^\ell \left(1 + \sum_{j=i+1}^{X_n} \frac{x_m'(w_j)}{x_m'(w_i)}\right)\]
so combining these with \eqref{eq:limrelfreq} we find that
\[c((w_i)_{i=1}^\ell)=\prod_{i=1}^\ell \left(1 + \sum_{j=i+1}^{X_n} x'(w_j,w_i)\right).\]
By \eqref{eq:ip2},
\[\rho((n,i)_{i=1}^\ell) = \ev[1/\min_{(w_i)}c((w_i)_{i=1}^\ell)],\]
the minimum taken over enumerations $(w_i)$ of $V_n$, so it suffices to show that $c((w_i)_{i=1}^\ell$ is minimized by any $(w_i)$ for which $x'(w_j,w_i)\le 1$ for $j\ge i$; of course, whether we use $x(w,u)$ or $x'(w,u)$ is immaterial since they coincide up to permutation of the entries. Since $x(u,w)=1/x(w,u)$, there is an ordering for which $x(w_j,w_i)<\infty$ for $j\ge i$, and $c((w_i)_{i=1}^\ell)<\infty$ iff this holds, so any minimizer must have this property. Switch notation back to $q(j,i)=x(w_j,w_i)$ for tidiness. It suffices to show that if $q(j,i)>1$ for some $j>i$ then the value of $c$ can be decreased by re-ordering some terms. Indeed, if this occurs then since $q(j,i)=q(j,j-1)\dots q(i+1,i)$, $q(k+1,k)>1$ for some $k$. Consider the effect of exchanging $w_k$ and $w_{k+1}$ on $c$. The only affected terms in the product are $i\in\{k,k+1\}$. Before the exchange, the product of those two terms is
\[(1 + q(k+1,k) + q(k+2,k) + \dots + q(X_n,k))(1 + q(k+2,k+1) + \dots + q(X_n,k+1))\]
and after the exchange it's
\[(1 + q(k,k+1) + q(k+2,k+1) + \dots + q(X_n,k+1))(1 + q(k+2,k) + \dots + q(X_n,k)).\]
Taking the difference, most terms cancel and the result is
\begin{align*}
&q(k+1,k)(1+q(k+2,k+1)+\dots + q(X_n,k+1)) \\
-& q(k,k+1)(1 + q(k+2,k) + \dots + q(X_n,k)) \\
= &(1-q(k,k+1))(1+q(k+2,k)+\dots + q(X_n,k)),\\
\end{align*}
using $q(\cdot,k+1)q(k+1,k)=q(\cdot,k)$. By assumption, $q(k+1,k)>1$ so $q(k,k+1)=1/q(k+1,k)<1$ and the difference is positive, meaning $c$ decreases after the exchange.
\end{proof}

\begin{proof}[Proof of Corollary \ref{cor:baseip}]

Let $u_n=(n,1)$. As noted above the statement of Corollary \ref{cor:baseip} it suffices to show that $\rho((u_n)_{n\ge 0})=\lim_{n\to\infty} \rho(u_n)$. Since, for $i\le n$, $u_i$ is the ancestor of $u_n$ in generation $i$,
\[\max_{(w_i)_{i \le n} \in V_0\times \dots\times V_n \colon w_n=w} \mu_{(u_i)_{i \le n},(w_i)_{i\le n}} = \mu_{u_n,w}\]
with the maximum achieved by taking $w_i=a(w_{i+1})$ for $i<n$. Taking $\ev[\max_{w\in V_n}(\dots)]$ of both sides we find that $\rho((u_i)_{i\le n}) = \rho(u_n)$.\\

Next, if $m<n$ then for any $(v_i)_{i\le n}$, $(w_i)_{i\le n}$,
\[\mu_{(v_i)_{i\le m},(w_i)_{i\le m}} \le \mu_{(v_i)_{i\le n},(w_i)_{i \le n}}\]
and in particular, for fixed $(v_n)_{n\ge 0}$, $\max_{(w_i)_{i\le n}}\mu_{(v_i)_{i\le n},(w_i)_{\le n}}$ is non-increasing in $n$, so converges a.s.~as $n\to\infty$. The bounded convergence theorem then implies that $\rho((v_n)_{n\ge 0}) = \lim_{n\to\infty} \rho((v_i)_{i\le n})$. Taking $v_n=u_n$ and combining with the above we obtain the desired result.
\end{proof}

\end{document}